\def\stoc{0} % 1 for stoc; 0 for normal.

\documentclass[11pt]{article}

\usepackage{hyperref}
\usepackage{url}
\usepackage{fullpage}
\usepackage{amsfonts, amsmath, amssymb}

\begin{document}

\hbadness=10000
\vbadness=10000

\setlength{\parskip}{\medskipamount}

\newtheorem{theorem}{Theorem}
\newtheorem{corollary}[theorem]{Corollary}
\newtheorem{lemma}[theorem]{Lemma}
\newtheorem{observation}[theorem]{Observation}
\newtheorem{proposition}[theorem]{Proposition}
\newtheorem{definition}[theorem]{Definition}
\newtheorem{claim}[theorem]{Claim}
\newtheorem{fact}[theorem]{Fact}
\newtheorem{assumption}[theorem]{Assumption}

\newcommand{\qed}{\rule{7pt}{7pt}}

\newenvironment{proof}{\noindent{\bf Proof}\hspace*{1em}}{\qed\bigskip}
\newenvironment{proof-sketch}{\noindent{\bf Sketch of Proof}\hspace*
{1em}}{\qed\bigskip}
\newenvironment{proof-idea}{\noindent{\bf Proof Idea}\hspace*{1em}}
{\qed\bigskip}
\newenvironment{proof-of-lemma}[1]{\noindent{\bf Proof of Lemma #1}
\hspace*{1em}}{\qed\bigskip}
\newenvironment{proof-attempt}{\noindent{\bf Proof Attempt}\hspace*
{1em}}{\qed\bigskip}
\newenvironment{proofof}[1]{\noindent{\bf Proof
of #1:}}{\qed\bigskip}
\newenvironment{remark}{\noindent{\bf Remark}\hspace*{1em}}{\bigskip}

\makeatletter
\def\fnum@figure{{\bf Figure \thefigure}}
\def\fnum@table{{\bf Table \thetable}}
\long\def\@mycaption#1[#2]#3{\addcontentsline{\csname
   ext@#1\endcsname}{#1}{\protect\numberline{\csname
   the#1\endcsname}{\ignorespaces #2}}\par
   \begingroup
     \@parboxrestore
     \small
     \@makecaption{\csname fnum@#1\endcsname}{\ignorespaces #3}\par
   \endgroup}
\def\mycaption{\refstepcounter\@captype \@dblarg{\@mycaption\@captype}}
\makeatother

\newcommand{\figcaption}[1]{\mycaption[]{#1}}
\newcommand{\tabcaption}[1]{\mycaption[]{#1}}
\newcommand{\head}[1]{\chapter[Lecture \##1]{}}
\newcommand{\mathify}[1]{\ifmmode{#1}\else\mbox{$#1$}\fi}
\newcommand{\bigO}O
\newcommand{\set}[1]{\mathify{\left\{ #1 \right\}}}
\def\half{\frac{1}{2}}

% Coding theory addenda

\newcommand{\enc}{{\sf Enc}}
\newcommand{\dec}{{\sf Dec}}
\newcommand{\E}{{\rm Exp}}
\newcommand{\Var}{{\rm Var}}
\newcommand{\Z}{{\mathbb Z}}
\newcommand{\F}{{\mathbb F}}
\newcommand{\K}{{\mathbb K}}
\newcommand{\N}{{\mathbb N}}
\newcommand{\integers}{{\mathbb Z}^{\geq 0}}
\newcommand{\R}{{\mathbb R}}
\newcommand{\Q}{{\cal Q}}
\newcommand{\eqdef}{{\stackrel{\rm def}{=}}}
\newcommand{\from}{{\leftarrow}}
\newcommand{\vol}{{\rm Vol}}
\newcommand{\poly}{{\rm {poly}}}
\newcommand{\ip}[1]{{\langle #1 \rangle}}
\newcommand{\wt}{{\rm {wt}}}
\renewcommand{\vec}[1]{{\mathbf #1}}
\newcommand{\mspan}{{\rm span}}
\newcommand{\rs}{{\rm RS}}
\newcommand{\RM}{{\rm RM}}
\newcommand{\Had}{{\rm Had}}
\newcommand{\calc}{{\cal C}}

\newcommand{\fig}[4]{
         \begin{figure}
         \setlength{\epsfysize}{#2}
         \vspace{3mm}
         \centerline{\epsfbox{#4}}
         \caption{#3} \label{#1}
         \end{figure}
         }

\newcommand{\ord}{{\rm ord}}
\def\blfootnote{\xdef\@thefnmark{}\@footnotetext}
\providecommand{\norm}[1]{\lVert #1 \rVert}
\newcommand{\embed}{{\rm Embed}}
\newcommand{\qembed}{\mbox{$q$-Embed}}
\newcommand{\calh}{{\cal H}}
\newcommand{\lp}{{\rm LP}}
\setlength{\parindent}{0in}

\newcommand{\Rej}{{\mathsf{Rej}}}

\newcommand{\rmdn}{{\RM(d,n)}}
\newcommand{\calf}{{\cal F}}
\newcommand{\calp}{{\cal P}}
\newcommand{\bad}{\mbox{\sc Bad}}
\newcommand{\polylog}{{\rm polylog}}

\newcommand{\takklr}{T_{\rm AKKLR}}
\newcommand{\tgowers}{T_{\rm GN}}
\newcommand{\tgowersk}{T_{{\rm GN}(k)}}
\newcommand{\tstar}{\tgowers}
\newcommand{\deltamin}{\delta_{\min}}
\newcommand{\ignore}[1]{}

\title{Optimal Testing of Reed-Muller Codes}
\author{%
Arnab Bhattacharyya\thanks{Computer Science and Artificial
  Intelligence Laboratory, MIT, {\tt abhatt@mit.edu}.  Work partially
  supported by a DOE Computational Science Graduate Fellowship and NSF
Awards 0514771, 0728645, and 0732334.}
\and Swastik Kopparty\thanks{Computer Science and Artificial
  Intelligence Laboratory, MIT, {\tt swastik@mit.edu}. Work was partially done while author was a summer intern at Microsoft Research New England and partially supported by NSF Grant CCF-0829672.}
\and Grant Schoenebeck\thanks{Department of Computer Science, University of California-Berkeley, {\tt grant@cs.berkeley.edu}.  Work was partially done while author was a summer intern at Microsoft Research New England and partially supported by a National Science Foundation Graduate Fellowship.}
\and Madhu Sudan\thanks{Microsoft Research, One Memorial Drive, Cambridge, MA 02142, USA, {\tt madhu@mit.edu}.}
\and David Zuckerman\thanks{Computer Science Department,
University of Texas at Austin, {\tt diz@cs.utexas.edu}.
Work was partially done while the author consulted at Microsoft Research
New England, and partially supported by NSF Grants CCF-0634811
and CCF-0916160.}}

\maketitle

\begin{abstract}
We consider the problem of testing if a given function
$f : \F_2^n \rightarrow \F_2$ is close to any degree $d$ polynomial
in $n$ variables, also known as the Reed-Muller testing problem.
The Gowers norm is based on a natural $2^{d+1}$-query test for this property.
Alon et al.~\cite{AKKLR} rediscovered this test and showed that it accepts
every degree $d$ polynomial with probability $1$, while it rejects
functions that are $\Omega(1)$-far with probability
$\Omega(1/(d 2^{d}))$.
We give an asymptotically optimal analysis of this test, and show
that it rejects functions that are (even only) $\Omega(2^{-d})$-far
with $\Omega(1)$-probability (so the rejection probability
is a universal constant independent of $d$ and $n$).
This implies a tight relationship between
the $(d+1)^{\rm{st}}$-Gowers norm of a function and its maximal
correlation with degree $d$ polynomials, when the correlation is close to 1.

%The Reed-Muller testing problem has also been implicitly considered
%in the literature on additive number theory, under the label of the
%phrase ``Gowers norm'' of a function. The ``inverse conjecture''
%that functions that are rejected by the test of Alon et al. with
%probability $1/2 - \epsilon$ are $1/2 + \epsilon'(\epsilon,d)$-close
%to some degree $d$ polynomial. In view of the counterexamples to this
%conjecture~\cite{???}, our result gives potentially best possible
%between the Gowers norm and the correlation of a function to the set of
%degree $d$ polynomials.

Our proof works by induction on $n$ and yields a new analysis of
even the
classical Blum-Luby-Rubinfeld~\cite{BLR} linearity test, for the
setting of functions mapping $\F_2^n$ to
$\F_2$. The optimality follows from a tighter analysis of
counterexamples to the ``inverse conjecture
for the Gowers norm'' constructed by \cite{GT07,LMS}.

Our result has several implications.
First, it shows that the Gowers norm test is tolerant, in that
it also accepts close codewords.
Second, it improves the
parameters of an XOR lemma for polynomials given by Viola and
Wigderson~\cite{VW}.
Third, it implies a ``query hierarchy''
result for property testing of affine-invariant properties.  That is, for every
function $q(n)$, it gives an affine-invariant property that is testable
with $O(q(n))$-queries, but not with $o(q(n))$-queries, complementing
an analogous  result of \cite{GKNR08} for graph properties.
\end{abstract}

\newpage

\ifnum\stoc=1
\setcounter{page}1
\fi

\section{Introduction}

Can the proximity of a function to a low-degree polynomial be estimated by sampling the function in few places?
Variants of this question have been studied in two different communities for different purposes.

\subsection{Gowers norm}

In the additive combinatorics community,
this issue arose in Gowers' notable improvement of Szemer\'edi's theorem, that any subset of the
integers with positive density has infinitely long arithmetic progressions.  To make his advance,
Gowers introduced his uniformity norms, now usually called Gowers norms.  The motivation for these
norms is that if a function $f$ has degree~$d$, then its derivative in direction $a$, $f(x+a)-f(x)$,
has degree at most $d-1$.  Hence the $(d+1)$-fold derivative is 0.  Thus, a natural test to decide
if a function $f$ has degree $d$ is to set $k=d+1$, evaluate the $k$-fold derivative of $f$ in $k$
random directions, and accept only if the derivative evaluates to 0.  This is what we call the
$k^{\rm{th}}$ Gowers norm test, $\tgowersk$, for $k=d+1$.

Our paper focuses on the field $\F_2$ of two elements, and we now restrict to this case.
The $k^{\rm{th}}$ Gowers norm of $f:\F_2 \to \F_2$, denoted $\| f \|_{U^k}$, is
given by the expression
$$ \| f \|_{U^k} \eqdef (\Pr[\hbox{$\tgowersk$ accepts}]  -
\Pr[\hbox{$\tgowersk$ rejects}])^{\frac{1}{2^k}}.$$

Gowers~\cite{Gow01} (see also \cite{GT05}) showed that the
correlation of $f$
to the closest degree $d$ polynomial is at most
$\| f \|_{U^{d+1}}$.
The well-known Inverse Conjecture for the Gowers Norm states that
some sort of converse holds:
if $\| f \|_{U^{d+1}} = \Omega(1)$, then the correlation of $f$ to
some degree $d$ polynomial is
$\Omega(1)$.
Lovett et al.\ \cite{LMS} and Green and Tao \cite{GT07} disproved this
conjecture as stated, but a modification of the conjecture remains open,
and was recently proven in high characteristic \cite{TZ,GTZ09a,GTZ10}.
These conjectures and the Gowers norms have been extremely influential.
For example, Green and Tao \cite{GT07} used the Gowers norms over the integers to prove that
the primes contain arbitrarily long arithmetic progressions.

Study of the Gowers norms over $\F_2$ has led to impressive results in theoretical computer science.
Samorodnitsky and Trevisan \cite{SamT} used Gowers norms to obtain very strong PCPs for Unique-Games-hard languages.
This implied that Maximum Independent Set in graphs of maximum degree~$\Delta$ could not be approximated within
$\Delta/\polylog(\Delta)$ under the Unique Games Conjecture.
Using Gowers norms, Bogdanov and Viola \cite{BogV} gave a pseudorandom generator fooling low-degree polynomials over $\F_2$.
They could only prove their result under the inverse conjecture for the Gowers norm, but later Lovett \cite{Lov} and Viola \cite{Vio} used related ideas to prove an unconditional result.
Finally, Viola and Wigderson \cite{VW} used Gowers norms to prove ``XOR'' lemmas for correlation to
low-degree polynomials and to low communication protocols.

\subsection{Local testing of Reed-Muller codes}

Traditionally the Gowers norm is used in what Green and Tao call the 1\% setting, where the correlation of a function to its closest low-degree polynomial is non-negligible but small.
The 99\% setting, where the correlation is close to 1, was addressed by Alon, Kaufman, Krivelevich, Litsyn, and Ron~\cite{AKKLR},
and is the focus of our work.
More precisely, Alon et al.\ considered the question
of testing if a Boolean function $f:\F_2^n \to \F_2$,
given by an oracle, is close to
a degree~$d$ multivariate polynomial.
They rediscovered a variation of the Gowers norm test, where all the derivative directions are
linearly independent, and showed that this test suffices
for that setting.
Thus, their analysis gave the only known relationship between the Gowers norm and the proximity to low-degree polynomials in the 99\% setting.

However, their analysis was not optimal.  In this work,
we give an improved, asymptotically optimal, analysis
of the Gowers norm test.  This gives a tight connection with the Gowers norm in the 99\% setting.
% We now describe the problem, its context, our results and some implications.
Before we elaborate, let us introduce our framework.

Our question is also called testing of Reed-Muller codes, which are codes based on low-degree polynomials.
The Reed-Muller codes are parameterized by two
parameters: $n$, the number of variables, and $d$, the
degree parameter.
The Reed-Muller codes consist of all functions
from $\F_2^n \to \F_2$ that are evaluations of polynomials
of degree at most $d$.
We use $\rmdn$ to denote this class, i.e.,
$\rmdn = \{f:\F_2^n\to\F_2 | \deg(f) \leq d\}$.

The proximity of functions is measured by the
(fractional Hamming) distance.
Specifically, for functions $f,g: \F_2^n\to\F_2$, we let
the {\em distance} between them, denoted by $\delta(f,g)$,
be the quantity
$\Pr_{x \gets_U \F_2^n} [f(x) \ne g(x)]$.
For a family of functions $\calf \subseteq \{g : \F_2^n \to \F_2\}$
let $\delta(f,\calf) = \min\{\delta(f,g) | g \in \calf\}$.
We say $f$ is $\delta$-close to $\calf$ if $\delta(f,\calf)
\leq \delta$ and $\delta$-far otherwise.

Let $\delta_d(f) = \delta(f,\rmdn)$ denote the distance of
$f$ to the class of degree $d$ polynomials.
The goal of Reed-Muller testing is to ``test'', with
``few queries'' of $f$, whether $f \in \rmdn$ or $f$ is far from
$\rmdn$.
Specifically, for a function $q:\Z^+ \times \Z^+ \times (0,1] \to \Z^+$,
a {\em $q$-query tester} for the class $\rmdn$ is
a randomized oracle algorithm $T$ that, given oracle access to some
function $f : \F_2^n \to \F_2$ and a proximity parameter $\delta \in
(0,1]$, queries at most
$q = q(d,n,\delta)$ values of $f$ and accepts $f \in \rmdn$ with
probability
$1$, while if $\delta(f,\rmdn)\geq \delta$ it rejects with probability
at least, say, $2/3$.
The function $q$ is the {\em query complexity} of the test and the
main goal
here is to minimize $q$, as a function possibly of
$d$, $n$ and $\delta$.  We denote the test $T$
run using oracle access to the function $f$ by $T^f$.

As mentioned earlier,
Alon et al.~\cite{AKKLR}
gave a tester with query complexity
$O(\frac d\delta \cdot 4^d )$.
Their tester consists of repetitions of a basic test, which we denote $\tgowers$.  $\tgowers$ is a
modification of the Gowers norm test $T_{{\rm
    GN}(d+1)}$ so that the $(d+1)$-fold derivatives are evaluated in $d+1$ random {\em linearly
  independent} directions.
% Lets not use the flat terminology in the introduction since
% it is not as common as we think
%Let a \emph{$k$-flat} be an affine subspace
%of $\F_2^n$ of dimension $k$.
This modified tester, whose rejection probability differs from that of the original Gowers norm
tester by only a constant factor,
can be described alternatively as follows.
Given oracle access
to $f$, $\tgowers$~selects a random $(d+1)$-dimensional affine subspace
$A$, and accepts if $f$ restricted
to $A$ is a degree $d$ polynomial. This requires $2^{d+1}$ queries
of $f$ (since that is the number of points contained in $A$).
Alon et al.\ show that if $\delta(f) \geq \delta$ then $\tgowers$
rejects $f$ with probability $\Omega(\delta/(d \cdot 2^{d}))$.
Their final tester then simply repeated $\tgowers$
$O(\frac d\delta \cdot 2^d)$ times and accepted if all invocations
of $\tgowers$ accepted.
The important
feature of this result is that the number of queries is independent
of $n$, the dimension of the ambient
space.
Alon et al.\ also show that any tester for $\rmdn$ must make at least
$\Omega(2^d+1/\delta)$ queries. Thus their result was tight to within
almost quadratic
factors, but left a gap open. We close this gap in this work.

\subsection{Main Result}

Our main result is an optimal analysis of the Gowers norm test, up to constants.
We show that if
$\delta_d(f) \geq 0.1$, in fact even if it's at least $0.1 \cdot 2^{-d}$,
then in fact the Gowers norm test rejects with probability lower bounded
by some {\em absolute constant}.
We now formally state our main theorem.
\begin{theorem}
\label{thm:main}
There exists a constant $\epsilon_1 > 0$ such
that for all $d, n$,
and for all functions $f:\F_2^n \to \F_2$,
we have\footnote{For a tester $T$ and a function $f$, the notation $T^f$ indicates the execution of
  $T$ with oracle access to $f$.} $$ \Pr[ \tgowers^f \mbox{ rejects}] \geq \min\{2^{d}\cdot \delta_d
(f), \epsilon_1\}.$$
\end{theorem}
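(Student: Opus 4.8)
The plan is to prove Theorem~\ref{thm:main} by induction on $n$, following the structure suggested by the abstract. The base case is $n = d+1$ (or thereabouts): when the ambient dimension equals the subspace dimension, $\tgowers$ simply queries all of $\F_2^n$ and checks membership in $\rmdn$ exactly, so it rejects with probability $1$ whenever $f \notin \rmdn$, and $2^d \delta_d(f) \geq 1$ in that regime (the minimum distance of $\RM(d,d+1)$ is $2$, giving $\delta_d(f) \geq 2^{-(d+1)}$ — actually $\geq 2 \cdot 2^{-(d+1)} = 2^{-d}$ when nonzero), so the bound holds. For the inductive step, assume the theorem for $n-1$ and prove it for $n$. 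The idea is to decompose $\F_2^n = \F_2^{n-1} \times \F_2$, writing $f(x, 0) = f_0(x)$ and $f(x,1) = f_1(x)$, and relate the behavior of $\tgowers$ on $f$ to its behavior on $f_0$, $f_1$, and the ``linearity-test-like'' object $f_0 + f_1$.

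\textbf{The key reduction} is to relate a random $(d+1)$-dimensional affine subspace $A$ of $\F_2^n$ to structures living in $\F_2^{n-1}$. Conditioned on how $A$ intersects the two hyperplanes $\F_2^{n-1} \times \{0\}$ and $\F_2^{n-1} \times \{1\}$, there are essentially two cases: either $A$ lies inside one of the hyperplanes (a lower-dimensional case handled by the inductive hypothesis applied to $f_0$ or $f_1$ — but here $A$ has dimension $d+1$ and we'd need the test at dimension $d+1$ inside $\F_2^{n-1}$, which is exactly $\tgowers$ again), or $A$ meets both hyperplanes in parallel affine subspaces $A_0, A_1$ of dimension $d$, with $A_1$ a translate of $A_0$. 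In the latter case, $f|_A \in \RM(d, d+1)$ iff a certain combination holds: $f_0|_{A_0}$ and $f_1|_{A_1}$ must each be ``degree $d$ on a $d$-flat'' — which is automatic — and the real constraint couples them. Concretely, identifying $A_0$ with $\F_2^d$, $f|_A$ has degree $\leq d$ iff the single top coefficient condition holds, which amounts to: $\sum_{x \in A_0} f_0(x) + \sum_{x \in A_1} f_1(x) = 0$, i.e., a BLR-type / Gowers-type condition relating $f_0$ and a shift of $f_1$. This is why the argument simultaneously reproves BLR: when $d = 0$, $f_0, f_1 : \F_2^{n-1} \to \F_2$, and the condition $f_0(x) + f_1(y) = 0$ for random $x,y$ in a random $1$-flat is precisely the linearity test on the pair.

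\textbf{The main technical step} will be to set up the right inductive quantity and handle the coupling term. Rather than tracking only $\Pr[\tgowers^f \text{ rejects}]$, I expect we need an auxiliary statement — perhaps bounding $\delta_d(f)$ from above in terms of rejection probabilities of related tests, or introducing a ``self-corrected'' version $g$ of $f$ and arguing that if $\tgowers$ rejects $f$ rarely then $g \in \rmdn$ and $\delta(f, g)$ is small. The natural approach: define $g(x) = \mathrm{maj}_A f|_A^{\mathrm{corrected}}(x)$ via plurality vote over subspaces through $x$, show (a) if rejection probability $\eta$ is small then $g$ is well-defined and $\delta(f,g) = O(\eta/2^d)$ using the pairwise-independence-flavored union bound over subspaces, and (b) $g \in \rmdn$, using the inductive hypothesis to propagate low-degree-ness from the $(n-1)$-dimensional restrictions. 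Step (a) is where the factor $2^d$ in the theorem statement comes from — a random $(d+1)$-flat through a fixed point $x$ hits any other fixed point with probability $\approx 2^{d+1}/2^n$, and a careful second-moment / Cauchy--Schwarz argument (as in standard local-testing proofs, but sharpened) must avoid losing the extra $d$ and $2^d$ factors that plagued \cite{AKKLR}.

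\textbf{The hard part} will be making the induction close \emph{without loss of constants} — i.e., ensuring the absolute constant $\epsilon_1$ does not degrade by a multiplicative factor at each of the $n$ levels of recursion. This requires that the inductive step be ``lossless'': when $2^d \delta_d(f) < \epsilon_1$, the rejection probability of $\tgowers^f$ must be bounded below by something at least as large as what the inductive hypothesis gives for the restrictions, \emph{plus} a contribution from the coupling/linearity term, with the two combining additively rather than one swamping the other. I anticipate the crux is a clean inequality of the form $\Pr[\tgowers^f \text{ rej}] \gtrsim \frac{1}{2}\big(\Pr[\tgowers^{f_0}\text{ rej}] + \Pr[\tgowers^{f_1}\text{ rej}]\big) + (\text{coupling rejection})$, together with a matching lower bound $2^d \delta_d(f) \leq$ the same right-hand side, so that the induction is driven by comparing like-with-like. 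Controlling the coupling term in the regime where $f_0, f_1$ are individually close to degree-$d$ polynomials $g_0, g_1$ — and showing the test then detects $g_0 \neq g_1 + (\text{low-order shift})$ with the right probability — is essentially a robust analysis of the classical linearity test, which is the technical heart reused from (and improving on) \cite{BLR}.
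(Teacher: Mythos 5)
There is a genuine gap at exactly the point you yourself flag as ``the hard part.'' Your inductive step is never actually carried out: the crux inequality $\Pr[\tgowers^f \mbox{ rej}] \gtrsim \frac12(\Pr[\tgowers^{f_0}\mbox{ rej}]+\Pr[\tgowers^{f_1}\mbox{ rej}]) + (\mbox{coupling})$ is only ``anticipated,'' and the fallback you propose for the dangerous regime --- defining a self-corrected $g$ by plurality vote over flats through $x$ and then ``working hard'' to show $g \in \rmdn$ --- is precisely the self-correction paradigm of \cite{AKKLR}, which is where the $\mathrm{poly}(d)\cdot 2^d$ losses come from; asserting that a ``sharpened second-moment argument'' will avoid them is the theorem, not a proof of it. You also cannot run the induction directly on $\tgowers=T_{d,d+1}$ with a fixed constant $\epsilon_1$: with only one coordinate-pair of hyperplanes per level there is no slack, and a single ``bad'' level (where one of $f_0,f_1$ is close to a low-degree polynomial while $f$ is still far) destroys the constant. (Minor: your base-case claim $\delta_d(f)\geq 2^{-d}$ for $f\notin\RM(d,d+1)$ is false --- $f$ can differ from a codeword in one point, giving $2^{-(d+1)}$ --- though the base case survives since the test rejects with probability $1$ there.)

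The paper's mechanism, which your proposal is missing, is different in three ways. First, the induction (Lemma~\ref{lem:induction}) is run not on $\tgowers$ but on the $k$-flat test with $k=d+O(1)$, with the strengthened inductive bound $\Rej_{d,k}(f)\geq \epsilon_0+\gamma\cdot 2^d/2^n$; the shrinking extra term $\gamma 2^d/2^n$ is exactly what lets one average over \emph{all} $\approx 2^{n+1}$ hyperplanes and still afford up to $K=O(2^d)$ hyperplanes $A$ on which $f|_A$ is close to a degree-$d$ polynomial. Second, in the case $K>\gamma 2^d$ the paper does the opposite of self-correction: it \emph{sews} the hyperplane polynomials $P_A$ into a single polynomial $P$ that has degree $\leq d$ by construction (pick $\ell>d$ linearly independent hyperplanes, move them to $x_i=0$, match monomial coefficients using agreement on pairwise intersections via the RM minimum distance, then a pairwise-independence/Chebyshev covering argument gives $\delta(f,P)\leq \frac32\alpha+9/K$, Lemma~\ref{lem:patch}); this bound is small enough that the ``close'' regime (Lemma~\ref{lem:pw}, a one-disagreement-point argument) already forces constant rejection probability, closing the induction losslessly. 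Third, a separate lemma (Lemma~\ref{lem:ktod}, via the fact that a degree-$>d$ function on $\F_2^{k+1}$ restricts to degree $>d$ on at least half the hyperplanes) transfers the constant from the $k$-flat test down to $\tgowers$ at a cost of only $2^{-(k-d-1)}$. Your hyperplane-slicing intuition and the observation that the $d=0$ case recovers a BLR-type analysis are consistent with the paper's outlook, but without the strengthened $k$-flat induction and the degree-preserving sewing lemma the argument as proposed does not close.
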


Therefore, to reject
functions $\delta$-far from $\rmdn$ with constant probability, a tester
can repeat the test $\tgowers$
at most $O(1/\min\{2^{d}\delta_d(f), \epsilon_1\}) = O(1+\frac{1}{2^d
\delta})$ times,
making the total query complexity $O(2^d + 1/\delta)$.  This query
complexity is asymptotically tight in view of
the earlier mentioned lower bound in \cite{AKKLR}.

Our error-analysis is also asymptotically tight. Note that our theorem
effectively states that functions that are accepted by $\tgowers$ with
constant probability (close to 1) are (very highly) correlated
with degree $d$ polynomials. To get a qualitative improvement
one could hope that every function that is accepted by $\tgowers$
with probability strictly greater than half is somewhat correlated
with a degree d polynomial.
Such stronger statements however are effectively ruled out by
the counterexamples to the ``inverse conjecture for the Gowers norm''
given by \cite{LMS,GT07}.
Since the analysis given in these works does not match our parameters
asymptotically, we show (see Theorem~\ref{thm:counter}
in Appendix~\ref{app}) how an early analysis due to the authors of
\cite{LMS} can be used to show the asymptotic tightness of
the parameters of Theorem~\ref{thm:main}.

Our analysis of the Gowers norm test implies
a tight relationship between the Gowers norm and distance to degree~$d$
in the 99\% setting.
In particular, we show the following theorem.
% (Theorem \ref{thm:tightgowers}), there exists $\epsilon>0$ such that
% if $\| f \|_{U^{d+1}} \geq 1-\epsilon/2^d$, then $\delta_d(f) = \Theta
% (1-\| f \|_{U^{d+1}})$.

\begin{theorem}\label{thm:tightgowers}
There exists $\epsilon>0$ such that
if $\| f \|_{U^{d+1}} \geq 1-\epsilon/2^d$, then $\delta_d(f) = \Theta
(1-\| f \|_{U^{d+1}})$.
\end{theorem}

For comparison, the best previous lower bound comes from the Alon et al.\ work,
whose
result can be
interpreted as showing that there exists $\epsilon > 0$ such that
if $\| f \|_{U^{d+1}} \geq 1-\epsilon/4^d$, then $\delta_d(f) = O(4^d
(1-\| f \|_{U^{d+1}}))$.

Before explaining our technique, we describe some applications of our result.

\subsection{Tolerant testing of RM codes}

Parnas, Ron, and Rubinfeld \cite{PRR} introduced the notion of tolerant testing, and
Guruswami and Rudra \cite{GR} studied this in the coding theoretic setting.
Standard testers are required to reject strings that are far from codewords, but are not required to accept strings that are close
to codewords.  A tolerant tester is required to accept close codewords.  In particular, for a code with minimum (relative) distance $\deltamin$,
there exists constants $c_1$ and $c_2$ such that the test must accept strings within distance $c_1 \deltamin$
with probability at least $2/3$ (called the acceptance condition),
and reject strings that are at least $(c_2 \deltamin)$-far with probability at least $2/3$ (called the rejection condition).

Any tester which satisfies the rejection condition must make at least $\Omega(1/\deltamin)$ queries.  We observe that a tester that satisfies the rejection condition and makes $C/\deltamin$ queries for a constant $C$ is also tolerant.  This follows because a string with distance $\deltamin/(3C)$ will be rejected with probability at most $1/3$.  It even suffices to have the rejection condition with a constant probability (instead of $2/3$), because the test can be repeated a constant number of times to boost the probability to $2/3$.

In particular,
for Reed-Muller codes $\deltamin = 2^{-d}$, so the Gowers norm test is also tolerant.
No tolerant tester for binary Reed-Muller codes appears to have been known.

\begin{theorem}
$\tgowers$ is a tolerant tester for $\rmdn$.
\end{theorem}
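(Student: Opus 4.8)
The plan is to read off both defining conditions of a tolerant tester directly from Theorem~\ref{thm:main}, using that $\rmdn$ has minimum relative distance $\deltamin = 2^{-d}$. The tester will be the one obtained by running $\tgowers$ a constant number $t$ of times and accepting iff every invocation accepts; its query complexity is $t\cdot 2^{d+1} = O(2^d) = O(1/\deltamin)$, so it falls exactly under the regime of the observation made just before the theorem. I would first fix the rejection constant $c_2$, then derive $t$ from Theorem~\ref{thm:main}, and only then choose the (much smaller) acceptance constant $c_1$.

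\emph{Rejection condition.} Suppose $\delta_d(f) \ge c_2\deltamin = c_2 2^{-d}$; any fixed constant $c_2 > 0$ (say $c_2 = 1$) is fine. Theorem~\ref{thm:main} gives that one run of $\tgowers$ rejects $f$ with probability at least $\min\{2^d\cdot c_2 2^{-d},\,\epsilon_1\} = \min\{c_2,\epsilon_1\} =: \rho$, an absolute constant. Hence among $t$ independent runs some run rejects with probability at least $1-(1-\rho)^t \ge 1 - e^{-\rho t}$, and any constant $t \ge 2/\rho$ makes this at least $2/3$. This is the only step that uses the full strength of Theorem~\ref{thm:main}: with the weaker AKKLR bound $\Omega(\delta_d(f)/(d2^d))$ the single-run rejection probability at distance $\Theta(2^{-d})$ would be only $\Theta(1/(d4^d))$, forcing $\poly(d)\cdot 4^d$ repetitions and a query count far above $O(1/\deltamin)$, which is exactly why no tolerant tester for binary Reed-Muller codes was previously known.

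\emph{Acceptance condition.} Suppose $\delta_d(f) \le c_1\deltamin = c_1 2^{-d}$, and let $g\in\rmdn$ achieve $\delta(f,g) = \delta_d(f)$, with disagreement set $D = \{x : f(x)\ne g(x)\}$, so $|D|/2^n \le c_1 2^{-d}$. A single run of $\tgowers$ queries $f$ only on the $2^{d+1}$ points of a random $(d+1)$-dimensional affine subspace $A$, and by symmetry of the construction (the base point of $A$ is uniform and independent of the linearly independent directions) each of these $2^{d+1}$ points is marginally uniform in $\F_2^n$; a union bound gives $\Pr[A\cap D \ne \emptyset] \le 2^{d+1}\cdot c_1 2^{-d} = 2c_1$. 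Whenever $A\cap D = \emptyset$ we have $f|_A = g|_A$, which has degree at most $d$, so $\tgowers$ accepts; thus one run rejects with probability at most $2c_1$, and all $t$ runs accept with probability at least $(1-2c_1)^t \ge 1 - 2c_1 t$. Choosing the constant $c_1 \le \min\{1/(6t),\, c_2/2\}$ (legitimate since $t$ is already fixed) makes this at least $2/3$ and also yields $c_1 < c_2$; in particular every $f \in \rmdn$, for which $D = \emptyset$, is accepted with probability $1$.

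Beyond this the argument is bookkeeping; the substantive content is entirely that Theorem~\ref{thm:main}, unlike the earlier analysis, supplies a rejection probability that is an absolute constant already at distance $\Theta(\deltamin)$ from the code. The only point needing a little care in the write-up is the marginal-uniformity claim for the $2^{d+1}$ query points of $\tgowers$ --- imposing linear independence on the directions does not disturb uniformity of the individual points because the base point is drawn uniformly and independently of them.
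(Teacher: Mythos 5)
Your proposal is correct and follows essentially the same route as the paper: the paper's proof is exactly the observation that a tester with perfect completeness, query complexity $O(1/\deltamin)=O(2^d)$, and the constant-probability rejection guarantee of Theorem~\ref{thm:main} at distance $\Theta(2^{-d})$ is automatically tolerant, with acceptance of close words following from a union bound over the (marginally uniform) query points and constant repetition boosting the rejection probability to $2/3$. You have merely written out the constants and the uniformity of the flat's points explicitly, which matches the paper's sketch.
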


\iffalse
\subsection{Gowers norm in the 99\% setting}
% \label{sec:gowintro}

Recall that the $(d+1)$st Gowers norm upper bounds the
correlation of $f$
to the closest degree $d$ polynomial, which in our notation equals
$1-2\delta_d(f)$.  A lower bound in the 1\% setting is the content of the inverse conjecture for
the Gowers norm.
In the 99\% setting, the best previous lower bound comes from the Alon et al.\ work,
whose
result can be
interpreted as showing that there exists $\epsilon > 0$ such that
if $\| f \|_{U^{d+1}} \geq 1-\epsilon/4^d$, then $\delta_d(f) = O(4^d
(1-\| f \|_{U^{d+1}}))$.

Our results show that when the Gowers norm is close to 1, there is actually
a tight relationship between the Gowers norm and distance to degree~$d$.
More precisely:
% (Theorem \ref{thm:tightgowers}), there exists $\epsilon>0$ such that
% if $\| f \|_{U^{d+1}} \geq 1-\epsilon/2^d$, then $\delta_d(f) = \Theta
% (1-\| f \|_{U^{d+1}})$.

\begin{theorem}\label{thm:tightgowers}
There exists $\epsilon>0$ such that
if $\| f \|_{U^{d+1}} \geq 1-\epsilon/2^d$, then $\delta_d(f) = \Theta
(1-\| f \|_{U^{d+1}})$.
\end{theorem}
\fi

\subsection{XOR lemma for low-degree polynomials}

As mentioned earlier, Viola and Wigderson \cite{VW} used
the Gowers norm and the Alon et al.\ analysis
to give
an elegant ``hardness amplification'' result for low-degree
polynomials.
Let $f : \F_2^n \rightarrow \F_2$ be such that $\delta_d(f)$ is
noticeably large,
say $\geq 0.1$.  Viola and Wigderson showed how to use this $f$ to
construct a
$g : \F_2^m \rightarrow \F_2$ such that $\delta_d(g)$ is
significantly larger,
around $\frac{1}{2} - 2^{-\Omega(m)}$. In their construction,
$g=f^{\oplus t}$, the $t$-wise XOR of $f$, where $f^{\oplus t}:
(\F_2^n)^t \rightarrow \F_2$
is given by:
$$ f^{\oplus t} (x_1, \ldots, x_t) = \sum_{i = 1}^t f(x_i).$$
In particular, they showed that if $\delta_d(f) \geq 0.1$, then
$\delta_d(f^{\oplus t}) \geq 1/2 - 2^{-\Omega(t/4^d)}$.
Their proof proceeded by studying the rejection probabilities of $\tgowers$
on the functions $f$ and $f^{\oplus t}$. The analysis of the
rejection probability of $\tgowers$ given by \cite{AKKLR} was a central
ingredient in their proof.
By using our improved analysis of the rejection probability of $\tgowers$
from Theorem~\ref{thm:main} instead,
we get the following improvement.

\begin{theorem}
\label{thm:xor}
Let $\epsilon_1$ be as in Theorem~\ref{thm:main}.
Let $f : \F_2^n \rightarrow \F_2$.
Then
$$ \delta_d(f^{\oplus t}) \geq \frac{1 - (1 - 2 \min\{\epsilon_1/4, 2^
{d-2} \cdot \delta_d(f)\})^{t/2^d}}{2}.$$
In particular, if $\delta_d(f) \geq 0.1$, then $\delta_d(f^{\oplus
t}) \geq 1/2 - 2^{-\Omega(t/2^d)}.$
\end{theorem}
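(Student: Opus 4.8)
The plan is to deduce Theorem~\ref{thm:xor} from Theorem~\ref{thm:main} together with two standard facts about the Gowers norm: the bound of Gowers quoted earlier, that for every $h$ the correlation of $h$ with degree-$d$ polynomials (which over $\F_2$ equals $1-2\delta_d(h)$) is at most $\|h\|_{U^{d+1}}$, and the multiplicativity of the Gowers norm under the XOR construction, $\|f^{\oplus t}\|_{U^{d+1}}=\|f\|_{U^{d+1}}^{t}$. I would establish the latter in one line: by the paper's definition, $\|f^{\oplus t}\|_{U^{d+1}}^{2^{d+1}}=\E[\prod_{S\subseteq[d+1]}(-1)^{f^{\oplus t}(\vec x+\sum_{i\in S}\vec a_i)}]$ where $\vec x,\vec a_1,\dots,\vec a_{d+1}$ are uniform in $(\F_2^n)^t$; since $f^{\oplus t}$ acts separately on each of the $t$ coordinate blocks and these blocks are i.i.d., the expectation factors as the $t$-th power of $\|f\|_{U^{d+1}}^{2^{d+1}}$, and taking $2^{d+1}$-st roots gives the claim. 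Applying both facts to $h=f^{\oplus t}$ gives $1-2\delta_d(f^{\oplus t})\le\|f^{\oplus t}\|_{U^{d+1}}=\|f\|_{U^{d+1}}^{t}=(1-2\Pr[\tgowersk^f\text{ rejects}])^{t/2^{d+1}}$.

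The next step transfers Theorem~\ref{thm:main} from the linearly-independent test $\tgowers$ to the Gowers-norm test $\tgowersk$, i.e.\ shows $\Pr[\tgowersk^f\text{ rejects}]\ge\min\{2^d\delta_d(f),\epsilon_1\}$. The key observation is that $\tgowersk$ automatically accepts whenever its $d+1$ random directions are linearly dependent: then the $2^{d+1}$ sampled points cover each distinct point of the queried parallelepiped an even number of times, so the test value is $0$; and conditioned on the directions being independent, $\tgowersk$ coincides with $\tgowers$. Hence $\Pr[\tgowersk^h\text{ rejects}]=\Pr[\text{directions independent}]\cdot\Pr[\tgowers^h\text{ rejects}]$ for every $h$. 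I would apply this together with Theorem~\ref{thm:main} to $f$ padded with $m$ dummy variables: padding changes neither $\delta_d(f)$ nor $\delta_d(f^{\oplus t})$ nor the Gowers norm of $f$ or $f^{\oplus t}$, while $\Pr[\text{directions independent}]=\prod_{i=0}^{d}(1-2^{i-n-m})\to1$ as $m\to\infty$; letting $m\to\infty$ yields $\Pr[\tgowersk^f\text{ rejects}]\ge\min\{2^d\delta_d(f),\epsilon_1\}$ with no constant lost. (One could instead just cite the constant-factor equivalence of the two tests mentioned in the introduction and absorb the constant into the statement.)

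Combining the two steps and writing $M=\min\{2^d\delta_d(f),\epsilon_1\}$ gives $1-2\delta_d(f^{\oplus t})\le(1-2M)^{t/2^{d+1}}$. I would finish with the elementary inequality $(1-2M)^{1/2}\le 1-M/2$ (equivalent to $(1-M/2)^2\ge 1-2M$, i.e.\ $M^2/4\ge0$), which gives $(1-2M)^{t/2^{d+1}}=((1-2M)^{1/2})^{t/2^d}\le(1-M/2)^{t/2^d}$; since $M/2=2\min\{2^{d-2}\delta_d(f),\epsilon_1/4\}$, rearranging is exactly the claimed bound $\delta_d(f^{\oplus t})\ge\frac{1-(1-2\min\{\epsilon_1/4,\,2^{d-2}\delta_d(f)\})^{t/2^d}}{2}$. (The intermediate bound, with exponent $t/2^{d+1}$ and base $1-2M$, is in fact slightly stronger.) For the ``in particular'' claim, if $\delta_d(f)\ge0.1$ then $M$, and hence $\min\{\epsilon_1/4,2^{d-2}\delta_d(f)\}$, is bounded below by an absolute constant $c>0$, so the bound becomes $\frac12-\frac12(1-2c)^{t/2^d}=\frac12-2^{-\Omega(t/2^d)}$.

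There is no deep obstacle here: this theorem is essentially a corollary of Theorem~\ref{thm:main}, the correlation--Gowers-norm inequality, and multiplicativity. The only points that need care are the transfer from $\tgowers$ to $\tgowersk$ (the padding argument, or the quoted constant-factor equivalence), and keeping the closing estimates tight enough to land on precisely the stated constants rather than something weaker by a factor of two in the base or the exponent.
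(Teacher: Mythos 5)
Your proposal is correct, and it follows the same skeleton as the paper's proof: multiplicativity of the Gowers norm under XOR of functions on disjoint blocks (the paper's Proposition~\ref{prop:power}, which you reprove in one line rather than citing \cite{VW}), a correlation-versus-Gowers-norm inequality, and a transfer of Theorem~\ref{thm:main} from $\tgowers$ to the test $T_{{\rm GN}(d+1)}$. Where you differ is in the two quantitative steps, and the difference is worth recording. The paper transfers via Proposition~\ref{prop:relatetests}, losing a factor $4$ (so $\Rej^0_d(f)\ge\min\{\epsilon_1/4,2^{d-2}\delta_d(f)\}$), and then invokes Lemma~\ref{lem:gowers} in the form $1-2\delta_d(g)\le(1-2\Rej^0_d(g))^{1/2^d}$, i.e.\ correlation at most $\|g\|_{U^{d+1}}^{2}$; that is stronger than the standard bound (correlation at most $\|g\|_{U^{d+1}}$, i.e.\ exponent $1/2^{d+1}$) and, as written, already fails for $d=0$, where it would assert $|\E[(-1)^g]|\le(\E[(-1)^g])^2$. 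You instead use the standard inequality, make the test transfer lossless via padding (which is sound: padding preserves $\delta_d$ and $\Rej^0_d$, linearly dependent directions always make $T_{{\rm GN}(d+1)}$ accept, and conditioned on independence it coincides with $\tgowers$), and then spend exactly that slack through $(1-2M)^{1/2}\le 1-M/2$ with $M=\min\{2^d\delta_d(f),\epsilon_1\}$, landing precisely on the stated base $1-2\min\{\epsilon_1/4,2^{d-2}\delta_d(f)\}$ and exponent $t/2^d$. So your route is not merely an alternative: it is a derivation that yields the theorem's stated constants from the correct form of the correlation--Gowers inequality, whereas the paper's chain, with Lemma~\ref{lem:gowers} corrected to exponent $1/2^{d+1}$, would only give exponent $t/2^{d+1}$ (still enough for the ``in particular'' clause). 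Note also that your parenthetical fallback---citing the factor-$4$ equivalence of the two tests instead of padding---would not recover the exact stated bound, since the square-root step would then degrade the base to $1-2\min\{\epsilon_1/16,2^{d-4}\delta_d(f)\}$; the lossless transfer (or some equivalent) is what makes the constants come out right.
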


\subsection{Query hierarchy for affine-invariant properties}

Our result falls naturally in the general framework
of property testing~\cite{BLR,RS,GGR}.
Goldreich et al.~\cite{GKNR08} asked an
interesting question in this broad framework:
Given an
ensemble of properties $\calf = \{\calf_N\}_N$ where
$\calf_N$ is a property of functions on domains of size $N$,
which functions correspond to the query complexity of some property?
That is, for a given complexity function $q(N)$, is there
a corresponding property $\calf$ such that $\Theta(q(N))$-queries
are necessary and sufficient for testing membership in
$\calf_N$? This question is interesting even when we
restrict the class of properties being considered.

For completely general properties this question is
easy to solve. For graph properties \cite{GKNR08} et al.
show that for every efficiently
computable function $q(N) = O(N)$ there is a graph property
for which $\Theta(q(N))$ queries are necessary and sufficient
(on graphs on $\Omega(\sqrt{N})$ vertices).
Thus this gives a ``hierarchy theorem'' for query complexity.

Our main theorem settles the analogous question in the
setting of ``affine-invariant'' properties. Given a
field $\F$, a property $\calf \subseteq \{\F^n \to \F\}$
is said to be affine-invariant if for every $f \in \calf$
and affine map $A:\F^n \to \F^n$, the composition of
$f$ with $A$, i.e, the function
$f \circ A(x) = f(A(x))$, is also in $\calf$.
Affine-invariant properties seem to be the algebraic analog of
graph-theoretic properties and generalize most natural
algebraic properties (see Kaufman and Sudan~\cite{KS}).

Since the Reed-Muller codes form an affine-invariant family,
and since we have a tight analysis for their query complexity,
we can get the affine-invariant version of the result of
\cite{GKNR08}.
Specifically, given
any (reasonable) query complexity function $q(N)$
consider $N$ that is a power of two and
consider the class of functions on $n = \log_2 N$
variables of degree at most $d = \lceil \log_2 q(N) \rceil$.
We have that membership in this family requires
$\Omega(2^d) = \Omega(q(N))$-queries, and on the
other hand $O(2^d) = O(q(N))$-queries also suffice, giving
an ensemble of properties $\calp_N$ (one for every $N = 2^n$)
that is testable with $\Theta(q(N))$-queries.

\begin{theorem}
\label{thm:hier}
For every $q : \N \to \N$ that is at most linear, there is an
affine-invariant property
that is testable with $O(q(n))$ queries (with one-sided error) but is
not testable in
$o(q(n))$ queries (even with two-sided error).  Namely, this property
is membership
in $\RM({\lceil\log_2 q(n)\rceil, n})$.
\end{theorem}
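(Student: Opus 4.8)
The plan is to combine the upper bound coming from Theorem~\ref{thm:main} with the query lower bound of Alon et al.~\cite{AKKLR}, instantiated at the specific degree $d = \lceil \log_2 q(n) \rceil$. Fix such a $q$, and for each $n$ let $d = d(n) = \lceil \log_2 q(n) \rceil$, so that $2^{d} = \Theta(q(n))$ (here we use that $q(n) \geq 1$; for the degenerate values where $q(n)$ is a small constant one checks the statement directly, since $O(1)$ queries trivially suffice and are trivially necessary for a nonempty property). The property in question is $\calf_n = \RM(d(n), n)$, which is affine-invariant: if $f$ has degree at most $d$ and $A : \F_2^n \to \F_2^n$ is affine, then $f \circ A$ again has degree at most $d$, since composition with an affine map cannot raise degree. (We must also check $d(n) \le n$ so that the property is nontrivial and the Reed-Muller bounds apply; when $q(n)$ is large enough that $\lceil \log_2 q(n) \rceil > n$ one instead takes $d = n$, i.e.\ the all-functions property, which is testable with $0$ queries and for which the claim is vacuous or trivial — since $q$ is at most linear this regime is a non-issue asymptotically.)

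First I would establish the upper bound. By Theorem~\ref{thm:main}, a single run of $\tgowers$ uses $2^{d+1}$ queries and, when $\delta_d(f) \geq \delta$, rejects with probability at least $\min\{2^{d}\delta, \epsilon_1\}$. As observed in the discussion following Theorem~\ref{thm:main}, repeating $\tgowers$ a total of $O(1 + 1/(2^{d}\delta))$ times and rejecting if any copy rejects yields a tester with the required one-sided error and total query complexity $O(2^{d} + 1/\delta)$. Taking $\delta$ to be any fixed constant (the property-testing convention here treats $\delta$ as a constant when measuring dependence on $n$), this is $O(2^{d}) = O(q(n))$ queries, with perfect completeness. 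That gives the positive direction.

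Second I would establish the lower bound. Alon et al.~\cite{AKKLR} show that any (even two-sided error) tester for $\rmdn$ must make $\Omega(2^{d} + 1/\delta)$ queries; with $\delta$ constant this is $\Omega(2^{d}) = \Omega(q(n))$. Hence no tester for $\calf_n = \RM(d(n),n)$ can use $o(q(n))$ queries, which is exactly the claimed separation. The only mild subtlety is making sure the lower bound of \cite{AKKLR} is stated for the range of $d$ we are using (namely $d$ growing with $n$, up to roughly $\log_2 n$); this is within their regime, and the construction there — based on distinguishing a random degree-$d$ polynomial from a random degree-$(d{+}1)$ polynomial, or a direct counting/information argument — applies. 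The main obstacle, such as it is, is purely bookkeeping: one must handle the edge cases where $q(n)$ forces $d(n)$ outside the interesting range $\{1, \dots, n\}$, and make the rounding $2^{\lceil \log_2 q(n)\rceil} = \Theta(q(n))$ explicit; there is no substantive difficulty once Theorems~\ref{thm:main} and the \cite{AKKLR} lower bound are in hand.
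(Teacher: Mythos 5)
Your proposal is correct and follows essentially the same route as the paper, whose ``proof'' is just the paragraph preceding the theorem: affine-invariance of $\rmdn$, the $O(2^d+1/\delta)=O(q(n))$-query one-sided tester obtained by repeating $\tgowers$ via Theorem~\ref{thm:main}, and the $\Omega(2^d+1/\delta)=\Omega(q(n))$ two-sided lower bound of \cite{AKKLR} at $d=\lceil\log_2 q(n)\rceil$. Your added bookkeeping (rounding $2^{\lceil\log_2 q(n)\rceil}=\Theta(q(n))$ and the degenerate ranges of $d$) is consistent with, and slightly more careful than, the paper's sketch.
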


\subsection{Technique}
\label{ssec:tech}

Our main theorem (Theorem~\ref{thm:main}) is obtained
by a novel proof that gives a (yet another!) new
analysis even of the classical linearity test of Blum, Luby,
Rubinfeld~\cite{BLR}.
The heart of our proof
is an inductive argument on $n$,
  the dimension of the ambient space.
While proofs that use induction on $n$ have been used before in the
literature
on low-degree testing (see, for instance, \cite{BFL,BFLS,FGLSS}),
they tend
to have a performance guarantee that degrades significantly with $n$.
Indeed no inductive proof was known even for the case of testing
linearity of functions from $\F_2^n \to \F_2$ that showed that functions
at $\Omega(1)$ distance from linear functions are rejected with
$\Omega(1)$ probability. (We note that the original analysis of
\cite{BLR} as well as the later analysis of \cite{BCHKS} do
give such bounds - but they do not use induction on $n$.)
In the process of giving a tight analysis of the \cite{AKKLR}
test for Reed-Muller codes, we thus end up giving a new (even if
weaker) analysis of the linearity test over $\F_2^n$. Below
we give the main idea behind our proof.

Consider a function $f$ that is $\delta$-far from every
degree $d$ polynomial. For a ``hyperplane'', i.e., an
$(n-1)$-dimensional affine subspace $A$ of $\F_2^n$, let
$f|_A$ denote the restriction of $f$ to $A$. We first
note that the test can be interpreted as first picking
a random hyperplane $A$ in $\F_2^n$ and then picking
a random $(d+1)$-dimensional affine subspace $A'$ within
$A$ and testing if $f|_{A'}$
is a degree $d$ polynomial. Now, if on every hyperplane $A$,
$f|_A$ is still $\delta$-far from degree $d$ polynomials
then we would be done by the inductive hypothesis.
In fact our hypothesis gets weaker as $n \to \infty$, so
that we can even afford a few hyperplanes where
$f|_A$ is not $\delta$-far. The crux of our analysis is
when for several (but just $O(2^d)$) hyperplanes $f|_A$ is close to some degree $d$ polynomial $P_A$. In this case
we manage to ``sew'' the different polynomials $P_A$ (each
defined on some $(n-1)$-dimensional subspace within $\F_2^n$)
into a degree $d$ polynomial $P$ that agrees with {\em all}
the $P_A$'s. We then show that this polynomial is close
to $f$, completing our argument.

To stress the novelty of our proof, note that this is not
a ``self-correction'' argument as in \cite{AKKLR},
where one defines a natural
function that is close to $P$, and then works hard to prove
it is a polynomial of appropriate degree. In contrast, our function
is a polynomial by construction and the harder part (if any)
is to show that the polynomial is close to $f$.
Moreover, unlike other inductive proofs, our main gain is
in the fact that the new polynomial $P$ has degree no greater
than that of the polynomials given by the induction.

\iffalse{
}\fi

\paragraph{Organization of this paper:}
We prove our main theorem,  Theorem~\ref{thm:main},
in Section~\ref{sec:mainproof} assuming three lemmas, two of which
study the rejection
probability of the $k$-dimensional affine subspace test, and another
that
relates the rejection probability of the basic $(d+1)$-dimensional
affine subspace test to that
of the $k$-dimensional affine subspace test.
\ifnum\stoc=1
The first two lemmas are proved in the following section,
Section~\ref{sec:analysis}, while the third is deferred to the
appendix. The appendix also includes details of the new
relationship to the Gowers norm,
the proof of our improved hardness amplification theorem,
(Theorem~\ref{thm:xor})
and the tightness analysis of our main theorem (based on a
tightening of a proof due to \cite{LMS}).
Some proofs that are abbreviated in this version may be
found in more detail in the full version of this paper~\cite{BKSSZ}.
\else
These three lemmas are
proved in the following section,
Section~\ref{sec:analysis}.

We give the relationship to the Gowers norm in Section~\ref
{sec:gowers}, and
we prove our improved hardness amplification theorem,
Theorem~\ref{thm:xor},
in Section~\ref{sec:xor}.
Finally, we show the tightness of our main theorem in the appendix.
\fi

\section{Proof of Main Theorem}
\label{sec:mainproof}

In this section we prove Theorem~\ref{thm:main}.
We start with an overview of our proof.
Recall that a $k$-flat is an affine subspace of
dimension $k$, and a hyperplane is an $(n-1)$-flat.
%Recall that a \emph{$k$-flat} is an affine subspace of dimension $k$.

The proof of the main theorem proceeds as follows. We begin by
studying a variant of the basic tester $\tgowers$, which we call $T_{d,k}$
or the {\em $k$-flat test}. For an integer $k \geq d+1$,
$T_{d,k}^f$ picks a uniformly random $k$-flat in $\F_2^n$,
and accepts if and only if the restriction of $f$ to that flat has
degree at most~$d$.
In this language, the tester $\tgowers$ of interest to us is $T_{d,d+1}$.
To prove Theorem~\ref{thm:main}, we first show that for $k \approx d
+ 10$,
the tester $T^f_{d,k}$ rejects with constant probability if $\delta_d
(f)$ is $\Omega(2^{-d})$ (see Lemma~\ref{lem:induction}). We then
relate the rejection probabilities
of $T_{d,k}^f$ and $\tgowers^f$ (see Lemma~\ref{lem:ktod}).

The central ingredient in our analysis is thus Lemma~\ref
{lem:induction} which
is proved by induction on $n$, the dimension of the ambient space.
Recall that
we want to show that the two quantities (1) $\delta_d(f)$ and (2) $\Pr
[ T_{d,k}^f \mbox{ rejects}]$,
are closely related.
We consider what happens to $f$ when restricted to some
hyperplane $A$. Denote such a restriction
by $f|_A$.
For a hyperplane $A$ we consider the corresponding two quantities
(1) $\delta_d(f|_A)$ and (2) $\Pr[ T_{d,k}^{f|_A} \mbox{ rejects}]$.
The inductive hypothesis tells us that these two quantities are
closely related for
each $A$. Because of the local nature of tester $T_{d,k}$, it follows
easily that
$\Pr[ T_{d,k}^f \mbox{ rejects}]$ is the average of $\Pr[ T_{d,k}^{f|
_A} \mbox{ rejects}]$
over all hyperplanes $A$. The main technical content of
Lemma~\ref{lem:induction} is that there is a similar tight
relationship between $\delta_d(f)$
and the numbers $\delta_d(f|_A)$ as $A$ varies over all hyperplanes $A$.
This relationship suffices to complete the proof.
The heart of our analysis focuses on the case where for many
hyperplanes (about
$2^k$ of them, independent of $n$),
the quantity $\delta_d(f|_A)$ is very small (namely,
for many $A$, there is a polynomial $P_A$ of degree $d$ that is very
close to $f|_A$).
In this case, we show how to ``sew'' together the polynomials
$P_A$ to get a
polynomial $P$ on $\F_2^n$ that is also very close to $f$.
In contrast to prior approaches which yield a polynomial $P$
with larger degree
than that of the $P_A$'s, our analysis crucially
preserves this degree, leading to the eventual tightness of our
analysis.

%The above test, for $k=d+1$ was proposed and analyzed
%in \cite{AKKLR}.

We now turn to the formal proof.

\subsection{Preliminaries}

We begin by formally introducing the $k$-flat
test and some related notation.

\begin{definition}[$k$-flat test $T_{d,k}$]
The test $T_{d,k}^f$ picks a random $k$-flat $A \subseteq \F_2^n$
and accepts if and only if $f|_A$ ($f$ restricted to $A$) is a
polynomial
of degree at most $d$.

The rejection probability of $T_{d,k}^f$ is denoted $\Rej_{d, k}(f)$.
In words, this is the probability that $f|_A$ is not a degree $d$
polynomial when
$A$ is chosen uniformly at random among all $k$-flats
of $F_2^n$.
\end{definition}

Although we don't need it for our argument, we note that $\tgowers = T_{d,d+1}$
accepts if and only if the $2^{d+1}$ evaluations $f|_A$ sum to 0.

The following folklore proposition shows that for $k \geq d+1$, $T_
{d,k}$ has
perfect completeness.

\begin{proposition}
For every $k \geq d+1$,
$\delta_d(f) = 0$ if and only if $\Rej_{d,k}(f) = 0$.
\end{proposition}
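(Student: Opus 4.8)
The plan is to prove both implications directly from the multilinear expansion $f(x) = \sum_{S \subseteq [n]} c_S \prod_{i \in S} x_i$ over $\F_2$, using the standard facts that $\deg f = \max\{|S| : c_S \neq 0\}$ and that composing $f$ with an affine map never increases degree. I would first dispose of the degenerate cases ($n \le d$, where $\delta_d(f) = 0$ holds automatically, and $n < k$, where there is nothing to check), so assume $n \ge k \ge d+1$.

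For the forward direction, if $\delta_d(f) = 0$ then $f$ is a polynomial of degree at most $d$. A $k$-flat $A$ is the image of an affine map $\phi\colon \F_2^k \to \F_2^n$ each of whose coordinates is an affine function of the parameters, so $f|_A$ corresponds to $f \circ \phi$, a polynomial of degree at most $d$ in the $k$ parameters. Hence $f|_A$ has degree at most $d$ for every $k$-flat $A$, and $T_{d,k}^f$ never rejects, i.e.\ $\Rej_{d,k}(f) = 0$.

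For the reverse direction, suppose $\Rej_{d,k}(f) = 0$. Every $(d+1)$-flat sits inside some $k$-flat, and restriction to a subflat cannot increase degree, so $f$ restricted to every $(d+1)$-flat has degree at most $d$; equivalently, by the observation recorded just above that $T_{d,d+1}$ accepts on a flat $A$ exactly when the $2^{d+1}$ values of $f$ on $A$ sum to $0$, we get $\sum_{x \in A} f(x) = 0$ for every $(d+1)$-flat $A$. It suffices to use this for the axis-aligned flats $A_{T,a} = \{x : x_j = a_j \text{ for } j \notin T\}$, where $T \subseteq [n]$, $|T| = d+1$, and $a \in \F_2^{[n]\setminus T}$.

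To conclude I would substitute the multilinear expansion into $\sum_{x \in A_{T,a}} f(x)$ and track the surviving terms: the monomial $\prod_{i \in S} x_i$ contributes $\prod_{i \in S \setminus T} a_i$ when $T \subseteq S$ and contributes $0$ otherwise (since $\sum_{\lambda \in \F_2^T} \prod_{i \in S \cap T} \lambda_i$ counts $2^{|T| - |S \cap T|}$ points, which is even unless $S \cap T = T$). Thus $\sum_{x \in A_{T,a}} f(x) = \sum_{S \supseteq T} c_S \prod_{i \in S \setminus T} a_i$, and its vanishing for all $a$ forces $c_S = 0$ for every $S \supseteq T$ by linear independence of distinct multilinear monomials as functions on $\F_2^{[n]\setminus T}$. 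Letting $T$ range over all $(d+1)$-subsets, and noting that every $S$ with $|S| \ge d+1$ contains one, we get $c_S = 0$ whenever $|S| > d$, so $\deg f \le d$ and $\delta_d(f) = 0$. The only step needing genuine care is this monomial bookkeeping; everything else is formal, so I anticipate no real obstacle — which is presumably why the statement is labelled folklore.
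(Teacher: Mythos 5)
Your proof is correct. The paper states this proposition as folklore and omits its proof entirely, so there is no argument of the authors' to compare against; your two directions are the standard justification — completeness because restricting to a $k$-flat amounts to composing $f$ with an affine map, which cannot raise the degree, and soundness because the sum of $f$ over an axis-aligned $(d+1)$-flat $A_{T,a}$ equals $\sum_{S \supseteq T} c_S \prod_{i \in S\setminus T} a_i$, whose vanishing for all $a$ kills every coefficient $c_S$ with $|S|>d$ by uniqueness of the multilinear representation. The reduction from $k$-flats to $(d+1)$-flats and your handling of the degenerate cases $n \le d$ and $n < k$ are also fine, so the argument stands as a complete proof of the proposition.
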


\subsection{Key Lemmas}
We now state our three key lemmas, and then use them to
finish the proof
of Theorem~\ref{thm:main}.
The first is a simple lemma that says if the function
is sufficiently close to a degree $d$ polynomial,
then the rejection probability is linear in its distance
from degree $d$ polynomials.
\begin{lemma}
\label{lem:pw}
For every $k, \ell, d$ such that $k \geq \ell \geq d+1$, if $\delta
(f) = \delta$ then
$\Rej_{d,k}(f) \geq 2^\ell \cdot \delta \cdot (1 - (2^\ell-1)\delta)$.
In particular, if $\delta \leq 2^{-(d+2)}$ then
$\Rej_{d,k}(f) \geq \min\{\frac18,2^{k-1} \cdot \delta\}$.
\end{lemma}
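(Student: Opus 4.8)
I would prove this in two moves. First, reduce the $k$-flat test to the $\ell$-flat test: since $k\ge\ell$, I claim $\Rej_{d,k}(f)\ge\Rej_{d,\ell}(f)$. To see this, pick a uniformly random $k$-flat $A$ and then a uniformly random $\ell$-subflat $B\subseteq A$; the induced distribution on $B$ is invariant under the affine group, hence uniform over all $\ell$-flats of $\F_2^n$. If $f|_B$ has degree $>d$, then a fortiori $f|_A$ has degree $>d$, since restriction never increases degree. So $\Rej_{d,k}(f)\ge\Rej_{d,\ell}(f)$, and it suffices to show $\Rej_{d,\ell}(f)\ge 2^{\ell}\delta\,(1-(2^{\ell}-1)\delta)$ for every $\ell$ with $d+1\le\ell\le k$. (We may assume $n\ge k$, else $\delta_d(f)=0$ and there is nothing to prove.)

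\textbf{The core estimate.} Let $P$ be a degree-$d$ polynomial with $\delta(f,P)=\delta$, and let $E=\{x:f(x)\ne P(x)\}$, so $|E|=\delta 2^{n}$. Take a uniformly random $\ell$-flat $B$. Identifying $B$ with $\F_2^{\ell}$, the function $(f-P)|_B$ is the indicator of $E\cap B$. The key observation is that \emph{one} bad point already causes rejection: if $|E\cap B|=1$ then, after an affine change of coordinates, $(f-P)|_B=\prod_{i=1}^{\ell}(1+x_i)$, which has degree exactly $\ell\ge d+1$; since $P|_B$ has degree $\le d$, this forces $\deg(f|_B)>d$. Hence $\Rej_{d,\ell}(f)\ge\Pr[\,|E\cap B|=1\,]$, and by inclusion--exclusion
$$\Pr[\,|E\cap B|=1\,]\ \ge\ \sum_{p\in E}\Pr[p\in B]\;-\;\sum_{\substack{p,q\in E\\ p\ne q}}\Pr[p,q\in B].$$

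\textbf{Computing the two probabilities.} By transitivity of the affine group on points and on ordered pairs of distinct points, I only need $\Pr[p\in B]$ and $\Pr[p,q\in B]$. Counting $\ell$-flats with Gaussian binomials --- there are $\binom{n}{\ell}_{2}2^{n-\ell}$ of them, $\binom{n}{\ell}_{2}$ through a fixed point, and $\binom{n-1}{\ell-1}_{2}$ through a fixed pair --- and using $\binom{n}{\ell}_{2}=\frac{2^{n}-1}{2^{\ell}-1}\binom{n-1}{\ell-1}_{2}$, one gets $\Pr[p\in B]=2^{\ell-n}$ and $\Pr[p,q\in B]=\frac{(2^{\ell}-1)2^{\ell}}{(2^{n}-1)2^{n}}$. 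Substituting $|E|=\delta 2^{n}$ (so the number of ordered pairs is $\delta 2^{n}(\delta 2^{n}-1)$), the first sum is exactly $2^{\ell}\delta$, and the second is $\delta\cdot\frac{\delta 2^{n}-1}{2^{n}-1}\cdot(2^{\ell}-1)2^{\ell}\le\delta^{2}(2^{\ell}-1)2^{\ell}$, since $\frac{\delta 2^{n}-1}{2^{n}-1}\le\delta$ for $\delta\le 1$. Subtracting yields $\Rej_{d,\ell}(f)\ge 2^{\ell}\delta-\delta^{2}(2^{\ell}-1)2^{\ell}=2^{\ell}\delta\,(1-(2^{\ell}-1)\delta)$, which is the main claim. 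The ``in particular'' then follows by choosing $\ell$ well: when $\delta\le 2^{-(d+2)}$ the set $S=\{\ell:d+1\le\ell\le k,\ 2^{\ell}\delta\le\tfrac12\}$ contains $d+1$, so let $\ell=\max S$. If $\ell=k$ then $(2^{k}-1)\delta<\tfrac12$, giving $\Rej_{d,k}(f)\ge 2^{k-1}\delta$; if $\ell<k$ then maximality gives $2^{\ell}\delta>\tfrac14$ while $1-(2^{\ell}-1)\delta>\tfrac12$, so $\Rej_{d,k}(f)\ge\Rej_{d,\ell}(f)>\tfrac18$. In both cases $\Rej_{d,k}(f)\ge\min\{\tfrac18,2^{k-1}\delta\}$.

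\textbf{Where the difficulty sits.} None of this is deep, but the one place to be careful is getting the pairwise-collision probability $\Pr[p,q\in B]$ exactly right and then observing $\frac{\delta 2^{n}-1}{2^{n}-1}\le\delta$; these are precisely what make the second-moment bound collapse to the clean value $2^{\ell}\delta(1-(2^{\ell}-1)\delta)$ --- matching the i.i.d.\ heuristic $\binom{2^{\ell}}{1}\delta(1-\delta)^{2^{\ell}-1}$ --- rather than leaving a spurious constant factor. The conceptual point worth flagging is the ``degree jump'': a single misclassified point on an $\ell$-flat already pushes the degree up to $\ell>d$, so we never need more than first- and second-moment information about $|E\cap B|$.
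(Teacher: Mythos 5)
Your proposal is correct and takes essentially the same approach as the paper: the same observation that a single disagreement point on an $\ell$-flat forces rejection, the same first/second-moment (inclusion--exclusion) bound giving $2^\ell\delta(1-(2^\ell-1)\delta)$ (you count from the side of the bad points and the flats containing them, while the paper parametrizes the points of a random flat and bounds $\Pr[E_x]$ and $\Pr[E_x \wedge E_y]$ --- an equivalent computation), followed by monotonicity in the flat dimension and the same case analysis on $\ell$ for the ``in particular'' clause. One small inaccuracy worth fixing: your parenthetical claim that $n<k$ would force $\delta_d(f)=0$ is false when $d+1\le n<k$; that degenerate case is simply outside the lemma's scope, since the test $T_{d,k}$ presupposes $n\ge k$.
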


The next lemma is the heart of our analysis and allows us to lower
bound the rejection
probability when the function is bounded away from degree $d$
polynomials.
\begin{lemma}
\label{lem:induction}
There exist positive constants $\beta < 1/4, \epsilon_0, \gamma$
and $c$ such that
the following holds for every $d, k, n$, such that $n \geq k \geq d+c$.
Let $f:\F_2^n \to \F_2$ be such that $\delta(f) \geq
\beta \cdot 2^{-d}$. Then $\Rej_{d,k}(f) \geq \epsilon_0 + \gamma
\cdot 2^d/2^n$.
\end{lemma}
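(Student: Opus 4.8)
The plan is to prove Lemma~\ref{lem:induction} by induction on $n$, with the base case $n = k$ being essentially trivial (if $n = k$, a random $k$-flat is all of $\F_2^n$, so $\Rej_{d,k}(f) = 1$ whenever $\delta_d(f) > 0$, which certainly exceeds $\epsilon_0 + \gamma \cdot 2^d / 2^n$ if we choose the constants appropriately). For the inductive step with $n > k$, I would use the observation stated in the overview: picking a random $k$-flat in $\F_2^n$ is equivalent to first picking a random hyperplane $A$ (an $(n-1)$-flat) and then picking a random $k$-flat inside $A$. Hence $\Rej_{d,k}(f) = \E_A\left[\Rej_{d,k}(f|_A)\right]$, where the expectation is over uniformly random hyperplanes $A$. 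By the inductive hypothesis applied in dimension $n-1$, for every hyperplane $A$ with $\delta_d(f|_A) \geq \beta \cdot 2^{-d}$ we have $\Rej_{d,k}(f|_A) \geq \epsilon_0 + \gamma \cdot 2^d / 2^{n-1}$, and for hyperplanes with small but nonzero $\delta_d(f|_A)$ we instead invoke Lemma~\ref{lem:pw}, which gives $\Rej_{d,k}(f|_A) \geq \min\{\tfrac18, 2^{k-1} \delta_d(f|_A)\}$.

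The case analysis then splits on how many hyperplanes $A$ have $f|_A$ close to a degree-$d$ polynomial. Call a hyperplane \emph{good} if $\delta_d(f|_A) < \beta \cdot 2^{-d}$ and \emph{bad} otherwise. If a constant fraction (say at least some small $\rho$) of hyperplanes are bad, then averaging the inductive bound over those hyperplanes already gives $\Rej_{d,k}(f) \geq \rho \cdot (\epsilon_0 + \gamma \cdot 2^d/2^{n-1}) \geq \epsilon_0 + \gamma \cdot 2^d/2^n$ provided $\epsilon_0$ is chosen small relative to $\rho\epsilon_0$ — more precisely, we want $\rho\epsilon_0 + 2\gamma\rho \cdot 2^d/2^n \geq \epsilon_0 + \gamma 2^d/2^n$, which forces a careful choice: we actually need the number of bad hyperplanes to be at least, say, $2^n/(2 \cdot 2^{n-1}) \cdot (\text{something})$; the clean way is to show that either many bad hyperplanes exist (enough to carry the induction with room to spare for the $\gamma 2^d/2^n$ slack term, exploiting that $2^d/2^{n-1} = 2 \cdot 2^d/2^n$), or else almost all hyperplanes are good, in which case we do the sewing argument. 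The slack built into the $\gamma \cdot 2^d/2^n$ term, which \emph{grows} as $n$ decreases, is exactly what makes the averaging work: passing from $n$ to $n-1$ the per-hyperplane guarantee improves by a factor $2$ in the $\gamma$-term, and we only lose a fraction of hyperplanes.

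The heart, and the main obstacle, is the \textbf{sewing argument} in the regime where almost all hyperplanes are good. Here the idea is: for each good hyperplane $A$ fix a degree-$d$ polynomial $P_A$ with $\delta(f|_A, P_A) = \delta_d(f|_A) < \beta \cdot 2^{-d}$. One must first argue this $P_A$ is \emph{unique} — this follows because two distinct degree-$d$ polynomials on an $(n-1)$-dimensional space differ on at least a $2^{-d}$ fraction of points (minimum distance of Reed--Muller), and $2\beta \cdot 2^{-d} < 2^{-d}$ since $\beta < 1/4$. Next, for two good hyperplanes $A, A'$ intersecting in an $(n-2)$-flat $B$, the restrictions $P_A|_B$ and $P_{A'}|_B$ must agree (both are close to $f|_B$, and again by minimum distance of $\RM(d, n-2)$ and a counting argument that $f|_B$ is close to degree $d$ since $B$ is covered by good hyperplanes — here one needs that a random $(n-2)$-flat inside a random hyperplane is a random $(n-2)$-flat, plus a union bound to show most $(n-2)$-flats lie in two good hyperplanes). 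The key structural fact to establish is: \emph{if a family of degree-$d$ polynomials, one per good hyperplane, is pairwise consistent on overlaps, and good hyperplanes are numerous enough, then there is a single global degree-$d$ polynomial $P$ on $\F_2^n$ restricting to each $P_A$}. I would prove this by taking two good hyperplanes $A_1, A_2$ with $A_1 \cup A_2$ spanning enough of the space, using that $\F_2^n = A_1 \cup A_2 \cup (\text{one more coset})$ is false in general so instead: pick any good hyperplane $A_0$, extend $P_{A_0}$ to a degree-$d$ polynomial $P$ on $\F_2^n$ (many extensions exist), and then adjust using a second good hyperplane in a transversal direction to pin down $P$ uniquely; pairwise consistency forces $P$ to agree with $P_A$ on every good $A$. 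Finally, to conclude $\delta_d(f) \leq $ something small, hence contradicting $\delta(f) \geq \beta \cdot 2^{-d}$ unless the rejection probability was already large: count points $x$ where $f(x) \neq P(x)$ by noting each such $x$ lies in many hyperplanes, and on a good hyperplane $A$ containing $x$, $f(x) \neq P(x) = P_A(x)$ contributes to the error $\delta(f|_A, P_A)$; averaging, $\delta(f, P) \leq \frac{1}{\#\{\text{good }A \ni x\}} \cdot (\text{avg error on good hyperplanes}) \cdot (\text{correction for bad hyperplanes})$, which comes out below $\beta \cdot 2^{-d}$ when the bad hyperplanes are few — contradiction, so this regime cannot occur and we are always in the "many bad hyperplanes" case. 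The delicate quantitative point, and where I expect to spend the most effort, is balancing the three constants $\beta$, $\epsilon_0$, $\gamma$ (and choosing $c$ large enough, e.g. $c = 10$, so that $2^{k-1} \beta \cdot 2^{-d} \geq \tfrac18$ and the Lemma~\ref{lem:pw} bound for slightly-bad hyperplanes is itself $\geq \tfrac18 > \epsilon_0$) so that every branch of the case analysis closes with the required $\epsilon_0 + \gamma \cdot 2^d/2^n$ and not merely $\epsilon_0$.
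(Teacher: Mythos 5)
Your overall strategy (induction on $n$ via restriction to hyperplanes, plus a sewing argument when many restrictions are close to degree~$d$) is the same as the paper's, but two steps in your plan have genuine gaps. First, the case split. For the averaging step to upgrade the per-hyperplane bound $\epsilon_0 + \gamma 2^d/2^{n-1}$ to the required $\epsilon_0 + \gamma 2^d/2^n$, the only slack available is $\gamma 2^d/2^{n-1} - \gamma 2^d/2^n = \gamma 2^d/2^n$, while the loss from discarding the $K$ good hyperplanes is about $(K/N)\cdot\epsilon_0$ with $N \approx 2^{n+1}$. So this branch closes only when $K = O(2^d)$ -- an \emph{absolute} bound, not ``a constant fraction of hyperplanes are bad.'' Consequently the complementary branch is merely $K > \gamma 2^d$, which for large $n$ is a vanishingly small fraction of all hyperplanes, nowhere near ``almost all good.'' Your dichotomy leaves the middle regime uncovered: with, say, $2^{n/2}$ good hyperplanes the bad fraction is $1-o(1)$ yet the averaging falls short (one needs $\epsilon_0 2^{n/2}/2^{n+1} \leq \gamma 2^d/2^n$, false for large $n$), and your sewing sketch -- which leans on near-total coverage (``most $(n-2)$-flats lie in two good hyperplanes,'' every error point lying in many good hyperplanes) -- does not apply. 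The paper's Lemma~\ref{lem:patch} is built precisely for the sparse regime: from only $K > 2^{d+1}$ close hyperplanes it extracts $\ell = \lfloor\log_2(K+1)\rfloor > d$ \emph{linearly independent} ones, sews their polynomials coefficient-by-coefficient (after an affine change making them $x_i = 0$) into a single $P$ of degree at most $d$, proves $P|_{A_i} = P_i|_{A_i}$ for \emph{all} $K$ hyperplanes because the union of the $\ell$ independent ones covers a $1-2^{-\ell} > 1-2^{-d}$ fraction of each $A_i$, and bounds $\delta(f,P) \leq \tfrac32\alpha + 9/K$ by a pairwise-independence (Chebyshev) argument over the $K$ hyperplanes alone. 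Your alternative construction -- extend $P_{A_0}$ off one hyperplane and ``adjust'' using one transversal good hyperplane -- does not control the degree: interpolating from so little data naturally yields degree $d+1$, and preserving degree $d$ is exactly the point where more than $d$ independent hyperplanes are needed.

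Second, the endgame of the sewing case cannot be a contradiction. From per-hyperplane error $\alpha = \beta 2^{-d}$ the double-counting argument yields only $\delta_d(f) \leq \tfrac32\alpha + 9/K$ (a constant-factor loss is unavoidable since each point lies in only about half of the good hyperplanes), and $\tfrac32\beta 2^{-d} + 9/K$ is never below $\beta 2^{-d}$, so it does not contradict the hypothesis $\delta_d(f) \geq \beta 2^{-d}$. The paper instead uses this upper bound positively: since $\tfrac32\beta 2^{-d} + 9/(\gamma 2^d) < 2^{-(d+2)}$ for the chosen constants, Lemma~\ref{lem:pw} applies to $f$ itself and gives $\Rej_{d,k}(f) \geq \min\{2^{k-1}\delta_d(f), \tfrac18\} \geq \min\{2^{c-1}\beta, \tfrac18\}$, which the constraints on $\beta,\epsilon_0,\gamma,c$ make at least $\epsilon_0 + \gamma 2^d/2^n$. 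You in fact list the right ingredients at the end (the condition $2^{k-1}\beta 2^{-d} \geq \tfrac18$), but you deploy them per-hyperplane for ``slightly-bad'' restrictions rather than globally on $f$ in this case, which is where they are actually needed.
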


The final lemma relates the rejection probabilities of different
dimensional tests.
\begin{lemma}
\label{lem:ktod}
For every $n, d$ and $k \geq k' \geq d+1$, and every $f: \F_2^n
\rightarrow \F_2$, we have
$$ \Rej_{d, k'}(f) \geq \Rej_{d, k}(f) \cdot 2^{-(k-k')}.$$
\end{lemma}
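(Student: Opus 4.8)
The plan is to reduce the $k'$-flat test to the $k$-flat test by exhibiting, for every fixed $k$-flat $A$, a simple relation between $\Rej_{d,k'}(f|_A)$ and the event that $f|_A$ has degree $> d$. Observe first that sampling a uniformly random $k'$-flat of $\F_2^n$ can be done in two stages: pick a uniformly random $k$-flat $A \subseteq \F_2^n$, and then pick a uniformly random $k'$-flat $A' \subseteq A$. (This is the standard nesting of affine subspaces; it is exactly the ``local'' decomposition already used for hyperplanes in the overview of Lemma~\ref{lem:induction}, iterated down to dimension $k'$.) Consequently
$$\Rej_{d,k'}(f) = \mathbb{E}_{A}\big[\Rej_{d,k'}(f|_A)\big],$$
where the expectation is over uniformly random $k$-flats $A$, and $\Rej_{d,k'}(f|_A)$ is computed inside the ambient space $A \cong \F_2^k$. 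By the same token $\Rej_{d,k}(f) = \Pr_A[\,f|_A \text{ has degree} > d\,]$, since a single $k$-flat is itself the whole space for that test.

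The key step, then, is the claim: \emph{if $g : \F_2^k \to \F_2$ has degree $> d$, then $\Rej_{d,k'}(g) \geq 2^{-(k-k')}$.} Given this, we finish by
$$\Rej_{d,k'}(f) = \mathbb{E}_A\big[\Rej_{d,k'}(f|_A)\big] \;\geq\; \Pr_A[\deg(f|_A) > d]\cdot 2^{-(k-k')} \;=\; \Rej_{d,k}(f)\cdot 2^{-(k-k')},$$
discarding the (nonnegative) contribution of the flats on which $f|_A$ already has degree $\le d$. To prove the claim I would peel off one dimension at a time: it suffices to show that if $g:\F_2^m \to \F_2$ has degree $>d$, then a uniformly random hyperplane (i.e.\ $(m-1)$-flat) of $\F_2^m$ has $g$ restricted to it of degree $>d$ with probability at least $1/2$; iterating this $k-k'$ times and using that every $k'$-flat in $\F_2^k$ arises from such a chain gives the bound $2^{-(k-k')}$. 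The one-step statement is a standard fact about $\F_2$-polynomials: writing the degree-$(>d)$ part of $g$ and restricting to the hyperplane $\{x : \langle a, x\rangle = b\}$, one eliminates a coordinate; the top-degree terms survive for at least half the choices of $(a,b)$ because the highest-degree homogeneous part cannot be annihilated by every coordinate substitution unless it was already of lower degree. (Concretely: if $\deg g = e > d$, fix a monomial $m$ of degree $e$ in $g$ and a variable $x_i$ appearing in $m$; for the $2^{m}$ hyperplanes not of the form $x_i = b$, substituting out some other variable preserves $x_i \mid m$ and one checks the degree-$e$ part does not vanish for at least half of them.)

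The main obstacle is getting the one-dimension-down claim airtight, i.e.\ proving that a random hyperplane restriction of a degree-$e$ function over $\F_2$ keeps degree $\ge e$ (hence $> d$) with probability $\ge 1/2$, and doing so in a way that composes cleanly over $k-k'$ steps so that the exceptional fractions multiply to give exactly $2^{-(k-k')}$ rather than something weaker. The cleanest route is probably to avoid the step-by-step hyperplane argument entirely and instead argue directly: parametrize a random $k'$-flat $A'$ inside $\F_2^k$ and show that for $g$ of degree $>d$, the ``bad'' event $\deg(g|_{A'}) \le d$ has probability at most $1 - 2^{-(k-k')}$ by a dimension/counting argument on the space of affine restriction maps $\F_2^{k'} \to \F_2^k$ — essentially, the restrictions that kill the top-degree part form a proper ``subvariety'' whose density is bounded by the codimension $k-k'$. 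I expect one of these two formulations to go through with only elementary linear algebra over $\F_2$ and a careful accounting of which affine substitutions annihilate a fixed top-degree monomial.
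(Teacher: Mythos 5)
Your reduction is the same as the paper's: view the $k'$-flat test as first choosing a random $k$-flat $A$ and then a random $k'$-flat inside $A$, so that it suffices to show that every $g:\F_2^k\to\F_2$ of degree greater than $d$ satisfies $\Rej_{d,k'}(g)\geq 2^{-(k-k')}$, and to get this by peeling off one dimension at a time at rate $1/2$ (this is precisely the paper's Lemma~\ref{lem:exp}, built on Lemma~\ref{lem:basecase}). The gap is in the one-dimension-down step, which is the entire content of the lemma and which you do not actually prove. Your sketch argues that a random hyperplane restriction preserves the \emph{exact} top degree $e=\deg g$ with probability at least $1/2$, by tracking the survival of a fixed degree-$e$ monomial. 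That statement is false in general: if $e$ equals the number of ambient variables $m$ (e.g.\ $g=x_1x_2\cdots x_m$), then every hyperplane restriction is a function of $m-1$ variables and so has degree at most $m-1<e$, i.e.\ the exact degree is preserved with probability $0$; this case genuinely arises in your iteration, since $\deg(f|_A)$ can be as large as the ambient dimension. Even when $e<m$, the monomial-survival argument ignores cancellations among the images of different degree-$e$ monomials under the substitution, so the assertion that the degree-$e$ part survives for at least half the hyperplanes is exactly what needs proof rather than something you establish. (A small additional slip: there are $2^{m+1}-4$ hyperplanes not of the form $x_i=b$, not $2^m$.)

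The correct one-step statement is the weaker one you also state — the restriction keeps degree greater than $d$ (not degree $\geq e$) with probability at least $1/2$, provided the ambient dimension is at least $d+2$ — and the paper proves it by a different, global argument (Lemma~\ref{lem:basecase}): if a strict majority of hyperplanes had restrictions of degree at most $d$, then two \emph{complementary} hyperplanes would, so the function could be interpolated by a polynomial $P$ of degree at most $d+1$; the strict majority then yields at least $d+2$ linearly independent hyperplanes on which $P$ has degree at most $d$, and after an affine change of coordinates this forces $\prod_{i=1}^{d+2}x_i$ to divide the degree-$(d+1)$ homogeneous part of $P$, a contradiction. Note the dimension hypothesis matters and is available at every step of the iteration because $k'\geq d+1$. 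Your alternative suggestion (bounding the density of ``bad'' affine restriction maps by a codimension count) is likewise only a hope as stated. So the skeleton of your proof matches the paper's, but the key lemma that produces the factor $2^{-(k-k')}$ is missing, and the route you propose for it would not go through as written.
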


Given the three lemmas above, Theorem~\ref{thm:main} follows easily
as shown below.

\begin{proofof}{Theorem~\ref{thm:main}}
Let $\epsilon_0$ and $c$ be as in Lemma~\ref{lem:induction}.
We prove the theorem for $\epsilon_1 =
\epsilon_0 \cdot 2^{-(c-1)}$.
First note that if $\delta(f) \leq 2^{-(d+2)}$,
then we are done by Lemma~\ref{lem:pw}.
So assume $\delta(f) \geq 2^{-(d+2)} \geq \beta\cdot 2^{-d}$,
where $\beta$ is the constant from Lemma~\ref{lem:induction}.
By Lemma~\ref{lem:induction}, we know that $\Rej_{d,d+c}(f) \geq
\epsilon_0$.
Lemma~\ref{lem:ktod} now implies that
$\Rej_{d, d+1}(f) \geq \epsilon_0 \cdot 2^{-(c-1)}$, as desired.
\end{proofof}

\section{Analysis of the $k$-flat test}
\label{sec:analysis}

Throughout this section we fix $d$, so
we suppress it in the subscripts and
simply use $\delta(f) = \delta_d(f)$ and
$\Rej_k(f) = \Rej_{d,k}(f)$.

\subsection{Lemma \ref{lem:pw}: When $f$  is close to $\rmdn$}

Recall that we wish to prove

\noindent {\bf Lemma~\ref{lem:pw} (recalled):}
{\em For every $k, \ell, d$ such that $k \geq \ell \geq d+1$, if $
\delta(f) = \delta$ then
$\Rej_{k}(f) \geq 2^\ell \cdot \delta \cdot (1 - (2^\ell-1)\delta)$.
In particular, if $\delta \leq 2^{-(d+2)}$ then
$\Rej_{k}(f) \geq \min\{\frac18,2^{k-1} \cdot \delta\}$.
}

\begin{proofof}{Lemma \ref{lem:pw}}
The main idea is to show that with good probability, the flat will
contain exactly one point where
$f$ and the closest degree~$d$ polynomial differ, in which case the
test will reject.
The main claim we prove is that $\Rej_{\ell}(f) \geq
2^\ell \cdot \delta \cdot (1 - (2^\ell-1)\delta)$.
The first part then follows from the monotonicity of
the rejection probability, i.e., $\Rej_k(f) \geq
\Rej_{\ell}(f)$ if $k \geq \ell$. The second part follows
by setting $\ell = k$ if $\delta \leq 2^{-(k+1)}$ and
$\ell$ such that $2^{-(\ell+2)} < \delta \leq 2^{-(\ell+1)}$
otherwise. In the former case, we get $\Rej_{k}(f) \geq
2^{-(k-1)}\cdot\delta$ while in the latter case we get
$\Rej_{k}(f) \geq \Rej_{\ell}(f) \geq \frac18$.
We thus turn to proving
$\Rej_{\ell}(f) \geq
2^\ell \cdot \delta \cdot (1 - (2^\ell-1)\delta)$.

Let $g\in \rmdn$ be a polynomial achieving
$\delta(f) = \delta(f,g)$. Consider a random $\ell$-flat
$A$ of $\F_2^n$. We think of the points of $A$ as generated
by picking a random full-rank matrix $M \in \F_2^{n\times \ell}$ and a random
vector
$b \in \F_2^n$, and then letting $A = \{ a_x \eqdef M x + b \mid x
\in \F_2^{\ell}\}.$
Thus the points of $A$ are indexed by elements of $\F_2^\ell$.

For $x \in \F_2^\ell$, let $E_x$ be
the event that ``$f(a_x) \ne g(a_x)$''.
Further let $F_x$ be the event that ``$f(a_x) \ne g(a_x)$
and $f(a_y) = g(a_y)$ for every $y \ne x$''.
We note that if any of the events $F_x$ occurs (for $x \in
\F_2^\ell$), then
the $\ell$-flat test rejects $f$.
This is because distinct degree $d$ polynomials differ in at least
$2^{-d}$ fraction of points, so they cannot differ in exactly one point
if $\ell > d$.

We now lower bound the probability
of $\cup_x F_x$.
Using the fact that
$a_x$ is distributed
uniformly over $\F_2^n$ and
$a_y$ is distributed uniformly over $\F_2^n - \{a_x\}$, we note that
$\Pr[E_x] = \delta$ and $\Pr[E_x \mbox{ and } E_y] \leq \delta^2$.
We also have $\Pr[F_x] \geq \Pr[E_x] - \sum_{y \ne x} \Pr[E_x
\mbox{ and } E_y] \geq \delta - (2^\ell - 1)\cdot \delta^2$.
Finally, noticing that the events $F_x$ are mutually exclusive
we have that $\Pr[\cup_i F_i] = \sum_i \Pr[F_i] \geq
2^\ell \cdot \delta \cdot (1 - (2^\ell - 1)\cdot \delta)$,
as claimed.
% For the second part, we note that if $\delta \leq 2^{-(k+1)}$,
% then we have, $1 - (2^k - 1)\cdot \delta \geq 1/2$
% and so $\Rej_k(f) \geq 2^k \cdot \delta \cdot 1/2 = 2^{k-1}\cdot \delta$.
% For larger values of $\delta$, let $\ell$ be an integer
% such that $2^{-(\ell + 2)} \leq \delta \leq 2^{-(\ell + 1)}$.
% We then have that $\Rej_k(f) \geq
% \Rej_{\ell}(f) \geq 2^{\ell} \cdot \delta \cdot
% (1 - (2^\ell-1)\cdot \delta)
% \geq 2^{\ell} \cdot 2^{-(\ell+2)} \cdot (1 - 2^{\ell}\cdot 2^{-(\ell +1)}) =
% 1/8$, yielding the second part.
\end{proofof}

\subsection{Lemma \ref{lem:induction}: When $f$ is bounded away from $
\rmdn$}

The main idea of the proof of Lemma \ref{lem:induction} is to
consider the restrictions
of $f$ on randomly chosen ``hyperplanes'', i.e., $(n-1)$-flats. If on
an overwhelmingly large fraction (which will be
quantified in the proof) of hyperplanes, our function
is far from degree $d$ polynomials, then the inductive hypothesis
suffices to show that $f$ will be rejected with high probability
(by the $k$-flat test). The interesting case is
when the restrictions of $f$ to several hyperplanes are close to
degree $d$ polynomials. In Lemma~\ref{lem:patch} we
use the close polynomials on such hyperplanes to construct a
polynomial that
has significant agreement with $f$ on the union of the hyperplanes.

We start by first fixing some terminology.
%A {\em hyperplane} in $\F_2^n$ is an $(n-1)$-flat.
We say $A$ and $B$ are {\em complementary}
hyperplanes if $A \cup B = \F_2^n$. Recalling that
a hyperplane is the set of points $\{x \in \F_2^n | L(x) = b\}$
where $L: \F_2^n \rightarrow \F_2$ is a nonzero linear function and $b
\in \F_2$, we refer to $L$
as the linear part of the hyperplane. We say that hyperplanes $A_1,
\ldots,A_{\ell}$
are linearly independent if the corresponding linear parts are
independent.
The following proposition lists some basic facts about hyperplanes
that we
use. The proof is omitted.

\begin{proposition}[Properties of hyperplanes]

\label{prop:hyperplane}
\begin{enumerate}
\item There are exactly $2^{n+1} - 2$ distinct hyperplanes in $\F_2^n$.
\item Among any $2^{\ell}-1$ distinct hyperplanes, there are
% Any set of distinct hyperplanes $A_1,\ldots,A_{2^{\ell} - 1}$, contain
at least $\ell$ independent hyperplanes.
% among them.
\item There is an affine invertible transform that maps independent
hyperplanes $A_1,\ldots,A_{\ell}$ to
the hyperplanes $x_1 = 0, x_2 = 0, \ldots, x_{\ell} = 0$.
\end{enumerate}
\end{proposition}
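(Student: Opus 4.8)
The plan is to treat the three parts as independent linear-algebra facts over $\F_2$, throughout identifying a hyperplane with the (linear functional, bit) pair defining it, and a nonzero linear functional $L:\F_2^n\to\F_2$ with the nonzero vector $a$ for which $L(x)=\ip{a,x}$.

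For part (1), I would simply count: there are $2^n-1$ nonzero linear functionals, and each yields two hyperplanes, $\{L(x)=0\}$ and $\{L(x)=1\}$, giving $2(2^n-1)=2^{n+1}-2$ in all; it then remains to check these are pairwise distinct as subsets. If $\{L_1(x)=b_1\}=\{L_2(x)=b_2\}$, the two sets share their direction space, so $\ker L_1=\ker L_2$; since an $(n-1)$-dimensional subspace of $\F_2^n$ has exactly one nonzero functional vanishing on it (there is no ``up to scalar'' ambiguity, as the only nonzero scalar is $1$), we get $L_1=L_2$, and then $b_1=b_2$ since exactly one of the two parallel hyperplanes contains $0$. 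This is routine.

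For part (2), the key point is that the map sending a hyperplane to its linear part is at most two-to-one, so among $2^\ell-1$ distinct hyperplanes at least $\lceil(2^\ell-1)/2\rceil=2^{\ell-1}$ distinct nonzero vectors occur as linear parts. I would then use that any set $S$ of $2^{\ell-1}$ distinct nonzero vectors in $\F_2^n$ spans a subspace of dimension $\ge\ell$: if $\dim\mathrm{span}(S)=r\le\ell-1$, then $S$ would lie among the $2^r-1\le2^{\ell-1}-1<|S|$ nonzero vectors of that span, a contradiction. Choosing, for each of $\ell$ independent linear parts, a hyperplane from the original collection realizing it gives $\ell$ independent hyperplanes.

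For part (3), I would write $A_i=\{x:L_i(x)=b_i\}$ with $L_1,\dots,L_\ell$ linearly independent, extend to a basis $L_1,\dots,L_n$ of the dual space, and set $\Lambda(x)=(L_1(x),\dots,L_n(x))$, which is an invertible linear map. Then $T(x)\eqdef\Lambda(x)+(b_1,\dots,b_\ell,0,\dots,0)$ is an invertible affine map with $T(x)_i=L_i(x)+b_i$ for $i\le\ell$, so $x\in A_i\iff T(x)_i=0$, i.e.\ $T(A_i)=\{y:y_i=0\}$, as desired. I do not expect any genuine obstacle; the only points needing a moment of care are that over $\F_2$ an $(n-1)$-flat pins down its defining functional exactly (used in part 1), and that the ``hyperplane $\mapsto$ linear part'' map is precisely two-to-one onto its image, which is what validates the count $2^{\ell-1}$ in part (2).
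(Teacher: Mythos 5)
Your proposal is correct in all three parts: the two-to-one correspondence between nonzero linear functionals (with a constant) and hyperplanes gives the count $2^{n+1}-2$, the pigeonhole-plus-span-counting argument yields $\ell$ independent linear parts among $2^{\ell}-1$ distinct hyperplanes, and extending $L_1,\ldots,L_\ell$ to a dual basis and translating by $(b_1,\ldots,b_\ell,0,\ldots,0)$ gives the required affine invertible map. The paper explicitly omits the proof of this proposition, so there is nothing to compare against; your argument supplies exactly the routine linear-algebra facts the paper takes for granted.
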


We are now ready to prove Lemma~\ref{lem:induction}.
We first recall the statement.

\noindent {\bf Lemma~\ref{lem:induction} (recalled):}
{\em There exist positive constants $\beta < 1/4, \epsilon_0, \gamma$
and $c$ such that
the following holds for every $d, k, n$, such that $n \geq k \geq d+c$.
Let $f:\F_2^n \to \F_2$ be such that $\delta(f) \geq
\beta \cdot 2^{-d}$. Then $\Rej_{d,k}(f) \geq \epsilon_0 + \gamma
\cdot 2^d/2^n$.
}

\begin{proofof}{Lemma~\ref{lem:induction}}
We prove the lemma for every
$\beta < 1/24$,
$\epsilon_0 < 1/8$,
$\gamma \geq 72$, and $c$ such that
$2^c \geq \max\{4\gamma/(1 - 8\epsilon_0),\gamma/(1-\epsilon_0),2/
\beta\}$.
(In particular, the choices $\beta = 1/25$, $\epsilon_0 = 1/16$, $
\gamma = 72$
and $c = 10$ work.)

The proof uses
induction on $n-k$.
When $n = k$ we have $\Rej_{k}(f) = 1 \geq
\epsilon_0 + \gamma \cdot 2^{d-k}$ as required, because $2^c \geq
\frac{\gamma}{1-\epsilon_0}$.   So we move
to the inductive step.

Let $\mathcal H$ denote the set of hyperplanes in $\F_2^n$.
Let $N = 2(2^n-1)$ be the cardinality of $\mathcal H$.
Let $\mathcal H^*$ be the set of all the hyperplanes $A \in \mathcal H$
such that $\delta(f|_{A},\RM(d,n-1)) < \beta \cdot 2^{-d}$.
Let $K = |\mathcal H^*|$.

Now because a random $k$-flat of a random hyperplane {\em is} a random
$k$-flat, we have
\ifnum\stoc=0
$$\Rej_k(f) = \mathbb E_{A \in \mathcal H}[ \Rej_{k}(f |_A) ].$$
\else
$\Rej_k(f) = \mathbb E_{A \in \mathcal H}[ \Rej_{k}(f |_A) ]$.
\fi
By the induction hypothesis, for any $A \in \mathcal H \setminus
\mathcal H^*$, we have
\ifnum\stoc=0
$$\Rej_k(f |_A) \geq \epsilon_0 + \gamma \cdot \frac{2^d}{2^{n-1}}.$$
\else
$\Rej_k(f |_A) \geq \epsilon_0 + \gamma \cdot \frac{2^d}{2^{n-1}}$.
\fi
Thus,
\ifnum\stoc=0
$$\Rej_{k}(f) \geq \epsilon_0 + \gamma\cdot \frac{2^d}{2^{n-1}} - K/N.$$
\else
$\Rej_{k}(f) \geq \epsilon_0 + \gamma\cdot \frac{2^d}{2^{n-1}} - K/N$.
\fi
We now take cases on whether $K$ is large or small:

\begin{enumerate}
\item {\bf Case 1:} $K \leq \gamma \cdot 2^d$.\\
In this case,
$\Rej_{k}(f) \geq \epsilon_0 + \gamma \cdot 2^d/2^{n-1} - K/N \geq
\epsilon_0 + \gamma\cdot 2^{d}/2^{n}$ as desired.
\item {\bf Case 2:} $K > \gamma \cdot 2^d$.\\
Lemma~\ref{lem:patch} (below) shows that in this case,
% there is a degree $d$ polynomial $P:\F_2^n \to \F_2$ such that $\delta(f,P)
$\delta(f) \leq \frac{3}{2}\beta \cdot 2^{-d} + 9/(\gamma 2^d)
\eqdef \delta_0$,
provided $\beta\cdot 2^{-d} < 2^{-(d+2)}$
(which holds since $\beta < 1/24 < 1/4$).
\ifnum\stoc=1
From our choice of parameters, this turns out to be a sufficient
bound on $\delta(f)$ for the lemma.
\else

Note that since
$\beta < 1/24$ and $9/\gamma < 1/8$,
we get $\delta_0 < 2^{-(d+2)}$ and so
Lemma~\ref{lem:pw} implies
that $\Rej_{k}(f) \geq \min\{2^{k-1} \cdot
\delta(f),\frac18\}
\geq \min\{2^{k-1} \cdot \beta \cdot 2^{-d},\frac18\}$.
We verify both quantities above are at least
$\epsilon_0 + \gamma/2^{(c+1)} \geq \epsilon_0 + \gamma 2^d/2^n$.
The condition $1/8 > \epsilon_0 + \gamma/2^{c+1}$ follows
from the fact that $2^c \geq 4\gamma/(1 - 8\epsilon_0)$.
To verify the second condition, note that
$2^{k-1}\cdot\beta\cdot 2^{-d}
\geq 2^{c-1}\beta \geq 1$ since $2^c \geq 2/\beta$.
\fi

\end{enumerate}

\ifnum\stoc=0
We thus conclude that the rejection probability of $f$
is at least $\epsilon_0 + \gamma\cdot 2^d/2^n$ as claimed.
\fi
\end{proofof}

\begin{lemma}
\label{lem:patch}
For $f : \F_2^n \to \F_2$, let $A_1,\ldots,A_K$ be
hyperplanes such that $f|_{A_i}$ is $\alpha$-close
to some degree $d$ polynomial on $A_i$. If $K > 2^{d+1}$ and
$\alpha < 2^{-(d+2)}$, then
% there exists a degree $d$ polynomial $P \in \rmdn$ such that $\delta(f,P)
$\delta(f) \leq \frac{3}{2}\alpha + 9/K$.
\end{lemma}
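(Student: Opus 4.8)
The plan is to build a single degree $d$ polynomial $P$ on $\F_2^n$ that simultaneously agrees with the close polynomials on many of the hyperplanes, and then to bound $\delta(f,P)$ by splitting the error according to which hyperplanes a point lies on. First I would invoke Proposition~\ref{prop:hyperplane}(2): since $K > 2^{d+1}$, among $A_1,\dots,A_K$ there are at least $d+2$ linearly independent hyperplanes; after an invertible affine change of coordinates (Proposition~\ref{prop:hyperplane}(3)) I may assume these are $\{x_1=0\},\dots,\{x_{d+2}=0\}$. Let $P_i$ be a degree $d$ polynomial on $A_i$ with $\delta(f|_{A_i},P_i) \le \alpha$. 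The key structural point is that two degree $d$ polynomials that agree on a sub-hyperplane of a hyperplane (a codimension-$2$ flat in $\F_2^n$) and are both ``close to $f$'' there must in fact be consistent, because distinct degree $d$ polynomials differ on a $\ge 2^{-d}$ fraction of any flat of dimension $> d$, while $2\alpha < 2^{-(d+1)} < 2^{-d}$. So on the pairwise intersections $A_i \cap A_j$ (which have dimension $n-2 > d$), the restrictions of $P_i$ and $P_j$ coincide. This consistency on overlaps is exactly what lets one ``sew'' them together.

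The sewing step: I would first argue that the $P_i$ for the $d+2$ coordinate hyperplanes patch into a global degree $d$ polynomial $P$ on $\F_2^n$. One clean way is interpolation: a degree $d$ polynomial on $\F_2^n$ is determined by its restrictions to $\{x_1=0\}$ and the ``derivative'' data, and more simply, any degree $d$ function $g$ on $\F_2^{n-1} = \{x_1 = 0\}$ extends to the degree $d$ function $\tilde g(x) = g(x_2,\dots,x_n)$ on $\F_2^n$ (ignoring $x_1$); taking $P$ to be the polynomial obtained by this extension from one of the hyperplanes, and then checking against the others. Actually the cleanest route is: take $P$ to be the unique degree-$d$ interpolation of $f$ on $d+2$ generic parallel classes — but since that is not available, I would instead proceed directly. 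Let $P$ be a degree $d$ polynomial agreeing with $P_1$ on $A_1 = \{x_1 = 0\}$ (extend $P_1$ by declaring it independent of $x_1$). Then on $A_1 \cap A_2$, $P$ agrees with $P_1 = P_2$; since $P|_{A_2}$ and $P_2$ are both degree $\le d$ on $A_2 \cong \F_2^{n-1}$ and agree on a hyperplane of $A_2$, their difference is divisible by the linear form cutting out that hyperplane inside $A_2$, hence has degree $\le d$... this needs care and may force $P|_{A_2} = P_2$ only after choosing the extension appropriately. I would therefore instead define $P$ by Lagrange-style interpolation using all $d+2$ coordinate hyperplanes at once: writing the indicator structure of $\{x_1 = \epsilon_1, \dots\}$, one shows the function that equals $P_i$ on $A_i$ is forced to be a single polynomial of degree $\le d$ because the ``correction terms'' all vanish by the overlap-consistency and the degree budget $d+2$ on $d+1$-or-more-dimensional flats.

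Once $P$ of degree $\le d$ is constructed with $P|_{A_i} = P_i$ for the $d+2$ chosen hyperplanes (and hence, by the same overlap argument applied to $A_i \cap A_j$ for the remaining $j$, for \emph{all} $K$ of them, since each $A_j$ meets enough of the coordinate hyperplanes), I would bound $\delta(f,P)$. A uniformly random point $x \in \F_2^n$ lies on roughly $K/2$ of the $A_j$ in expectation if the $A_j$ were random, but they are not; instead I would use a counting/averaging argument: $\Pr_x[f(x) \ne P(x)] \le \Pr_x[x \text{ lies on } < K/3 \text{ of the } A_j] + \Pr_x[f(x)\ne P(x) \text{ and } x \text{ on} \ge K/3 \text{ of them}]$. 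On each $A_j$, $f$ and $P$ differ on $\le \alpha 2^{n-1}$ points, so $\sum_j |\{x \in A_j : f(x)\ne P(x)\}| \le K \alpha 2^{n-1}$; a point with $f(x) \ne P(x)$ lying on $\ge K/3$ of the $A_j$ is counted $\ge K/3$ times, so there are at most $3\alpha 2^{n-1} = \tfrac32 \alpha 2^n$ such points. For the first term, a point lies on few hyperplanes only if the linear forms $L_j$ of the $A_j$ mostly avoid it; since $K > 2^{d+1}$ the $L_j$ span a space of dimension $\ge d+2$, and an averaging argument over this span (each nonzero linear form vanishes on exactly half of $\F_2^n$, and by inclusion–exclusion / second-moment on the span) bounds $\Pr_x[x \text{ on } < K/3 \text{ of the } A_j]$ by $O(1/K)$ — this yields the $9/K$ term. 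Adding the two contributions gives $\delta(f) \le \delta(f,P) \le \tfrac32\alpha + 9/K$.

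The main obstacle I expect is the sewing step — proving that the locally close polynomials genuinely patch into one \emph{global degree $d$} polynomial rather than something of larger degree; the entire point of the paper is that the degree is preserved, so this is where the real work lies, and it is driven by the quantitative gap $2\alpha < 2^{-d}$ together with the fact that we have $d+2$ independent hyperplanes (one more than the degree), which is exactly the slack that kills all higher-degree correction terms. The probabilistic estimate for ``few hyperplanes through $x$'' is the second-most delicate part but is a routine second-moment computation over the span of the $L_j$'s.
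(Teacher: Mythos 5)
Your overall plan --- consistency of the $P_i$ on pairwise intersections, sewing them into a single global polynomial $P$ of degree at most $d$, propagating the agreement $P|_{A_j} = P_j$ to all $K$ hyperplanes, and then combining a double-counting bound (giving the $\frac32\alpha$ term) with a second-moment bound on the points covered by fewer than $K/3$ hyperplanes (giving the $9/K$ term) --- is exactly the paper's, and those last two steps of your sketch are essentially right. The genuine gap is the sewing step, which you yourself flag as ``where the real work lies'' but never carry out. Your first attempt (extend $P_1$ to be independent of $x_1$ and check it against $A_2$) fails for the reason you suspect: the extension agrees with $P_2$ only on $A_1 \cap A_2$, i.e.\ on a $1/2$ fraction of $A_2$, and two degree-$d$ polynomials agreeing on half a flat need not coincide; indeed $P_2$ may genuinely depend on $x_1$ on $A_2$ while your extension does not. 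Your fallback, a ``Lagrange-style interpolation'' whose ``correction terms all vanish,'' is not an argument: a naive indicator-based interpolation raises the degree, and nothing in your sketch shows the higher-degree terms cancel. The paper's resolution is concrete and different in mechanism: after moving $\ell = \lfloor\log_2(K+1)\rfloor \geq d+1$ independent hyperplanes to $x_1=0,\ldots,x_\ell=0$, extend each $P_i$ to be independent of $x_i$, expand $P_i = \sum_{S \subseteq [\ell]} P_{i,S}(\vec{y})\prod_{j\in S}x_j$, use the intersection consistency (Claim~\ref{clm:intersect}) to show $P_{i,S}$ is independent of $i$ for $i \notin S$ (Claim~\ref{clm:agree}), and define $P$ from these common coefficients; the degree bound is then automatic since $\deg P_{i,S} \le d-|S|$, and $\ell \ge d+1$ guarantees that every $S$ with $|S|\le d$ has some $i\notin S$ supplying its coefficient.

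A secondary error: $K > 2^{d+1}$ does not give you $d+2$ independent hyperplanes. By Proposition~\ref{prop:hyperplane} you would need $2^{d+2}-1$ distinct hyperplanes for that; $K > 2^{d+1}$ only guarantees $d+1$. This matters because you lean on ``the degree budget $d+2$'' to kill the correction terms, whereas the paper's construction needs only $\ell \geq d+1$, both for the coefficient assignment above and for the step you gestured at last: $P$ and $P_i$ agree on $A_i \cap (\bigcup_{j\le \ell} A_j)$, a fraction $\geq 1-2^{-\ell} > 1-2^{-d}$ of $A_i$, which forces $P|_{A_i}=P_i|_{A_i}$ for every $i \in [K]$. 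Finally, in your bad-set estimate the dimension of the span of the $L_j$'s is irrelevant; what is needed is that a uniform point lies on each hyperplane with probability exactly $1/2$ and that membership events for distinct hyperplanes are pairwise independent except for complementary pairs, whose negative correlation only helps, after which Chebyshev at threshold $K/3$ gives exactly $9/K$.
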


\begin{proof}
Let $P_i$ be the degree $d$ polynomial such that
$f|_{A_i}$ is $\alpha$-close to $P_i$.

\begin{claim}
\label{clm:intersect}
If $4 \alpha < 2^{-d}$ then for every pair of
hyperplanes $A_i$ and $A_j$, we have $P_i|_{A_i \cap A_j}
= P_j|_{A_j \cap A_i}$.
\end{claim}

\begin{proof}
If $A_i$ and $A_j$ are complementary then this is vacuously
true. Otherwise, $|A_i \cap A_j| = |A_i|/2 = |A_j|/2$.
So $\delta(f|_{A_i \cap A_j}, P_i|_{A_i \cap A_j}) \leq
2 \delta(f|_{A_i},P_i) \leq 2\alpha$ and similarly
$\delta(f|_{A_i \cap A_j}, P_j|_{A_i \cap A_j}) \leq
2\alpha$. So $\delta(P_i|_{A_i \cap A_j}, P_j|_{A_i \cap A_j})
\leq 4 \alpha < 2^{-d}$. But these are both degree $d$
polynomials and so if their proximity is less than $2^{-d}$
then they must be identical.
\end{proof}

Let $\ell = \lfloor \log_2 (K+1) \rfloor$. Thus $\ell > d$.
By Proposition~\ref{prop:hyperplane} there are at least
$\ell$ linearly independent hyperplanes
among $A_1,\ldots,A_K$. Without loss of generality let these
be $A_1,\ldots,A_{\ell}$. Furthermore,
by an affine transformation of coordinates, for $i \in [\ell]$
let $A_i$ be the hyperplane $\{ x \in \F_2^n \mid x_i = 0\}$.
For $i \in [\ell]$ extend $P_i$ to a function
on all of $\F_2^n$ by making $P_i$ independent
of $x_i$.
We will sew together $P_1,\ldots,P_\ell$ to get a polynomial close to $f$.

Let us write all functions from $\F_2^n \to \F_2$
as polynomials in $n$ variables $x_1,\ldots,x_{\ell}$
and $\vec{y}$ where $\vec{y}$ denotes the last $n-\ell$
variables. For $i \in [\ell]$ and $S \subseteq [\ell]$,
let $P_{i,S}(\vec{y})$ be the monomials of $P_i$ which
contain $x_i$ for $i \in S$, and no $x_j$ for $j \not\in [\ell]$.
% of the  elements of $[\ell]$, contain exactly those in $S$ ,
That is, $P_{i,S}(\vec{y})$ are
polynomials such that
$P_i(x_1,\ldots,x_\ell,\vec{y}) =
\sum_{S \subseteq [\ell]} P_{i,S}(\vec{y}) \prod_{j \in S} x_j$.
Note that the degree of $P_{i,S}$ is at most $d - |S|$. (In particular,
if $|S| > d$, then $P_{i,S} = 0$.)
Note further that since $P_i$ is independent of $x_i$,
we have that $P_{i,S} = 0$ if $i \in S$.

\begin{claim}
\label{clm:agree}
For every $S \subseteq [\ell]$ and every $i,j \in [\ell] - S$, $P_
{i,S}(\vec{y}) = P_{j,S}(\vec{y})$.
\end{claim}

\begin{proof}
Note that
$P_i|_{A_i \cap A_j}(\vec x, \vec{y})
= \sum_{S \subseteq [\ell] - \{i,j\}}
P_{i,S}(\vec y) \prod_{m \in S} x_m$.
Similarly
$P_j|_{A_i \cap A_j}(\vec x, \vec{y})
= \sum_{S \subseteq [\ell] - \{i,j\}}
P_{j,S}(\vec y) \prod_{m \in S} x_m$.
Since the two functions are equal (by Claim~\ref{clm:intersect}),
we have that every pair of coefficients of $\prod_{m \in S} x_m$
must be the same. We conclude that $P_{i,S} = P_{j,S}$.
\end{proof}

Claim~\ref{clm:agree} above now allows us to define,
for every $S \subsetneq [\ell]$, the polynomial $P_S(\vec{y})$
as the unique polynomial $P_{i,S}$ where $i \not\in S$.
We define
$$P(x_1,\ldots,x_{\ell},\vec{y}) = \sum_{S \subsetneq [\ell]}
P_S(\vec{y}) \prod_{j \in S} x_j.$$
By construction, the degree of $P$ is at most $d$. This is the
polynomial that
we will eventually show is close to $f$.

\begin{claim}
For every $i \in [K]$,
$P|_{A_i} = P_i|_{A_i}$.
\end{claim}

\begin{proof}
First note that for each $i \in [\ell]$, $P|_{A_i} = P_i|_{A_i}$.
This is because the coefficients of the two polynomials become identical
after substituting $x_i = 0$ (recall that $A_i$ is the hyperplane
$\{x \in \F_2^n \mid x_i = 0\}$).

Now % we show that for each $i \in [K]$, $P|_{A_i} = P_i|_{A_i}$.
consider general $i \in [K]$.
For any point $x \in A_i \cap (\bigcup_{j=1}^\ell A_j)$,
letting $j^* \in [\ell]$ be such that $x \in A_{j^*}$, we have $P_i
(x) = P_{j*}(x)$ (by Claim~\ref{clm:intersect})
and $P_{j^*}(x) = P(x)$ (by what we just showed, since $j^* \in [\ell]
$).
Thus $P$ and $P_i$ agree on all points in $A_i \cap (\bigcup_{j=1}^
\ell A_j)$.
Now since $\ell > d$, we have that $|A_i \cap (\bigcup_{j=1}^\ell
A_j)|/|A_i| \geq 1 - 2^{-\ell} > 1 - 2^{-d}$,
and since $P|_{A_i}$ and $P_i|_{A_i}$ are both degree $d$
polynomials, we conclude that
$P|_{A_i}$ and $P_i|_{A_i}$ are identical.
Thus for all $i \in [K]$, $P|_{A_i}= P_i|_{A_i}$.
\end{proof}

We will show below that $P$ is close to $f$,
by considering all the hyperplanes $A_1,\ldots,A_K$.
If these hyperplanes uniformly covered $F_2^n$, then we could
conclude $\delta(f,P) \leq \alpha$,
as $f$ is $\alpha$-close to $P$ on each hyperplane.  Since the $A_i$
don't uniformly cover $\F_2^n$,
we'll argue that almost all points are covered approximately the
right number of times, which will be
good enough.  To this end, let
\[ \bad = \set{z \in \F_2^n | \hbox{$z$ is contained in less
than $K/3$ of the hyperplanes $A_1,\ldots,A_K$}}.\]
Let $\tau=|\bad|/2^n$.
\iffalse
Say that $z \in \F_2^n$ is \bad~if $z$ is contained in less
than $K/3$ of the hyperplanes $A_1,\ldots,A_K$.
Let $\tau$ be the fraction of \bad~points in $\F_2^n$.
\fi

\begin{claim}
$\delta(f,P) \leq 3/2 \cdot \alpha + \tau$.
\end{claim}

\begin{proof}
Consider the following experiment: Pick $z \in \F_2^n$
and $i \in [K]$ uniformly and independently at random
and consider the probability that ``$z \in A_i$ and $f(z)
\ne P_i(z)$''.
\ifnum\stoc=1  %%%%%%%%%%%%%%%%%
A simple analysis, using the fact that $P|_{A_i} = P_i$, shows
that
$$
\frac13 \cdot (\delta(f,P) - \tau) \leq
\Pr_{z,i}[z \in A_i ~\&~ f(z) \ne P_i(z) ]
 \leq  \frac{1}{2} \cdot \alpha.$$
Details omitted.
\else         %%%%%%%%%%%%%%%%%%%
On the one hand, we have
\begin{eqnarray*}
\lefteqn{\Pr_{z,i}[z \in A_i ~\&~ f(z) \ne P_i(z) ] } \\
&\leq & \max_i \Pr_z [ z \in A_i] \cdot \Pr_z [f(z) \ne P_i(z) | z
\in A_i]\\
& \leq & \frac{1}{2} \cdot \alpha
\end{eqnarray*}

On the other hand, using the fact that $P|_{A_i} = P_i$, we have
that

\begin{eqnarray*}
\lefteqn{\Pr_{z,i}[z \in A_i ~\&~ f(z) \ne P_i(z) ] } \\
& =  & \Pr_{z,i}[z \in A_i ~\&~ f(z) \ne P(z) ]  \\
& \geq  & \Pr_{z,i}[z \in A_i ~\&~ f(z) \ne P(z) ~\&~ z \not\in
\bad]  \\
& = & \Pr_{z}[f(z) \ne P(z) ~\&~ z \not\in \bad] \cdot
\Pr_{z,i} [z \in A_i | f(z) \ne P(z) ~\&~ z \not\in \bad ]  \\
& \geq & \Pr_{z}[f(z) \ne P(z) ~\&~ z \not\in \bad] \cdot
\min_{z: z \not\in \bad~\&~f(z) \ne P(z)} \Pr_{i} [z \in A_i]  \\
& \geq & (\delta(f,P) - \tau) \cdot
\min_{z: z \not\in \bad} \Pr_{i} [z \in A_i ]  \\
& \geq & (\delta(f,P) - \tau) \cdot \frac13
\end{eqnarray*}

We thus conclude that $(\delta(f,P) - \tau)/3 \leq \alpha/2$
yielding the claim.
\fi             %%%%%%%%%%%%%%%%
\end{proof}

\begin{claim}
$\tau \leq 9/K$.
\end{claim}

\begin{proof}
The proof is a straightforward ``pairwise independence''
argument, with a slight technicality to handle complementary
hyperplanes.

Consider a random variable $z$ distributed uniformly
over $\F_2^n$.  For $i \in [K]$, let $Y_i$ denote the random variable
that is $+1$ if $z \in A_i$ and $-1$ otherwise.
Note that $z \in \bad$ if and only if $\sum_{i} Y_i \leq - K/3$
and so $\tau = \Pr [ \sum_i Y_i \leq - K/3]$.
We now bound this probability.

For every $i$, note that $E[Y_i] = 0$ and $\Var[Y_i] = 1$.
Notice further that if $A_i$ and $A_j$ are not complementary
hyperplanes, then $Y_i$ and $Y_j$ are independent and
so $E[Y_i Y_j] = 0$, while if they are complementary,
then $E[Y_i Y_j] = -1 \leq 0$.
We conclude that $E[\sum_i Y_i] = 0$ and
$\Var[\sum_i Y_i] \leq K$.
Using Chebychev's bound, we conclude
that $\tau = \Pr [\sum_i Y_i \leq - K/3] \leq
\Var(\sum_i Z_i)/ (K^2/9) \leq 9/K$.
\end{proof}

The lemma follows from the last two claims above.
\end{proof}

\newcommand{\diffdimsection}{
\ifnum\stoc=0
\subsection{Lemma \ref{lem:ktod}: Relating different dimensional tests}
\else
\section{Lemma \ref{lem:ktod}: Relating different dimensional tests}
\fi

\begin{lemma}
\label{lem:basecase}
Let $k \geq d+1$ and
let $f:\F_2^{k+1} \to \F_2$ have
degree greater than $d$.
Then $\Rej_{d,k}(f) \geq 1/2$.
\end{lemma}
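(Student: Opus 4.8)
The plan is to use that when $n=k+1$ a $k$-flat is exactly a hyperplane, so $\Rej_{d,k}(f)$ is the fraction of the $2^{k+2}-2$ hyperplanes $A\subseteq\F_2^{k+1}$ with $\deg(f|_A)>d$. I would group the hyperplanes into the $2^{k+1}-1$ \emph{parallel pairs} $\{L=0\},\{L=1\}$, one for each nonzero linear functional $L$, and call a pair \emph{fully good} if both members have $\deg(f|_A)>d$, \emph{fully bad} if both have $\deg(f|_A)\le d$, and \emph{half good} otherwise. Since the number of good hyperplanes is $2\cdot\#\{\text{fully good}\}+\#\{\text{half good}\}$ and the three counts sum to $2^{k+1}-1$, one gets $\#\{\text{good}\}=2^{k+1}-1+\#\{\text{fully good}\}-\#\{\text{fully bad}\}$; hence it suffices to prove $\#\{\text{fully good}\}\ge\#\{\text{fully bad}\}$, which already gives $\Rej_{d,k}(f)\ge(2^{k+1}-1)/(2^{k+2}-2)=\tfrac12$.

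To analyze one parallel pair, choose coordinates so its linear part is $x_{k+1}$ and write $f=g(x_1,\dots,x_k)+x_{k+1}h(x_1,\dots,x_k)$, so the two restrictions are $g$ and $g+h$ and $\deg f=\max(\deg g,\,1+\deg h)$. A fully bad pair forces $\deg g\le d$ and $\deg(g+h)\le d$, hence $\deg h\le d$, hence $\deg f\le d+1$; so if $\deg f\ge d+2$ there are no fully bad pairs and we are done. Assume then $\deg f=d+1$. From the same decomposition: if $\deg g=d+1$ then $\deg h\le d$ and $\deg(g+h)=d+1$ (fully good), while if $\deg g\le d$ then $\deg h=d$ and $\deg(g+h)\le d$ (fully bad) — in particular there are no half good pairs. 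Moreover, writing $f=F+R$ with $F:=f^{(d+1)}\ne0$ homogeneous of degree $d+1$ and $\deg R\le d$, for any linear hyperplane $H$ through the origin we have $\deg(f|_H)=d+1$ iff $\deg(F|_H)=d+1$. Thus the target inequality becomes the following.

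\textbf{Core claim.} \emph{If $F:\F_2^m\to\F_2$ is a nonzero homogeneous polynomial of degree $e$ with $1\le e\le m-1$, then at least half of the $2^m-1$ linear hyperplanes $H\subseteq\F_2^m$ (through the origin) satisfy $\deg(F|_H)=e$.} I would prove this by induction on $m$ (the case $e=1$ being immediate, as $F|_H=0$ only for $H=\ker F$). For the base case $m=e+1$, write $F=\sum_{i=1}^m c_i\prod_{j\ne i}x_j$ with $(c_i)\ne0$; restricting to $H=\{\sum_{i\in T}x_i=0\}$ and eliminating one coordinate, the coefficient of the unique top monomial is $\sum_{i\in T}c_i$, so $\deg(F|_H)=e$ iff $\sum_{i\in T}c_i=1$, which holds for exactly $2^{m-1}$ of the $2^m$ sets $T$ — none of them empty — i.e.\ for more than half the hyperplanes. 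For the inductive step $m\ge e+2$, split off the last coordinate, $F=G(x_1,\dots,x_{m-1})+x_mG'(x_1,\dots,x_{m-1})$ with $G$ homogeneous of degree $e$ (or $0$), $G'$ homogeneous of degree $e-1$ (or $0$), not both zero, and partition the linear hyperplanes into the $2^{m-1}$ ``graphs'' $\{x_m=L'(x_1,\dots,x_{m-1})\}$ ($L'$ a homogeneous linear form, possibly $0$), where $F|_H=G+L'G'$, and the $2^{m-1}-1$ ``cylinders'' $\{L'(x_1,\dots,x_{m-1})=0\}$ ($L'\ne0$), where $\deg(F|_H)=\max(\deg(G|_{H'}),\,1+\deg(G'|_{H'}))$ for the induced hyperplane $H'\subseteq\F_2^{m-1}$. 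On the cylinders, the inductive hypothesis applied to $G$ (if $G\ne0$) or to $G'$ (if $G=0$, so $G'\ne0$) makes at least $\tfrac12(2^{m-1}-1)$ of them good. On the graphs, the degree-$e$ part of $G+L'G'$ is $G+\Lambda(L')$, where $L'\mapsto\Lambda(L')$ — the degree-$e$ part of $L'G'$ — is $\F_2$-linear, so the number of graph-hyperplanes with $\deg(F|_H)<e$ is at most $|\ker\Lambda|$; and $\Lambda\ne0$ whenever $G'\ne0$ (pick a monomial $x^S$ of $G'$ and a variable $x_i\notin S$, which exists since $|S|=e-1<m-1$; then $x_iG'$ has nonzero degree-$e$ part, containing $x_ix^S$), so $|\ker\Lambda|\le2^{m-2}$, leaving at least $2^{m-2}$ good graph-hyperplanes (all $2^{m-1}$ of them when $G'=0$). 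Summing, at least $2^{m-2}+\tfrac12(2^{m-1}-1)=\tfrac12(2^m-1)$ linear hyperplanes are good, closing the induction.

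The parallel-pairing reduction, the $\deg f\ge d+2$ case, and the base case of the Core claim are all routine linear algebra over $\F_2$; the one genuinely delicate point — and the step I would expect to be the main obstacle — is the inductive step of the Core claim, precisely because over $\F_2$ the degree of a polynomial can collapse under linear substitutions ($x_i^2=x_i$), which is exactly why one must first pass to the homogeneous part $F$ and then control the linear map $\Lambda$ rather than reasoning naively with monomial supports.
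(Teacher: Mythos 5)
Your proposal is correct, but it takes a genuinely different route from the paper's proof of Lemma~\ref{lem:basecase}. The paper argues by contradiction: if a strict majority of hyperplanes had low-degree restrictions, some complementary pair would both be low-degree, forcing $\deg f = d+1$ by interpolation; then the more than $2^{k+1}-1$ low-degree hyperplanes contain at least $k+1 \geq d+2$ linearly independent ones (part 2 of Proposition~\ref{prop:hyperplane}), and after an affine change of coordinates each of these forces the top homogeneous part $P_h$ of $f$ to be divisible by a distinct coordinate, so $x_1\cdots x_{d+2}$ divides $P_h$ --- contradicting $\deg P_h = d+1$. You share the first observation (your ``fully bad pair implies $\deg f \le d+1$'' is exactly the paper's complementary-pair interpolation), but instead of the independence-plus-divisibility contradiction you carry out an exact count over parallel pairs, reduce to the top homogeneous part on the linear member of each pair, and prove a standalone statement --- that a nonzero homogeneous form of degree $e \le m-1$ on $\F_2^m$ retains degree $e$ on at least half of the linear hyperplanes --- by induction on $m$, splitting hyperplanes into ``graphs'' $\{x_m = L'\}$, controlled via the linear map $L' \mapsto \Lambda(L')$, and ``cylinders'' $\{L'=0\}$, controlled by the inductive hypothesis. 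I checked the delicate points (the degree-$e$ part of the multilinearization of $L'G'$ is indeed linear in $L'$ and nonzero when $G' \ne 0$; the parameter constraints $e \le m-2$, respectively $1 \le e-1 \le m-2$, needed to invoke the inductive hypothesis do hold; the counts $2^{m-2} + \tfrac12(2^{m-1}-1) = \tfrac12(2^m-1)$ add up), and the argument goes through. The trade-off: the paper's proof is considerably shorter and leans on the generic ``many hyperplanes contain many independent hyperplanes'' trick, while yours is longer but produces, as a by-product, a quantitative fact about homogeneous polynomials under linear restrictions (your Core Claim) that has independent interest and entirely avoids the independence/divisibility step; your exact identity $\#\{\text{good}\} = 2^{k+1}-1+\#\{\text{fully good}\}-\#\{\text{fully bad}\}$ also makes transparent exactly when the bound $1/2$ is tight.
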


\begin{proof}
Assume for contradiction that there is a strict majority of
hyperplanes $A$ on which $f|_A$ has degree
$d$. Then there exists two complementary hyperplanes
$A$ and $\bar{A}$ such that $f|_A$ and $f|_{\bar{A}}$
both have degree $d$. We can
interpolate a polynomial $P$ of degree at most
$d+1$
that now equals $f$ everywhere. If
$P$ is of degree $d$, we are done, so
assume $P$ has degree exactly $d+1$ and
let $P_h$ be the homogenous degree $d+1$ part
of $P$ (i.e,, $P = P_h + Q$ where $\deg(Q) \leq d$
and $P_h$ is homogenous).
Now consider all hyperplanes~$A$ such that $f|_A= P|_A$
has degree at most $d$. Since these form a strict majority,
there are at least
$\frac12 (2^{k+2} - 2) + 1 > 2^{k+1}-1$ such hyperplanes.
It follows that there are at least
$k+1 \geq d+2$ linearly independent hyperplanes such that
this condition holds. By an affine transformation
we can assume these hyperplanes are of the
form $x_1 = 0, \ldots, x_{d+2} = 0$.
But then $\prod_{i=1}^{d+2} x_i$ divides
$P_h$ which contradicts the fact that the degree of
$P_h$ is at most $d+1$.
\end{proof}

\begin{lemma}
\label{lem:exp}
Let $n \geq k \geq d+1$ and
let $f:\F_2^n \to \F_2$ have
degree greater than $d$.
Then $\Rej_{d,k}(f) \geq 2^{k-n}$.
\end{lemma}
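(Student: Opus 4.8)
The plan is to reduce the general statement to the base case $n = k+1$ already established in Lemma~\ref{lem:basecase}, by a downward induction on $n$. The key observation is the same ``random flat of a random hyperplane is a random flat'' identity used in the proof of Lemma~\ref{lem:induction}: for any $f : \F_2^n \to \F_2$ with $n > k$, we have $\Rej_{d,k}(f) = \mathbb{E}_{A \in \mathcal H}[\Rej_{d,k}(f|_A)]$, where $A$ ranges over all hyperplanes of $\F_2^n$. So it suffices to lower bound, on average, the rejection probability of the restrictions $f|_A$.

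First I would handle $n = k+1$: this is exactly Lemma~\ref{lem:basecase}, which gives $\Rej_{d,k}(f) \geq 1/2 = 2^{k-n}$. For the inductive step, assume $n > k+1$ and that the claim holds for $n-1$. Since $\deg(f) > d$, I claim that at least one hyperplane $A$ has $\deg(f|_A) > d$: indeed, if $f|_A$ had degree $\leq d$ on every hyperplane $x_i = 0$ and every hyperplane $x_i = 1$ for all $i$, then writing $f = \sum_S c_S(\vec{x}_{[n]\setminus i}) x_i$ type decompositions (or more simply, using that the highest-degree monomial of $f$ involves some variable $x_i$, so the coefficient of that monomial survives in the restriction to $x_i = 1$ minus $x_i = 0$, forcing one of those two restrictions to still have a degree-$(>d)$ part) — so some restriction $f|_A$ still has degree $> d$. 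Actually the cleanest route: if $\deg(f|_{x_i=0}) \le d$ and $\deg(f|_{x_i=1}) \le d$ for every $i$, then every monomial of $f$ of degree $> d$ must contain every variable $x_i$, which is impossible for $n > d$ unless that monomial has degree $n > k > d$; but a monomial of degree exactly $n$ restricted to $x_1 = 0$ vanishes, so $f|_{x_1 = 0}$ would have degree $> d$ only via other monomials — iterating, one gets a contradiction. So at least one hyperplane $A^*$ has $\deg(f|_{A^*}) > d$, and by induction $\Rej_{d,k}(f|_{A^*}) \geq 2^{k-(n-1)}$.

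Now combine: $\Rej_{d,k}(f) = \mathbb{E}_{A}[\Rej_{d,k}(f|_A)] \geq \frac{1}{N}\Rej_{d,k}(f|_{A^*}) \geq \frac{1}{2^{n+1}-2} \cdot 2^{k-n+1}$, where $N = 2^{n+1}-2$ is the number of hyperplanes. This is roughly $2^{k-2n}$, which is far too weak — we want $2^{k-n}$, not $2^{k-2n}$. So the single-hyperplane argument does not suffice, and the real content must be that a large fraction of hyperplanes have $\deg(f|_A) > d$. The main obstacle, and the step to get right, is therefore a counting statement: if $\deg(f) > d$ then the number of hyperplanes $A$ with $\deg(f|_A) > d$ is at least a $2^{k-n}$-ish fraction — or better, one should argue directly. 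The clean way is: let $m$ be minimal such that $f$ (after an affine change of coordinates isolating the relevant monomial) has a degree-$(>d)$ monomial supported on the first $m$ coordinates; among the $2^{m+1}-2$ hyperplanes depending only on those coordinates, a positive fraction keep degree $> d$, and by the induction/base case we recover the bound. Alternatively — and this is probably what the paper does — one proves it by a direct induction matching powers of two exactly: restricting to a single coordinate hyperplane $x_n = 0$, either $\deg(f|_{x_n=0}) > d$ (apply induction to get $2^{k-(n-1)} \ge 2^{k-n} \cdot 2$, and this hyperplane alone contributes $\geq \frac{1}{2}2^{k-n+1} \cdot \frac{1}{\text{(something)}}$...) — the bookkeeping here is exactly the delicate part. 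I expect the intended proof simply iterates Lemma~\ref{lem:basecase} down from $n$ to $k+1$ one dimension at a time, at each stage passing to a hyperplane on which the degree stays $> d$ and tracking that the rejection probability of a random $k$-flat inside that chain of nested subspaces is at least $2^{-(n-k-1)} \cdot \frac12 = 2^{k-n}$, since each of the $n-k-1$ restriction steps costs a factor of at most $2$ (the probability that a random hyperplane is the ``good'' one in the chain) and the final step costs the factor $\frac12$ from Lemma~\ref{lem:basecase}. Verifying that ``cost a factor of at most $2$ per step'' claim rigorously — equivalently, that $\Rej_{d,k}(f) \geq \frac12 \Rej_{d,k}(f|_{A})$ for a well-chosen hyperplane $A$ — is the crux, and follows because a uniformly random $k$-flat of $\F_2^n$ lands inside any fixed hyperplane with probability very close to (in fact at least) $2^{-1}$... no: with probability about $2^{-(n-k)}$. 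Hence the honest statement is $\Rej_{d,k}(f) \ge 2^{-(n-k)}\Rej_{d,k}(f|_A) \cdot (\text{number of good flats factors})$, and one must sum over all $2^{k-n+1}$-ish good sub-flats rather than just one; making that sum work out to exactly $2^{k-n}$ is the one computation I would do carefully.
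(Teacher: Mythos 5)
Your proposal has the right skeleton (the averaging identity $\Rej_{d,k}(f) = \mathbb{E}_{A}[\Rej_{d,k}(f|_A)]$ plus induction on the ambient dimension), but it is missing the one fact that makes the induction go through, and as written it does not yield the bound $2^{k-n}$. You correctly observe that exhibiting a single hyperplane $A^*$ with $\deg(f|_{A^*})>d$ only gives a $1/(2^{n+1}-2)$ factor, and you correctly guess that ``a large fraction of hyperplanes must preserve degree $>d$'' is the real content --- but you never prove such a statement, and the various fallback sketches (a chain of nested subspaces with a ``factor of $2$ per step'', or probabilities that a random $k$-flat lands in a fixed hyperplane) do not cohere into an argument, as you yourself note at the end.

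The missing idea is that Lemma~\ref{lem:basecase} is not only the base case: it is exactly the quantitative ``many good hyperplanes'' statement, applied afresh at every dimension. If $f:\F_2^n\to\F_2$ has degree greater than $d$ and $n-1\ge d+1$ (which holds throughout the inductive step, since $n>k\ge d+1$), then Lemma~\ref{lem:basecase} with flat dimension $n-1$ says $\Rej_{d,n-1}(f)\ge 1/2$, i.e.\ at least half of all hyperplanes $A$ satisfy $\deg(f|_A)>d$. Combining this with the induction hypothesis in dimension $n-1$ and the averaging identity gives
$$\Rej_{d,k}(f) \;=\; \mathbb{E}_{A}\bigl[\Rej_{d,k}(f|_A)\bigr] \;\ge\; \tfrac12\cdot 2^{k-(n-1)} \;=\; 2^{k-n},$$
which is the paper's proof (with the trivial base case $n=k$, where the rejection probability is $1$). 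So the gap is concrete: without a lower bound of $1/2$ on the fraction of degree-preserving hyperplanes --- which you would have to prove, and which is precisely the content of Lemma~\ref{lem:basecase} reused at ambient dimension $n$ --- your induction loses an exponential factor and cannot be repaired by bookkeeping over chains of subspaces.
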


\begin{proof}
The proof is a simple induction on $n$.
The base case of $n=k$ is trivial.  Now assume for $n-1$.
Pick a random hyperplane $A$. With
probability at least $1/2$ (by the previous
lemma), $f|_A$ is not a degree $d$ polynomial.
By the inductive hypothesis,
a random $k$-flat of $A$ will now detect that $f|_A$ is
not
of degree $d$ with probability $2^{k-n+1}$.
We conclude that a random $k$-flat of $\F_2^n$ yields a function of
degree greater than $d$ with probability at
least $2^{k-n}$.
\end{proof}

%If $k\geq d+c$, then again the theorem follows immediately, now from
%Lemma~\ref{lem:induction}. So we further assume $k < d+c = k'$.

We now have all the pieces needed to prove Lemma \ref{lem:ktod}.

\noindent {\bf Lemma~\ref{lem:ktod} (recalled):}
{\em For every $n, d$ and $k \geq k' \geq d+1$, and every $f: \F_2^n
\rightarrow \F_2$, we have
$$ \Rej_{d, k'}(f) \geq \Rej_{d, k}(f) \cdot 2^{-(k-k')}.$$}

\begin{proofof}{Lemma~\ref{lem:ktod}}
We view the $k'$-flat test as the following process:
first pick a random $k$-flat $A_1$ of $\F_2^n$, then pick a
random $k'$-flat $A$ of $A_1$, and accept iff $f|_A$ is
a degree $d$ polynomial. Note that this is completely
equivalent to the $k'$-flat test.

To analyze our test, we first consider the event that
$f|_{A_1}$ is not a degree $d$ polynomial.
%By Lemma~\ref{lem:induction}
The probability that this happens is $\Rej_{d, k}(f)$.
Now conditioned on the event that
$f|_{A_1}$ is not a degree $d$ polynomial, we can now use
Lemma~\ref{lem:exp} to conclude that the probability that
$(f|_{A_1})|_A$ is not a degree $d$ polynomial is at least
$2^{-(k-k')}$.
We conclude that $\Rej_{d, k'}(f) \geq \Rej_{d,k}(f) \cdot 2^{-(k-k')}$.
The lemma follows.
\end{proofof}
}

\ifnum\stoc=0
\diffdimsection
\fi

\newcommand{\gowerssection}{
\section{Gowers norms}
\label{sec:gowers}

Our main theorem can be interpreted as giving a tight
relationship between the Gowers norm of a function $f$
and its proximity to some low degree polynomial. In this
section we describe this relationship.

We start by recalling the definition of the test $\tgowersk^{f}$ and the
Gowers norm $\| f \|_{U^k}$. On oracle access to function $f$,
the test $\tgowersk$ picks $x_0$ and directions $a_1,\ldots,a_k$
uniformly and independently in $\F_2^n$ and accepts if and only
if $f|_A$ is a degree $k-1$ polynomial, where $A = \{x_0 +
\mspan(a_1,\ldots,a_k)\}$.
The Gowers norm is given by the expression
$$ \| f \|_{U^k} \eqdef (\Pr[\hbox{$\tgowersk^{f}$ accepts}]  -
\Pr[\hbox{$\tgowersk^f$ rejects}])^{\frac{1}{2^k}}.$$

Our main quantity of interest is the correlation of
$f$ with degree $d$ polynomials, i.e., the quantity
$1 - 2\delta_d(f)$.

Our theorem relating the Gowers norm to the correlation
is given below.

\begin{theorem}\label{thm:tightgowers}
There exists $\epsilon>0$ such that
if $\| f \|_{U^{d+1}} \geq 1-\epsilon/2^d$, then $\delta_d(f) = \Theta
(1-\| f \|_{U^{d+1}})$.
\end{theorem}

To prove the theorem we first relate the rejection
probability of the test $T_{{\rm GN}(d+1)}$ with that of the test
$\tgowers$.

\begin{proposition}
\label{prop:relatetests}
For every $n \geq d+1$ and for every $f$,
$\Pr [T_{{\rm GN}(d+1)}^f {\rm rejects}] \geq \frac14 \cdot
\Pr [\tgowers^f{\rm rejects}]$.
\end{proposition}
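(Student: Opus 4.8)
The plan is to compare the two tests by viewing $T_{\rm GN}(d+1)$ as a ``noisy'' version of $\tgowers = T_{d,d+1}$: both pick a random point $x_0$ and $d+1$ directions $a_1,\ldots,a_{d+1}$, but $\tgowers$ implicitly conditions on the directions being linearly independent, whereas $T_{\rm GN}(d+1)$ allows arbitrary directions. So the first step is to write $\Pr[T_{\rm GN}(d+1)^f \text{ rejects}]$ as a sum over the event $\mathcal{I}$ that $a_1,\ldots,a_{d+1}$ are linearly independent and its complement, and observe that conditioned on $\mathcal{I}$ the spanned flat is exactly a uniformly random $(d+1)$-flat, so $\Pr[T_{\rm GN}(d+1)^f \text{ rejects} \mid \mathcal{I}] = \Rej_{d,d+1}(f) = \Pr[\tgowers^f \text{ rejects}]$. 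Since $n \geq d+1$, the probability $\Pr[\mathcal{I}]$ that $d+1$ uniform vectors in $\F_2^n$ are independent is $\prod_{i=0}^{d}(1 - 2^{i-n}) \geq \prod_{i\geq 1}(1-2^{-i}) > 1/4$ (the infinite product is Euler's constant $\approx 0.288$). Because rejection probabilities are nonnegative, dropping the contribution from $\neg\mathcal{I}$ only loses, so $\Pr[T_{\rm GN}(d+1)^f \text{ rejects}] \geq \Pr[\mathcal{I}]\cdot\Pr[\tgowers^f \text{ rejects}] \geq \frac14\Pr[\tgowers^f \text{ rejects}]$.

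There is one subtlety to nail down: the test $\tgowers$ as originally defined picks $d+1$ linearly independent directions, whereas $T_{d,d+1}$ picks a uniformly random $(d+1)$-flat; I should note (as the excerpt already does, identifying $\tgowers = T_{d,d+1}$) that these coincide in distribution, since a uniformly random ordered linearly independent $(d+1)$-tuple together with a uniform offset yields a uniformly random $(d+1)$-flat, and the rejection event depends only on the flat. A second point worth a sentence: when $a_1,\ldots,a_{d+1}$ are dependent, the affine ``flat'' $A = x_0 + \mspan(a_1,\ldots,a_{d+1})$ has dimension $< d+1$, and $f|_A$ is automatically a degree-$\leq d$ polynomial when $\dim A \leq d$ — so in fact $T_{\rm GN}(d+1)$ can only reject on the event $\mathcal{I}$, which means the inequality is actually an equality $\Pr[T_{\rm GN}(d+1)^f\text{ rejects}] = \Pr[\mathcal{I}]\cdot\Rej_{d,d+1}(f)$; but we only need the one-sided bound, so I would not belabor this.

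I do not expect any real obstacle here; the ``hard part'' is merely bookkeeping — correctly lower-bounding $\Pr[\mathcal{I}]$ by a constant exceeding $1/4$ and being careful that the conditional distribution of the flat given independence is genuinely uniform over $(d+1)$-flats. Concretely I would carry out the steps in this order: (1) recall that $T_{\rm GN}(d+1)$ and $\tgowers=T_{d,d+1}$ generate a flat $A$ and reject iff $\deg(f|_A) > d$; (2) condition on $\mathcal{I} = \{a_1,\ldots,a_{d+1}\text{ independent}\}$ and argue the conditional flat is uniform over $(d+1)$-flats, giving conditional rejection probability exactly $\Rej_{d,d+1}(f)$; (3) compute $\Pr[\mathcal{I}] = \prod_{i=0}^{d}(1-2^{i-n}) \geq \prod_{i=1}^{\infty}(1-2^{-i}) > 1/4$, using $n \geq d+1$; (4) conclude $\Pr[T_{\rm GN}(d+1)^f\text{ rejects}] \geq \Pr[\mathcal{I}]\cdot\Rej_{d,d+1}(f) > \frac14\Pr[\tgowers^f\text{ rejects}]$.
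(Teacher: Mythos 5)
Your proposal is correct and follows essentially the same route as the paper: condition on the event that $a_1,\ldots,a_{d+1}$ are linearly independent, note that the conditional flat is a uniform $(d+1)$-flat so the conditional rejection probability is exactly that of $\tgowers$, and lower bound the independence probability by $1/4$ (the test accepting automatically on degenerate flats). The only difference is cosmetic: you bound $\Pr[\mathcal I]$ by the exact product $\prod_{i=0}^{d}(1-2^{i-n}) \geq \prod_{j\geq 1}(1-2^{-j}) \approx 0.289$, while the paper uses a cruder two-step union bound (first $d$ directions independent with probability $\geq 1/2$, last direction outside their span with probability $\geq 1/2$); both give the stated constant $1/4$.
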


\begin{proof}
We show that with probability at least 1/4, the $a_i$ are linearly
independent.
Consider picking $d$ independent vectors
$a_1,\ldots,a_d$
in $\F_2^n$. For fixed $\beta_1,\ldots,\beta_d \in \F_2$
(not all zero), the probability that $\sum_i \beta_i a_i = 0$
is at most $2^{-n}$. Taking the union bound over all sequences
$\beta_1,\ldots,\beta_d$ we find that the probability that
$a_1,\ldots,a_d$ have a linear dependency is at most $2^{d-n}
\geq \frac12$ if $n \geq d+1$. For any fixed $a_1,\ldots,a_d$,
the probability that $a_{d+1} \in \mspan(a_1,\ldots,a_d)$ is also
at most $\frac12$. Thus we find with probability at least
$1/4$, the vectors $a_1,\ldots,a_{d+1}$ are linearly independent
provided $n \geq d+1$.
The proposition follows since the rejection probability of
$T_{{\rm GN}(d+1)}^f$ equals the rejection probability of $\tgowers^f$ times
the probability that $a_1,\ldots,a_{d+1}$ are linearly
independent.
\end{proof}

We are now ready to prove Theorem~\ref{thm:tightgowers}.

\begin{proofof}{Theorem~\ref{thm:tightgowers}}
The proof is straightforward given our main theorem and
the work of Gowers et al.~\cite{Gow01, GT05}.
As mentioned earlier, Gowers already showed that
$1-2\delta_d(f) \leq \| f \|_{U^{d+1}}$ \cite{Gow01, GT05}, i.e.,
$\delta_d(f) \geq (1-\| f \|_{U^{d+1}})/2$.

For the other direction, suppose $\| f \|_{U^{d+1}} = 1-\gamma$,
where $\gamma \leq \epsilon/2^d$ for small enough $\epsilon$.
Let $\rho$ denote the rejection probability of $T_{{\rm GN}(d+1)}^f$.
By Proposition~\ref{prop:relatetests} we have $\rho \geq \frac14\cdot
\Rej_{d,d+1}(f)$.
By choosing $\epsilon$ small enough, we also have
$1-2\rho = \| f \|_{U^{d+1}}^{2^{d+1}} > 1-\epsilon_1/2$, i.e.,
$\rho < \epsilon_1/4$, so $\Rej_{d,d+1}(f) < \epsilon_1$.
Thus, by Theorem~\ref{thm:main},
\begin{eqnarray*}
\delta_d(f) &\leq& \frac{1}{2^d}\Rej_{d,d+1}(f) (f)\\
&\leq& \frac{1}{2^{d-2}}\rho\\
&=& \frac{1}{2^{d-1}} (1-\| f \|_{U^{d+1}}^{2^{d+1}})\\
&=& \frac{1}{2^{d-1}} (1-(1-\gamma)^{2^{d+1}})\\
&\leq& \frac{1}{2^{d-1}} (1-(1-O(2^{d+1} \gamma)))\\
&=& O(\gamma),
\end{eqnarray*}
as required.
\end{proofof}

}

\ifnum\stoc=0
\gowerssection
\fi

\newcommand{\xorsection}{%
\section{\label{sec:apps}XOR lemma for low-degree polynomials}
\label{sec:xor}
A crucial feature of the test $\tgowersk$
(that is not
a feature of the $k$-flat test for $k > d+1$) is
that the rejection probability of $f^{\oplus t}$ can be exactly
expressed as a
rapidly growing (in $t$) function of the rejection probability of $f$.
Let $\Rej^0_d(f)$ denote the rejection probability of $T_{{\rm GN}(d+1)}^f$.
Then we have:

\begin{proposition}
\label{prop:power}
$$(1- 2\Rej^0_d(f^{\oplus t})) = (1 - 2 \Rej^0_d(f) )^t.$$
\end{proposition}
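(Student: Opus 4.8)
\begin{proofof}{Proposition~\ref{prop:power}}
The plan is to re-express the quantity $1-2\Rej^0_d(g)$ for an arbitrary $g:\F_2^m\to\F_2$ as a ``$2^{d+1}$-fold correlation'' expectation, and then observe that for $g=f^{\oplus t}$ this expectation factors across the $t$ coordinate blocks.

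First I would establish the identity
$$1-2\Rej^0_d(g) \;=\; \mathbb E_{x_0,a_1,\ldots,a_{d+1}}\Bigl[\ \prod_{S\subseteq[d+1]}(-1)^{\,g\bigl(x_0+\sum_{i\in S}a_i\bigr)}\ \Bigr],$$
where $x_0,a_1,\ldots,a_{d+1}$ are uniform and independent in $\F_2^m$. By definition $1-2\Rej^0_d(g)=\Pr[T_{{\rm GN}(d+1)}^g\text{ accepts}]-\Pr[T_{{\rm GN}(d+1)}^g\text{ rejects}]=\mathbb E[(-1)^{\sigma}]$, where $\sigma\in\F_2$ is $0$ exactly when the test accepts. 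So it suffices to argue that $T_{{\rm GN}(d+1)}^g$ accepts if and only if $\sum_{S\subseteq[d+1]}g(x_0+\sum_{i\in S}a_i)=0$ in $\F_2$ (which makes $(-1)^\sigma$ equal to the displayed product). If $a_1,\ldots,a_{d+1}$ are linearly independent, the affine cube $A$ has $2^{d+1}$ points and the displayed sum is precisely the $(d+1)$-st order finite difference of $g$ along $A$, which vanishes iff $g|_A$ has degree at most $d$. If $a_1,\ldots,a_{d+1}$ are dependent, then $A$ has $2^{j}<2^{d+1}$ points, each hit $2^{d+1-j}$ times (an even number) as $S$ ranges over subsets, so the formal sum vanishes mod $2$; and simultaneously $g|_A$, being a function on an affine space of dimension $j\le d$, automatically has degree at most $d$, so the test accepts. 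Thus the two conditions agree in all cases.

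Now apply this with $g=f^{\oplus t}$ on $(\F_2^n)^t$. Write each random choice in its coordinate blocks: $x_0=(x_0^{(1)},\ldots,x_0^{(t)})$ and $a_i=(a_i^{(1)},\ldots,a_i^{(t)})$ for $i\in[d+1]$, where all of $\{x_0^{(j)}\}_j\cup\{a_i^{(j)}\}_{i,j}$ are independent and uniform in $\F_2^n$. Since $f^{\oplus t}(y^{(1)},\ldots,y^{(t)})=\sum_{j=1}^t f(y^{(j)})$, for each $S\subseteq[d+1]$ we get
$$(-1)^{\,f^{\oplus t}(x_0+\sum_{i\in S}a_i)}=\prod_{j=1}^t(-1)^{\,f(x_0^{(j)}+\sum_{i\in S}a_i^{(j)})},$$
so the full product over $S$ rearranges into $\prod_{j=1}^t\prod_{S\subseteq[d+1]}(-1)^{f(x_0^{(j)}+\sum_{i\in S}a_i^{(j)})}$. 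The $j$-th factor depends only on the $j$-th block of random variables, and distinct blocks are independent, so the expectation of the product is the product of the expectations:
$$1-2\Rej^0_d(f^{\oplus t})=\prod_{j=1}^t\mathbb E\Bigl[\ \prod_{S\subseteq[d+1]}(-1)^{f(x_0^{(j)}+\sum_{i\in S}a_i^{(j)})}\ \Bigr]=\bigl(1-2\Rej^0_d(f)\bigr)^t,$$
using the identity from the first step (once for $f^{\oplus t}$, once for each copy of $f$). This is exactly the claim.

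There is no serious obstacle here; the only point requiring care is the first step, namely that ``the test accepts'' coincides with ``the formal $2^{d+1}$-term cube sum is $0$'' even when the sampled directions are linearly dependent, which is handled by the even-multiplicity observation above. Everything else is the multiplicativity of expectation over independent blocks.
\end{proofof}
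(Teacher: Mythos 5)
Your proof is correct, and it is worth noting how it relates to the paper's own argument. The paper disposes of this proposition in three lines: it observes that the claim is equivalent to the multiplicativity $\| f^{\oplus t} \|_{U^{d+1}} = \| f \|_{U^{d+1}}^t$ (since by the paper's definition $1-2\Rej^0_d(g) = \|g\|_{U^{d+1}}^{2^{d+1}}$), and then cites the standard fact (Fact 2.6 in \cite{VW}) that the Gowers norm of a sum of functions on disjoint sets of variables is the product of their Gowers norms. What you have done is prove that standard fact from scratch: you first identify $1-2\Rej^0_d(g)$ with the cube-average expectation $\mathbb E\bigl[\prod_{S\subseteq[d+1]}(-1)^{g(x_0+\sum_{i\in S}a_i)}\bigr]$ --- taking care of the degenerate case of linearly dependent directions via the even-multiplicity observation, which is exactly the point needed to reconcile the paper's test-based definition of the norm with the usual cube-average definition --- and then factor the expectation over the $t$ independent coordinate blocks. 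So the two arguments rest on the same mathematical content (factorization of the cube average over independent blocks of variables), but yours is self-contained where the paper's is by citation, and your first step makes explicit a definitional equivalence that the paper leaves implicit. The degenerate-directions analysis is handled correctly: when the $a_i$ span a subspace of dimension $j\le d$, each point of the flat is hit an even number $2^{d+1-j}$ of times so the $\F_2$-sum vanishes, and simultaneously the restriction to a $j$-dimensional flat trivially has degree at most $d$, so both sides of your equivalence read ``accept.''
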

\begin{proof}
We first note that the proposition is equivalent to showing
that $\| f^{\oplus t} \|_{U^{d+1}} =
\left(\| f \|_{U^{d+1}}\right)^t$.
It is a standard fact (e.g., Fact 2.6 in \cite{VW}) that for
functions $f, g$
on disjoint sets of inputs, $\| f(x) + g(y) \|_{U^{d+1}} =
\| f(x) \|_{U^{d+1}} \cdot \| g(y) \|_{U^{d+1}}$.
This immediately yields the proposition.
\end{proof}

We also use the following well-known relationship between
the Gowers norm and the correlation of a function to the
class of degree $d$ polynomials. (We state it in terms of
the rejection probability of the test $T_{{\rm GN}(d+1)}$.)

\begin{lemma}[\cite{Gow01, GT05}]
\label{lem:gowers}
$$1 - 2\delta_d(g) \leq (1- 2 \Rej^0_d(g))^{\frac{1}{2^d}}.$$
\end{lemma}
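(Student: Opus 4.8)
The plan is to unwind the definitions until the statement becomes the classical Gowers--Cauchy--Schwarz (``generalized von Neumann'') inequality, and then to prove that inequality by induction on $d$. Write $F = (-1)^g : \F_2^n \to \{-1,+1\}$. On a flat $A = x_0 + \mspan(a_1,\dots,a_{d+1})$, set $\tilde g(c) = g(x_0 + \sum_i c_i a_i)$ for $c\in\F_2^{d+1}$; then $T_{{\rm GN}(d+1)}$ accepts on $A$ if and only if $\tilde g$ has degree at most $d$, equivalently the coefficient of $c_1\cdots c_{d+1}$ in (the multilinear form) $\tilde g$ is zero, equivalently $\sum_{S\subseteq[d+1]} g(x_0 + \sum_{i\in S}a_i) = 0$, equivalently $\prod_{S\subseteq[d+1]} F(x_0 + \sum_{i\in S}a_i) = +1$. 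Hence the accept-minus-reject bias of $T_{{\rm GN}(d+1)}^g$ equals $\mathbb{E}_{x_0,a_1,\dots,a_{d+1}}\big[\prod_{S\subseteq[d+1]}F(x_0+\sum_{i\in S}a_i)\big] = \|F\|_{U^{d+1}}^{2^{d+1}}$ (a nonnegative real, as is standard), so $1-2\Rej^0_d(g) = \|F\|_{U^{d+1}}^{2^{d+1}}$ and, after taking $2^{d+1}$-st roots, the claimed inequality reads $1-2\delta_d(g) \le \|F\|_{U^{d+1}}$. Finally, choosing a polynomial $P$ of degree at most $d$ with $\delta(g,P) = \delta_d(g)$ (and using $\delta_d(g) \le \frac12$), we get $1-2\delta_d(g) = \mathbb{E}_x[(-1)^{g(x)+P(x)}] = \mathbb{E}_x[F(x)(-1)^{P(x)}]$. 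Thus it suffices to show: for every real-valued $F$ with $|F|\le 1$ and every polynomial $P$ of degree at most $d$,
\[ \big|\mathbb{E}_x[F(x)(-1)^{P(x)}]\big| \;\le\; \|F\|_{U^{d+1}}. \]

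I would prove this by induction on $d$. For $d=0$, $(-1)^P$ is a constant sign, so the left-hand side equals $|\mathbb{E}_x F(x)| = \|F\|_{U^1}$. For the inductive step, use the fact motivating the whole test---a directional derivative lowers degree: for every $h\in\F_2^n$, $Q_h(x) := P(x)+P(x+h)$ has degree at most $d-1$, since the top homogeneous part of $P$ is shift-invariant and cancels. Squaring the left-hand side and substituting $y = x+h$,
\[ \big|\mathbb{E}_x[F(x)(-1)^{P(x)}]\big|^2 \;=\; \mathbb{E}_h\,\mathbb{E}_x\big[F_h(x)(-1)^{Q_h(x)}\big] \;\le\; \mathbb{E}_h\Big[\,\big|\mathbb{E}_x[F_h(x)(-1)^{Q_h(x)}]\big|\,\Big], \]
where $F_h(x) := F(x)F(x+h)$ (so $|F_h|\le 1$). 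The inductive hypothesis applied to $F_h$ and $Q_h$ (degree $\le d-1$) bounds the inner absolute value by $\|F_h\|_{U^d}$; Jensen's inequality gives $\mathbb{E}_h[\|F_h\|_{U^d}] \le \big(\mathbb{E}_h[\|F_h\|_{U^d}^{2^d}]\big)^{1/2^d}$; and re-indexing the $2^{d+1}$-fold product shows $\mathbb{E}_h[\|F_h\|_{U^d}^{2^d}] = \|F\|_{U^{d+1}}^{2^{d+1}}$ (the $h$-average fuses with the $d$-fold direction average defining $\|F_h\|_{U^d}^{2^d}$ to produce the $(d{+}1)$-fold direction average defining $\|F\|_{U^{d+1}}^{2^{d+1}}$). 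Chaining these gives $|\mathbb{E}_x[F(x)(-1)^{P(x)}]|^2 \le \|F\|_{U^{d+1}}^2$; taking square roots closes the induction.

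This is the classical argument of Gowers~\cite{Gow01} (see also \cite{GT05}), so I do not expect a genuine obstacle. The only delicate points are bookkeeping in the inductive step: verifying $\deg Q_h \le d-1$, checking that every intermediate quantity is real so that Jensen and the square-root steps are valid, and noting that the identity $\mathbb{E}_h[\|F_h\|_{U^d}^{2^d}] = \|F\|_{U^{d+1}}^{2^{d+1}}$ is merely the relabelling of a subset $S\subseteq[d]$ together with the new direction $h$ into a subset of $[d+1]$. An equivalent, slightly more opaque route skips the explicit induction: since $\deg P\le d$ forces every $(d{+}1)$-fold discrete derivative of $P$ to vanish, one checks directly that $\|F\cdot(-1)^P\|_{U^{d+1}} = \|F\|_{U^{d+1}}$, and then monotonicity of the Gowers norms together with $\|H\|_{U^1} = |\mathbb{E}_x H(x)|$ yields $|\mathbb{E}_x[F(x)(-1)^{P(x)}]| = \|F(-1)^P\|_{U^1} \le \|F(-1)^P\|_{U^{d+1}} = \|F\|_{U^{d+1}}$.
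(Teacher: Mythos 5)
Your inductive argument is the standard Gowers--Cauchy--Schwarz proof of the fact that the paper itself does not prove but cites from \cite{Gow01,GT05}, and that argument is sound: the exact identity $\bigl|\mathbb{E}_x[F(x)(-1)^{P(x)}]\bigr|^2=\mathbb{E}_h\,\mathbb{E}_x[F_h(x)(-1)^{Q_h(x)}]$, the degree drop $\deg Q_h\le d-1$, Jensen, and the re-indexing $\mathbb{E}_h\bigl[\|F_h\|_{U^d}^{2^d}\bigr]=\|F\|_{U^{d+1}}^{2^{d+1}}$ are all correct (and your identification of the accept-minus-reject bias with the $2^{d+1}$-fold cube average is valid even for degenerate direction tuples, where both conditions hold automatically). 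What this yields is $1-2\delta_d(g)\le\|g\|_{U^{d+1}}$.

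The gap is in the translation step, not the induction. With the paper's definitions, $1-2\Rej^0_d(g)=\|g\|_{U^{d+1}}^{2^{d+1}}$, so the right-hand side of the lemma as printed is $(1-2\Rej^0_d(g))^{1/2^{d}}=\|g\|_{U^{d+1}}^{2}$, not $\|g\|_{U^{d+1}}$; your assertion that ``after taking $2^{d+1}$-st roots the claimed inequality reads $1-2\delta_d(g)\le\|F\|_{U^{d+1}}$'' silently replaces the exponent $1/2^{d}$ by $1/2^{d+1}$. The squared version is genuinely stronger and is false in general: for $d=0$ it would say $\bigl|\mathbb{E}[(-1)^{g}]\bigr|\le\bigl(\mathbb{E}[(-1)^{g}]\bigr)^{2}$, and for general $d$, a degree-$d$ polynomial corrupted independently on a $\delta$-fraction of points has left side about $1-2\delta$ but right side about $(1-2\delta)^{2}$. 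So no proof can close this as literally stated; the correct reading is that the exponent in the lemma should be $1/2^{d+1}$ (which matches the cited results and is exactly what your argument establishes). The discrepancy is harmless downstream: in the proof of Theorem~\ref{thm:xor} it only changes $t/2^{d}$ to $t/2^{d+1}$ in the displayed bound, leaving the conclusion $\delta_d(f^{\oplus t})\ge 1/2-2^{-\Omega(t/2^{d})}$ intact. You should flag the exponent mismatch explicitly rather than paper over it in the unwinding step.
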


We are now ready to prove Theorem~\ref{thm:xor} which we recall below.

\noindent {\bf Theorem~\ref{thm:xor} (recalled):}
{\em
Let $f : \F_2^n \rightarrow \F_2$.
Then
$$ \delta_d(f^{\oplus t}) \geq \frac{1 - (1 - 2 \min\{\epsilon_1/4, 2^
{d-2} \cdot \delta_d(f)\})^{t/2^d}}{2}.$$
In particular, if $\delta_d(f) \geq 0.1$, then $\delta_d(f^{\oplus
t}) \geq \frac{ 1- 2^{-\Omega(t/2^d)}}{2}.$
}

\begin{proofof}{Theorem~\ref{thm:xor}}
By Theorem~\ref{thm:main} and Proposition~\ref{prop:relatetests},
$$\Rej^0_d(f) \geq \min\{ \epsilon_1/4, 2^{d-2} \cdot \delta_d(f) \}.$$
Thus by Proposition~\ref{prop:power},
$$(1 - 2\Rej^0_d(f^{\oplus t}))^{\frac{1}{2^d}} =
(1 - 2\Rej^0_d(f))^{\frac{t}{2^d}}  \leq (1 - 2 \min\{\epsilon_1/4,
2^{d-2} \cdot \delta_d(f)\})^{\frac{t}{2^d}}.$$
Finally, Lemma~\ref{lem:gowers} shows that
$$\delta_d(f^{\oplus t}) \geq \frac{1 - \left(
     1 - 2\min\{\epsilon_1/4, 2^{d-2} \cdot \delta_d(f)\}
     \right)^{\frac{t}{2^d}}}{2}.$$
\end{proofof}
}

\ifnum\stoc=0
\xorsection
\fi

\section*{Acknowledgments}

Thanks to Alex Samorodnitsky and Shachar Lovett for sharing some
of the unpublished parts of their work \cite{LMS} and allowing
us to present parts of their proof in Appendix~\ref{app}.
Thanks to Alex also for numerous stimulating discussions from
the early stages of this work, and to Jakob Nordstr\"om for
bringing some of the authors together on this work.

\bibliographystyle{alpha}
%\bibliography{testing}

\newcommand{\etalchar}[1]{$^{#1}$}

\appendix

\newcommand{\rank}{{\mathrm{rank}}}
\newcommand{\Perm}{{\mathrm{perm}}}

\ifnum\stoc=1
\diffdimsection
\fi

\ifnum\stoc=1
\gowerssection
\fi

\ifnum\stoc=1
\xorsection
\fi

\section{Tightness of main theorem}

\label{app}
In this section we show that our main theorem
cannot be improved asymptotically. Specifically,
we show that there is a constant $\alpha > 1/2$ such that
for
infinitely many $d$, for sufficiently large $n$,
there exists a function $f = f_{d,n}:\F_2^n\to\F_2$
that passes the degree $d$ AKKLR test (i.e., the $(d+1)$-flat
test) with probability $\alpha$ (i.e., strictly greater than
half) while being almost uncorrelated with degree $d$ polynomials.

Our example comes directly from the works of \cite{LMS,GT07}.
In particular, the function $f_{d,n}$ is simply the degree
$d+1$ symmetric polynomial over $n$ variables (defined formally
below). When $d+1$ is a power of two, then \cite{LMS,GT07} (who
in turn attribute the ideas to \cite{AlonBeigel}) already
show that
this function is far from every degree $d$ polynomial.
To complete our theorem we only need to show that this function
passes the $(d+1)$-flat test with probability noticeably greater
than $1/2$. \cite{LMS,GT07} also analyzed this quantity, but
the published versions only show that this function passes the
$(d+1)$-flat test
with probability $1/2 + \epsilon(d)$ where $\epsilon(d) \to 0$
as $d \to \infty$. However, it turns out that an early (unpublished)
proof by the authors of \cite{LMS} can be used to
show that the acceptance probability
is $1/2 + \epsilon$ where $\epsilon$ is an absolute constant.
For completeness we include a complete proof here.

We start with the definition of the counterexample functions.
For positive integers $d, n$ with $d \leq n$, let
$S_{d,n}:\F_2^n \to \F_2$ be  given by
$$S_{d,n}(x_1,\ldots,x_n) = \sum_{I \subseteq [n], |I| = d}
\prod_{i \in I} x_i.$$

\begin{theorem}
\label{thm:counter}
Let $d+1 = 2^t$ for some integer $t \geq 2$.
Then, for every $\epsilon > 0$, there exists $n_0$ such
that for every $n \geq n_0$, the following hold:
\begin{enumerate}
\item $\delta_d(S_{d+1,n}) \geq 1/2 - \epsilon$.
\item $\Rej_{d,d+1}(S_{d+1,n}) \leq 1/2 - 2^{-7} + \epsilon$.
\end{enumerate}
\end{theorem}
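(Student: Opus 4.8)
For Part~1 there is essentially nothing new to do. With $d+1 = 2^t$, the integer identity $e_{2^t}(x) = \binom{\wt(x)}{2^t}$ together with Lucas' theorem shows that $S_{d+1,n}(x)$ is exactly the $t$-th binary digit of $\wt(x)$, and it was already shown in \cite{AlonBeigel,LMS,GT07} that such a symmetric function is $(\tfrac12 - o(1))$-far from degree-$(2^t-1)$ polynomials; I would simply cite this. (The mechanism is a symmetrization that reduces the correlation of $S_{d+1,n}$ with any degree-$(2^t-1)$ polynomial to the distribution of $\wt(x) \bmod 2^{t+1}$, which equidistributes over $\Z_{2^{t+1}}$ as $n \to \infty$, so the $t$-th digit becomes asymptotically unbiased and independent of the lower digits; since this is uniform in $n$ once $n$ is large, it yields the statement ``for every $n \ge n_0$''.)

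The substance is Part~2: that the $(d+1)$-flat test accepts $S_{d+1,n}$ with probability bounded strictly above $\tfrac12$. First I would convert the rejection probability into an algebraic quantity. Parametrize a uniformly random $(d+1)$-flat as $A = \{Mz + b : z \in \F_2^{d+1}\}$ with $M \in \F_2^{n \times (d+1)}$ of full rank and $b \in \F_2^n$; the test rejects iff $\sum_{y \in A} f(y) \ne 0$ (the folklore fact recalled in the text), and $\sum_{y \in A} f(y)$ is the coefficient of $z_1 \cdots z_{d+1}$ in $f|_A$ viewed as a polynomial in $z$, since summing a polynomial over $\F_2^{d+1}$ picks out its top monomial. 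For $f = S_{d+1,n} = e_{d+1} = e_{2^t}$, writing $f|_A(z) = e_{2^t}(\ell_1(z),\dots,\ell_n(z))$ with $\ell_i$ the affine coordinate functions of $A$, and extracting the coefficient of $z_1\cdots z_{2^t}$ from $\sum_{|I|=2^t}\prod_{i\in I}\ell_i(z)$, one gets $\sum_{y \in A} f(y) = \sum_{|I| = 2^t} \det(M_I)$, where $M_I$ is the $2^t \times 2^t$ submatrix of $M$ on the rows $I$; note $b$ has dropped out. Now Cauchy--Binet together with $x^2 = x$ over $\F_2$ gives $\sum_{|I|=2^t} \det(M_I) = \sum_{|I|=2^t} \det(M_I)^2 = \det(M^\top M)$. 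Hence $\Rej_{d,d+1}(S_{d+1,n}) = \Pr_M[\det(M^\top M) \ne 0]$, up to an $O(2^{d+1-n})$ error from dropping the full-rank condition on $M$.

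Next I would take $n \to \infty$. Writing $M^\top M = \sum_{i=1}^n M_i M_i^\top$, where $M_1,\dots,M_n \in \F_2^{d+1}$ are the i.i.d.\ uniform rows of $M$, $M^\top M$ is a sum of $n$ i.i.d.\ steps in the additive group of symmetric $(d+1) \times (d+1)$ matrices over $\F_2$. Since the steps $\{v v^\top : v \in \F_2^{d+1}\}$ generate this group (they include $0$, each $E_{jj} = e_j e_j^\top$, and each $E_{jk} + E_{kj} = (e_j + e_k)(e_j+e_k)^\top + e_j e_j^\top + e_k e_k^\top$), the distribution of $M^\top M$ converges to uniform as $n \to \infty$. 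Therefore $\Rej_{d,d+1}(S_{d+1,n}) \to \Pr_W[\det W \ne 0]$, where $W$ is a uniformly random symmetric $2^t \times 2^t$ matrix over $\F_2$; for $n \ge n_0(t,\epsilon)$ the rejection probability is within $\epsilon$ of this limit.

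Finally I would bound $\Pr_W[\det W \ne 0]$, the probability that a uniformly random symmetric $2^t \times 2^t$ matrix over $\F_2$ is nonsingular. For $t = 2$ this is a direct computation (expand $\det W$ over involutions, condition on the diagonal, and complete the square in the off-diagonal entries): $\Pr_W[\det W = 0] = 9/16$, so $\Rej_{d,d+1}(S_{4,n}) \to 7/16 = \tfrac12 - 2^{-4} \le \tfrac12 - 2^{-7}$. For general $t \ge 2$ the bound $\Pr_W[\det W \ne 0] \le \tfrac12 - 2^{-7}$ follows from the classical formula for the number of invertible symmetric matrices over $\F_2$. This last estimate is the main obstacle: the bound must hold \emph{uniformly} in $t$, so checking $2^t = 4$ does not suffice — one needs the exact count of invertible symmetric $2^t \times 2^t$ matrices (or a sufficiently robust estimate) and a verification that the resulting probabilities stay below $\tfrac12 - 2^{-7}$ for every $t \ge 2$.
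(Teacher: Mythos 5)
Your proposal is correct and follows the same skeleton as the paper's argument: Part~1 by citation, and for Part~2 the identity ``test rejects iff $\det(M^\top M)\ne 0$'' (you get it by extracting the top coefficient and applying Cauchy--Binet with $x^2=x$; the paper gets the very same identity via Ryser's formula for the permanent plus Cauchy--Binet), followed by a reduction to the nonsingularity probability of a uniformly random symmetric $(d+1)\times(d+1)$ matrix over $\F_2$. You diverge in the two supporting lemmas. For the equidistribution of the Gram matrix, you use a random-walk/character argument on the additive group of symmetric matrices (valid: the support $\{vv^\top\}$ contains $0$ and generates the group, so every nontrivial Fourier coefficient of the step distribution has modulus strictly below $1$), which is qualitative but suffices since $n_0$ may depend on $d$ and $\epsilon$; the paper instead exposes the rows of $M$ one at a time, conditioning on their linear independence together with the all-ones vector $\mathbf 1$ (this is how it handles the $\F_2$ quirk $\langle a,a\rangle=\langle a,\mathbf 1\rangle$ that forces the diagonal), and obtains an explicit $O(2^{d-n})$ total-variation bound. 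For the final step, the ``main obstacle'' you flag is in fact benign: the classical formula gives nonsingularity probability $\prod_{\mathrm{odd}\ i\le k}(1-2^{-i})$ for a uniform symmetric $k\times k$ matrix, which is decreasing in $k$ and already equals $7/16<1/2-2^{-7}$ at $k=3,4$, so uniformity in $t$ is immediate (and \cite{BCJKLR06}, cited in the paper, gives the $7/16$ bound for all $k\ge3$). The paper additionally sidesteps the exact count with a short self-contained argument: condition on the rank of the leading $(k-1)\times(k-1)$ principal submatrix; if that rank is at most $k-3$ (an event of probability at least $2^{-6}$, uniformly in $k$) the matrix is singular, and otherwise it is nonsingular with probability at most $1/2$, giving exactly the $1/2-2^{-7}$ bound you need. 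So your route is sound; to make it fully self-contained you should either quote the counting formula with the monotonicity observation above or include an elementary argument of the paper's kind.
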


Theorem \ref{thm:counter} follows immediately from the following
two lemmas.

\begin{lemma}[{\protect \cite[Theorem 11.3]{GT07}}]
\label{lem:counter-1}
Let $d+1 = 2^t$ for some integer $t \geq 0$.
Then, for every $\epsilon > 0$, for sufficiently
large $n$, we have $\delta_d(S_{d+1,n}) \geq 1/2 - \epsilon$.
\end{lemma}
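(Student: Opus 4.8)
The plan is to convert the statement into a bias bound for a single polynomial, prove that bound by iterated discrete differentiation, and reduce the remaining work to a linear-algebra fact about random matrices over $\F_2$.

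First I would pass from distance to correlation: since $1-2\delta_d(f)=\max_{\deg Q\le d}\mathbb{E}_x[(-1)^{f(x)+Q(x)}]$, the lemma is equivalent to the assertion that for every $\epsilon>0$ there is $n_0$ so that for all $n\ge n_0$ and every polynomial $Q$ with $\deg Q\le d$, the polynomial $R:=S_{d+1,n}+Q$ satisfies $|\beta|\le 2\epsilon$, where $\beta:=\mathbb{E}_x[(-1)^{R(x)}]$. The key structural point is that $S_{d+1,n}=\sum_{|I|=d+1}\prod_{i\in I}x_i$ is \emph{homogeneous} of degree $d+1=2^t$, so $Q$ (of degree $<2^t$) cannot touch the top-degree part; hence $\deg R=2^t$ and its homogeneous degree-$2^t$ part is exactly $S_{d+1,n}$. (Equivalently, by Lucas' theorem $S_{d+1,n}(x)=\binom{|x|}{2^t}\bmod 2$ is the $t$-th binary digit of $|x|$ — the reason powers of two are special — but only homogeneity of the top part is used below.)

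Next I would run the standard Cauchy--Schwarz/derivative iteration. With $D_ag(x):=g(x+a)+g(x)$ one has $\beta^{2^k}\le\mathbb{E}_{a_1,\ldots,a_k}\big[\mathbb{E}_x(-1)^{D_{a_1}\cdots D_{a_k}R(x)}\big]$ for every $k$. Choosing $k=2^t-1$, the polynomial $D_{a_1}\cdots D_{a_{2^t-1}}R$ has degree at most $2^t-k=1$, so it is affine: $\langle c(\vec a),x\rangle+e(\vec a)$; moreover $2^t-1$ derivatives collapse everything of degree $\le 2^t-1$ to a constant, so $c(\vec a)$ depends only on the homogeneous top part $S_{d+1,n}$ and equals the linear part of $D_{a_1}\cdots D_{a_{2^t-1}}S_{d+1,n}$. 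Since $\mathbb{E}_x(-1)^{\langle c,x\rangle+e}=\pm\mathbf 1[c=0]$, this yields
$$\beta^{\,2^{2^t-1}}\ \le\ \Pr_{a_1,\ldots,a_{2^t-1}\in\F_2^n}\big[\,c(a_1,\ldots,a_{2^t-1})=0\,\big].$$
So the whole lemma reduces to showing this probability tends to $0$ as $n\to\infty$ with $t$ fixed: once it drops below the fixed constant $(2\epsilon)^{2^{2^t-1}}$ we get $|\beta|\le 2\epsilon$ uniformly over $Q$, hence $\delta_d(S_{d+1,n})\ge\tfrac12-\epsilon$.

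Finally I would compute $c(\vec a)$ explicitly. Using $D_a\prod_{i\in I}x_i=\sum_{\emptyset\neq J\subseteq I}\big(\prod_{i\in J}a_i\big)\prod_{i\in I\setminus J}x_i$ and iterating, the coefficient of $x_j$ in $D_{a_1}\cdots D_{a_{2^t-1}}S_{d+1,n}$ is $c_j=\sum(a_1)_{i_1}\cdots(a_{2^t-1})_{i_{2^t-1}}$, summed over ordered $(2^t-1)$-tuples of distinct indices avoiding $j$. Peeling off $a_{2^t-1}$ gives $c_j=\langle a_{2^t-1},\gamma^{(j)}\rangle$ with $\gamma^{(j)}=\gamma^{(j)}(a_1,\ldots,a_{2^t-2})$ having zero $j$-th coordinate; thus $c=a_{2^t-1}^{\top}\Gamma$, where $\Gamma=\Gamma(a_1,\ldots,a_{2^t-2})$ is the symmetric zero-diagonal $n\times n$ matrix over $\F_2$ with $\Gamma_{ij}=\sum(a_1)_{p_1}\cdots(a_{2^t-2})_{p_{2^t-2}}$ over distinct $p_1,\ldots,p_{2^t-2}\notin\{i,j\}$. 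Therefore $\Pr_{\vec a}[c=0]=\mathbb{E}_{a_1,\ldots,a_{2^t-2}}[\,2^{-\mathrm{rank}_{\F_2}\Gamma}\,]$, and it suffices to prove $\mathrm{rank}_{\F_2}(\Gamma)\to\infty$ in probability. The case $t=0$ is trivial, and for $t=1$ this is immediate ($\Gamma=J-I$, rank $\ge n-1$), recovering the classical fact that $S_{2,n}$ is a high-rank quadratic form. \emph{The hard part is the general rank bound}: one must show that the structured symmetric matrix $\Gamma$, built from $2^t-2$ random vectors, is far from low rank with high probability. I would try to prove it by induction on $t$ — specializing some $a_\ell$ to distinct standard basis vectors exhibits a large identity submatrix, so $\Gamma$ has large rank on a set of positive measure, and a second-moment / rank-robustness argument should promote this to a high-probability statement. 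This is exactly the computation carried out, following \cite{AlonBeigel}, in \cite[Theorem~11.3]{GT07}, which may simply be invoked.
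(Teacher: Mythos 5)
You should first note that the paper itself does not prove this lemma at all: it is imported as a black box from \cite[Theorem 11.3]{GT07}, so your closing move --- invoking that theorem (equivalently, the Alon--Beigel-style computation \cite{AlonBeigel} reproduced there) for the hard step --- relies on exactly what the paper relies on. What you add in front of it, the Gowers--Cauchy--Schwarz/polarization reduction, is correct: since $\deg Q\le d$, only the homogeneous degree-$(d+1)$ part $S_{d+1,n}$ survives $d$ differentiations, so the bound $|\beta|^{2^{d}}\le\Pr[c(\vec a)=0]=\mathbb{E}\bigl[2^{-\mathrm{rank}(\Gamma)}\bigr]$ is uniform in $Q$, and it correctly isolates the only place where the hypothesis $d+1=2^t$ can enter, namely the claim that $\mathrm{rank}(\Gamma)\to\infty$ in probability. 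Be aware that this claim is genuinely the entire content of the cited theorem and fails without the power-of-two hypothesis: for $d+1=3$, whenever $a_1$ has even weight one gets $\Gamma=a_1\mathbf{1}^T+\mathbf{1}a_1^T$, of rank at most $2$, so $\Pr[c=0]\ge 1/8$, consistent with the fact that $S_{3,n}$ is $(1/4+o(1))$-close to the zero polynomial and hence the analogue of the lemma is false there. Your inductive sketch for the rank bound (basis-vector specializations giving large rank with positive probability, plus an unspecified second-moment upgrade) is not a proof as it stands; since you may, as the paper does, simply cite \cite{GT07}, the proposal is acceptable, but the honest summary is that what you actually prove is the routine reduction, and the cited rank/bias estimate is where all the work lives.
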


\begin{lemma}
\label{lem:counter-2}
For every $d \geq 3$ and $\epsilon > 0$,
for sufficiently large $n$, we have
$\Rej_{d,d+1}(S_{d+1,n}) \leq 1/2 - 2^{-7} + \epsilon$.
\end{lemma}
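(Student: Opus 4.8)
The plan is to express the rejection probability of the $(d+1)$-flat test on $S := S_{d+1,n}$ in terms of the Gram matrix of the test's $d+1$ random directions, and then invoke the (known) fact that a uniformly random symmetric matrix over $\F_2$ is singular with probability at least $9/16$. Recall that $\tgowers = T_{d,d+1}$, run on a flat $A = \{x_0 + \sum_{i=1}^{d+1} y_i a_i\}$, accepts iff $S|_A$ has degree $\le d$ as a polynomial in $y$, i.e.\ iff the coefficient of $y_1\cdots y_{d+1}$ vanishes, i.e.\ iff $(D_{a_1}\cdots D_{a_{d+1}} S)(x_0) = \sum_{T\subseteq[d+1]} S(x_0 + \sum_{i\in T} a_i) = 0$. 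Since $S$ is homogeneous of degree exactly $d+1$, this $(d+1)$-fold discrete derivative is a constant $\Lambda(a_1,\dots,a_{d+1})$ independent of $x_0$, so $\Rej_{d,d+1}(S)$ depends only on the distribution of the direction tuple $(a_1,\dots,a_{d+1})$, which is uniform over linearly independent $(d+1)$-tuples in $\F_2^n$.

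The second step identifies $\Lambda$. Plugging $S = \sum_{|I|=d+1}\prod_{j\in I}x_j$ into $\Lambda(a_1,\dots,a_{d+1}) = \sum_{T\subseteq[d+1]} S(\sum_{i\in T}a_i)$ and expanding by multilinearity, only the fully-mixed monomials survive the alternating sum over $T$, giving $\Lambda(a_1,\dots,a_{d+1}) = \sum_{|I|=d+1}\mathrm{per}(A|_I)$, where $A\in\F_2^{(d+1)\times n}$ has rows $a_1,\dots,a_{d+1}$ and $A|_I$ is its square submatrix on the columns in $I$. Over $\F_2$ permanent equals determinant, and Cauchy--Binet (a polynomial identity, hence valid over $\F_2$) together with $x^2=x$ gives $\sum_{|I|=d+1}\det(A|_I) = \det(AA^T)$. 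Hence the test rejects $S$ precisely when the Gram matrix $G := AA^T$, $G_{ij}=\langle a_i,a_j\rangle$, is nonsingular. Since $\det G\neq 0$ already forces the $a_i$ to be linearly independent, $\Rej_{d,d+1}(S) = \Pr[\det G\neq 0]\,/\,\Pr[a_1,\dots,a_{d+1}\ \text{independent}]$, where now I may take the $a_i$ i.i.d.\ uniform and the denominator is $\ge 1-2^{d+1-n}$, a harmless factor.

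The third step handles $\Pr[\det G\neq 0]$ for i.i.d.\ uniform $a_i\in\F_2^n$. Writing $G=\sum_{\ell=1}^n v_\ell v_\ell^T$ with $v_\ell$ i.i.d.\ uniform in $\F_2^{d+1}$, a short Fourier estimate on the group $\mathrm{Sym}_{d+1}(\F_2)$ shows the law of $G$ is within total variation $O_d(2^{-n})$ of uniform: the pullback of any nontrivial character along $v\mapsto vv^T$ is a nonconstant polynomial of degree $\le 2$ over $\F_2$, hence has bias $\le\tfrac12$, so every nontrivial Fourier coefficient of the law of $G$ has absolute value $\le 2^{-n}$. Therefore $\Rej_{d,d+1}(S_{d+1,n}) = p_{d+1} + o_n(1)$, where $p_k$ is the probability that a uniform symmetric $k\times k$ matrix over $\F_2$ is nonsingular. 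Finally I would show $p_k\le 7/16$ for all $k\ge 3$ (alternatively cite the classical enumeration of symmetric $\F_2$-matrices, e.g.\ \cite{BM88,BCJKLR06}): conditioning on the $(1,1)$-entry and, when it equals $0$, peeling off a $2\times 2$ block of the form $\left(\begin{smallmatrix}0&1\\1&*\end{smallmatrix}\right)$ via a Schur complement yields the recursion $p_k = \tfrac12 p_{k-1} + \tfrac12(1-2^{-(k-1)})p_{k-2}$ with $p_1=p_2=\tfrac12$, so $p_3=p_4=7/16$ and $p_k<7/16$ for $k\ge 5$ by a one-line induction. Since $7/16 < \tfrac12 - 2^{-7}$, taking $n\ge n_0(d,\epsilon)$ large enough to absorb the $o_n(1)$ term gives $\Rej_{d,d+1}(S_{d+1,n}) \le \tfrac12 - 2^{-7} + \epsilon$.

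The algebraic reduction (iterated derivative, then a sum of $(d+1)\times(d+1)$ minors, then $\det(AA^T)$) is the conceptual core but is short once Cauchy--Binet over $\F_2$ is recalled. I expect the main obstacle to be the quantitative input that a random symmetric matrix over $\F_2$ is nonsingular with probability safely below $\tfrac12$ — establishing this cleanly needs either the Schur-complement recursion above or an appeal to the known counting formulas — together with pinning down the convergence of $AA^T$ to the uniform symmetric matrix precisely enough to hide it inside the $+\epsilon$ slack (along with the minor linear-independence correction).
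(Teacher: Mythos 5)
Your proposal is correct, and its algebraic core coincides with the paper's: both reduce acceptance on a flat $x_0+\mspan(a_1,\ldots,a_{d+1})$ to the vanishing of the constant $(d+1)$-fold derivative of $S_{d+1,n}$, identify that constant with $\sum_{|I|=d+1}\mathrm{per}(M_I)=\sum_I\det(M_I)^2=\det(MM^T)$ via permanent-equals-determinant and Cauchy--Binet, and thereby equate rejection with nonsingularity of the Gram matrix. Where you genuinely diverge is in the two probabilistic ingredients. First, for equidistribution of $MM^T$ you take the rows i.i.d.\ uniform (correcting by the factor $\Pr[\text{independent}]\geq 1-2^{d+1-n}$, harmless for an upper bound) and run a Fourier/bias argument on the group of symmetric matrices, using that each nontrivial character pulled back along $v\mapsto vv^T$ is a nonconstant polynomial of degree at most $2$, hence has bias at most $\tfrac12$, giving total variation distance $O_d(2^{-n})$ from uniform; the paper instead conditions on the rows together with $\mathbf{1}$ being independent and exposes the rows one at a time, using $\langle a,a\rangle=\langle a,\mathbf{1}\rangle$ to make each new batch of Gram entries nearly uniform. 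Your route avoids that conditioning bookkeeping and the all-ones trick, at the (mild) cost of the correction factor and of a TV bound whose constant is exponential in $d^2$ --- fine here since $d$ is fixed. Second, for the singularity probability of a uniform symmetric matrix, your Schur-complement recursion $p_k=\tfrac12 p_{k-1}+\tfrac12(1-2^{-(k-1)})p_{k-2}$ with $p_1=p_2=\tfrac12$ is valid (the relabeling to put a $1$ in position $(1,2)$ and the uniformity of the Schur complement both check out) and yields the sharper bound $p_k\leq 7/16$ for $k\geq 3$, which the paper only obtains by citation; the paper's own self-contained argument (rank of the leading $(k-1)\times(k-1)$ principal submatrix is at most $k-3$ with probability at least $2^{-6}$, and otherwise nonsingularity has probability at most $\tfrac12$) gives only the weaker $\tfrac12-2^{-7}$ for $k\geq 4$, which is exactly the constant appearing in the lemma statement. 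So your version proves a slightly stronger bound than the stated one, with both supporting lemmas established by different but sound means.
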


We prove Lemma~\ref{lem:counter-2} in the rest of this section.
We stress again that this approach is from an unpublished version
of \cite{LMS}, and we include it for completeness.

We start with some notation. For $x, a_1, \ldots,a_{d+1} \in \F_2^n$,
let $I(x , a_1, \ldots,a_{d+1}) = 1$ if the $(d+1)$-flat test rejects
$S_{d+1,n}$
when picking the affine subspace $x + \mspan(a_1,\ldots,a_{d+1})$.
Note that $\Rej_{d,d+1}(S_{d+1,n}) = \mathbb E_{x,a_1,\ldots,a_{d+1}}
[I(x, a_1, \ldots,a_{d+1})]$, where the expectation is taken over
$x, a_1,\ldots,a_{d+1}$ picked uniformly and independently from $
\F_2^n$, conditioned
on $a_1, \ldots, a_{d+1}$ being linearly independent.

\begin{lemma}
For $x, a_1, \ldots,a_{d+1} \in \F_2^n$, let $M \in \F_2^{(d+1)
\times n}$
be the matrix whose $i^{\rm{th}}$ row is $a_i$. Then
$I(x, a_1, \ldots,a_{d+1}) = 1$ if and only if $M \cdot M^T$ is of
full rank.
\end{lemma}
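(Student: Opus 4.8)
The plan is to compute the coefficient of the top monomial in the restriction of $S_{d+1,n}$ to the chosen flat and recognize it, over $\F_2$, as $\det(MM^\top)$. First I would parametrize the flat $A = x + \mspan(a_1,\ldots,a_{d+1})$ by coordinates $t = (t_1,\ldots,t_{d+1})\in\F_2^{d+1}$ via $a_t \eqdef x + \sum_{i=1}^{d+1} t_i a_i$; since the $a_i$ are linearly independent (as guaranteed by the flat test), this is a bijection onto $A$, and $S_{d+1,n}|_A$, written in its (unique) reduced multilinear form in $t_1,\ldots,t_{d+1}$, has degree at most $d+1$. Hence the test rejects on $A$ exactly when the coefficient of $t_1 t_2 \cdots t_{d+1}$ in that multilinear form equals $1$. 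Note in advance that this coefficient will turn out independent of $x$, consistent with the statement.

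Next I would evaluate the coefficient. Writing $S_{d+1,n}(a_t) = \sum_{|I|=d+1}\prod_{j\in I}\bigl(x_j + \sum_{i=1}^{d+1}(a_i)_j\,t_i\bigr)$ and expanding each factor as (constant) $+$ (linear in $t$), a given factor $y_j$ contributes at most one $t$-variable, so the only way to reach formal degree $d+1$ is to take the linear part from every factor; every term that uses some $x_j$ has formal degree $\le d$, and reduction $t_i^2\mapsto t_i$ never raises degree, so such terms cannot feed $t_1\cdots t_{d+1}$. Expanding $\prod_{j\in I}\bigl(\sum_i (a_i)_j t_i\bigr)$ over maps $\phi:I\to[d+1]$, the monomial $\prod_{j\in I}t_{\phi(j)}$ equals $t_1\cdots t_{d+1}$ precisely when $\phi$ is a bijection, and otherwise is supported on a proper subset of the variables after reduction. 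Writing $\tau=\phi^{-1}$ and letting $\tau$ range over all injections $[d+1]\hookrightarrow[n]$, the coefficient of $t_1\cdots t_{d+1}$ in $S_{d+1,n}|_A$ is $\sum_{\tau:[d+1]\hookrightarrow[n]}\prod_{i=1}^{d+1}(a_i)_{\tau(i)}$.

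Then I would turn this into a statement about $M$. Grouping injections $\tau$ by their image $C\subseteq[n]$ with $|C|=d+1$, and letting $M_C$ denote the $(d+1)\times(d+1)$ submatrix of $M$ on the columns in $C$, the inner sum over bijections $[d+1]\to C$ is $\Perm(M_C)=\det(M_C)$ over $\F_2$; hence the coefficient equals $\sum_{|C|=d+1}\det(M_C)$. By the Cauchy--Binet formula, $\det(MM^\top)=\sum_{|C|=d+1}\det(M_C)\det(M_C^\top)=\sum_{|C|=d+1}\det(M_C)^2$, and over $\F_2$ every element is its own square, so $\det(MM^\top)=\sum_{|C|=d+1}\det(M_C)$ is exactly the coefficient of $t_1\cdots t_{d+1}$. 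Thus $I(x,a_1,\ldots,a_{d+1})=1$ iff $\det(MM^\top)=1$ iff the $(d+1)\times(d+1)$ matrix $MM^\top$ over $\F_2$ has full rank.

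The one delicate point is the combinatorial expansion in the second paragraph: one must be careful that "degree" means the degree of the reduced multilinear representative, that each factor $y_j$ is linear in $t$ and so contributes at most one variable, and that the stray $t_i^2$ terms genuinely lower the degree rather than cancel back into $t_1\cdots t_{d+1}$. The remaining ingredients --- $\Perm = \det$ over $\F_2$ and Cauchy--Binet --- are standard, so I expect this bookkeeping to be the main (and only mild) obstacle.
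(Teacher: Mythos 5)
Your proposal is correct, and its core is the same as the paper's: both arguments reduce the rejection event to the quantity $\sum_{|C|=d+1}\mathrm{perm}(M_C)$ over $(d+1)$-element column sets $C$, and then use $\mathrm{perm}(M_C)=\det(M_C)=\det(M_C)^2$ over $\F_2$ together with the Cauchy--Binet formula to identify this with $\det(MM^T)$, hence with nonsingularity of $MM^T$. Where you differ is in how the permanents arise. The paper characterizes rejection by the nonvanishing of the $2^{d+1}$-point sum $\sum_{J\subseteq[d+1]}S_{d+1,n}(x+\sum_{j\in J}a_j)$, argues that this is independent of $x$ because it is the (constant) $(d+1)$-fold derivative of a degree-$(d+1)$ polynomial, and converts each inner sum $\sum_{J}(-1)^{d+1-|J|}\prod_{i\in I}(\sum_{j\in J}a_{j,i})$ into $\mathrm{perm}(M_I)$ via Ryser's formula. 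You instead extract the coefficient of $t_1\cdots t_{d+1}$ from the restriction written in flat coordinates, obtaining the permanent directly as a sum over bijections; this avoids invoking Ryser's formula and makes the independence from $x$ fall out of the computation rather than requiring a separate derivative argument. The bookkeeping you flag---that a term of formal degree at most $d$ involves at most $d$ distinct $t$-variables and so cannot reduce to $t_1\cdots t_{d+1}$---is exactly the point that needs care, and you handle it correctly; the standing assumption that $a_1,\ldots,a_{d+1}$ are linearly independent is harmless, since $\Rej_{d,d+1}$ is defined by conditioning on precisely that event.
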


Note that the acceptance of the $(d+1)$-flat is independent of $x$;
this is explained in the proof.

\begin{proof}
For each $I \subseteq [n]$, $|I| = d+1$, we define the polynomial
$f_I(x) = \prod_{i \in I} x_i$. Note that
$$S_{d+1, n} = \sum_{I \subseteq [n], |I| = d+1} f_I.$$

Now, the $(d+1)$-flat test accepts $S_{d+1, n}$ when picking the
affine subspace $x + \mspan(a_1,\ldots,a_{d+1})$ if and only if
\begin{equation}
\label{eq:rejformula}
  \sum_{J \subseteq [d+1]} S_{d+1, n} (x + \sum_{j \in J} a_j) = 0.
\end{equation}

Note that the acceptance of the $(d+1)$-flat
test is independent of $x$. This is because $S_{d+1,n}$ is a degree $d+1$
polynomial, so it's $(d+1)$st derivative (the output of the $(d+1)$-flat
test) is constant.  Hence, the test accepts if and only if
\begin{equation}
  \sum_{J \subseteq [d+1]} S_{d+1, n} (\sum_{j \in J} a_j) = 0,
\end{equation}
which can be rewritten as
$$ \sum_{I \subseteq [n], |I| = d+1} \sum_{J \subseteq [d+1]}
f_I (\sum_{j \in J} a_j) = 0.$$

For a fixed $I$, we focus our attention on the expression
$\sum_{J \subseteq [d+1]} f_I (\sum_{j \in J} a_j)$.
By definition, this equals
$\sum_{J \subseteq [d+1]} \prod_{i \in I} (\sum_{j \in J} a_
{j,i})$
which in turn equals
$\sum_{J \subseteq [d+1]} (-1)^{d+1-|J|} \prod_{i \in I} (\sum_{j
\in J} a_{j,i})$
(since $1 = -1$ in $\F_2$).
By Ryser's formula, this equals
$\Perm(M_I)$, where $\Perm$ is the permanent function, and
$M_I$ is the $(d+1) \times (d+1)$ submatrix
of $M$ formed by the columns of $I$.

Thus, the left hand side of Equation~\eqref{eq:rejformula} equals
$$\sum_{I \subseteq [n], |I| = d+1} \Perm(M_I).$$

Since we are working over $\F_2$, we have that $\Perm(M_I) = \det
(M_I) = \det(M_I)^2$.
Thus,
\begin{align*}
\sum_{I \subseteq [n], |I| = d+1} \Perm(M_I) &= \sum_{I \subseteq
[n], |I| = d+1} \det(M_I)^2\\
&= \det(M M^T). \mbox{ \quad\quad\quad by the Cauchy-Binet formula}
\end{align*}

We thus conclude that $I(x, a_1, \ldots, a_{d+1}) = 1$ if and only if
$MM^T$ is nonsingular.

\end{proof}

We thus turn our attention to the probability
that for a randomly chosen matrix $M$,
the matrix $M \cdot M^T$ is of full rank.
We first note the following fact on the distribution
of $M\cdot M^T$ when $M$ is chosen uniformly
from the space of full rank matrices.

\begin{lemma}
Let $A,B \in \F_2^{(d+1) \times (d+1)}$ be random variables
generated as follows: $A$ is a symmetric matrix
chosen uniformly at random, and $B = M \cdot M^T$ where
$M$ is a random $(d+1) \times n$ matrix chosen uniformly
from matrices of rank $d+1$. Then the total variation
distance between $A$ and $B$ is $O(2^{d-n})$.
\end{lemma}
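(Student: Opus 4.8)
The plan is to prove the lemma by revealing the rows of $M$ one at a time and coupling the resulting Gram matrix with a genuinely uniform symmetric matrix. Write $m = d+1$ and let $a_1,\dots,a_m \in \F_2^n$ be the rows of $M$, so $(MM^T)_{ij} = \langle a_i, a_j\rangle$. First I would reduce to the case where $M$ is a \emph{uniformly random} $m\times n$ matrix (with no full-rank conditioning): since $\Pr[\rank(M) < m] \le \sum_{i=0}^{m-1} 2^{i-n} \le 2^{m-n} = O(2^{d-n})$, conditioning on $\rank(M)=m$ changes the distribution of $MM^T$ by at most this much in total variation (data processing applied to $X\mapsto XX^T$). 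So it suffices to show $MM^T$ is $O(2^{d-n})$-close to the uniform distribution on symmetric $m\times m$ matrices when $M$ is uniform.

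The crucial point, special to characteristic two, is that the diagonal entry $\langle a_i,a_i\rangle = \sum_k a_{i,k}^2 = \sum_k a_{i,k} = \langle a_i,\vec 1\rangle$ is a \emph{linear} functional of $a_i$, where $\vec 1$ is the all-ones vector. Consequently, after $a_1,\dots,a_{i-1}$ have been revealed, the $i$-th lower-triangular row of the Gram matrix, $r_i := (\langle a_i,a_1\rangle,\dots,\langle a_i,a_{i-1}\rangle,\langle a_i,a_i\rangle)\in\F_2^i$, is exactly $W_i a_i$, where $W_i$ is the $i\times n$ matrix with rows $a_1,\dots,a_{i-1},\vec 1$. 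Let $B_i$ be the event that $a_1,\dots,a_{i-1},\vec 1$ are linearly dependent. On $\lnot B_i$ the matrix $W_i$ has rank $i$, so $a_i\mapsto W_i a_i$ is surjective onto $\F_2^i$, and hence for $a_i$ uniform and independent of the past, $r_i$ is uniform over $\F_2^i$ conditioned on everything revealed so far. A union bound gives $\Pr[B_i]\le\sum_{j=1}^{i-1}2^{j-n}<2^{i-n}$ (if $a_j\in\mspan(\vec 1,a_1,\dots,a_{j-1})$ then this span has at most $2^{j}$ points), so $\sum_{i=1}^m\Pr[B_i]<2^{m+1-n}=O(2^{d-n})$.

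Now I would run the coupling: reveal $a_1,\dots,a_m$ sequentially, and in parallel build a matrix $U$ by appending, at step $i$, a row $u^{(i)}\in\F_2^i$; set $u^{(i)}=W_i a_i$ when $\lnot B_i$ holds and draw $u^{(i)}$ as a fresh uniform vector when $B_i$ holds. By the previous paragraph, in either branch $u^{(i)}$ is uniform over $\F_2^i$ conditioned on $a_1,\dots,a_{i-1}$, so the $u^{(i)}$ assemble into a uniformly random symmetric $m\times m$ matrix $U$ (the free bits of a symmetric matrix are exactly $1+2+\dots+m$ in number, filled in row by row). On the other hand, as long as no $B_i$ occurs, $u^{(i)}=r_i$ for all $i$ and therefore $U=MM^T$. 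Hence $d_{TV}(MM^T,U)\le\sum_i\Pr[B_i]=O(2^{d-n})$, and combining with the first paragraph yields the lemma.

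The one point that needs care in the write-up, and which I regard as the only real content beyond routine union bounds, is checking that the coupled matrix $U$ is \emph{exactly} uniform over symmetric matrices: one must confirm that $u^{(i)}$ is uniform over $\F_2^i$ and independent of $u^{(1)},\dots,u^{(i-1)}$ after averaging over whether $B_i$ occurs, which holds because $u^{(i)}$ is uniform conditioned on the \emph{full} history $a_1,\dots,a_{i-1}$ in both branches of the coupling. Beyond that, the argument uses only that surjective linear maps push uniform distributions to uniform distributions, together with the elementary bounds on $\Pr[B_i]$.
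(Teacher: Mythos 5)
Your proof is correct and follows essentially the same route as the paper's: expose the rows of $M$ one at a time, observe that in characteristic two the new Gram entries (including the diagonal, via $\langle a_i,a_i\rangle=\langle a_i,\mathbf 1\rangle$) are inner products of a fresh uniform row with $a_1,\ldots,a_{i-1},\mathbf 1$, hence uniform whenever these are linearly independent, and charge the dependence events $O(2^{d-n})$ in total. The only difference is bookkeeping: the paper conditions on the rows together with $\mathbf 1$ being independent and bounds each round's deviation from uniformity, whereas you discharge the rank conditioning up front by a data-processing step and then run an explicit coupling; both yield the same bound.
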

\begin{proof}
Let $E$ be the event that the rows of $M$, along with the vector $
\mathbf 1$ (the vector which
is $1$ in each coordinate) are all linearly independent.
Note that the probability of $E$ is at least $1 - 2^{d+1-n}$.
We will now show that the distribution of $B | E$ is $\exp(-n)$-close
to the distribution
of $A$. This will complete the proof.

Let the rows of $M$ be $a_1, \ldots, a_{d+1}$. We pick them one at at
time.
Having picked $a_1, \ldots, a_i$, the new entries of $B$ that get
determined by
$a_{i+1}$ are the entries $B_{i+1, j}$ for all $j \leq i+1$
(these determine the entries $B_{j, i+1}$).
If $a_{i+1}$ is picked uniformly from $\F_2^n$, then by the linear
independence of
$a_1, \ldots, a_i, \mathbf 1$, we see that the bits
\begin{itemize}
\item $B_{i+1, 1} = \langle a_{i+1}, a_1 \rangle$,
\item $B_{i+1, 2} = \langle a_{i+1}, a_2 \rangle$,
\item $\ldots$,
\item $B_{i+1, i} = \langle a_{i+1}, a_i \rangle$,
\item $B_{i+1, i+1} = \langle a_{i+1}, a_{i+1} \rangle = \langle a_{i
+1}, \mathbf 1 \rangle$,
\end{itemize}
are all uniformly random and independent, as required. However, since
we have conditioned on $E$,
$a_{i+1}$ is not picked uniformly from $\F_2^n$, but picked uniformly
from $\F_2^n \setminus \mspan\{a_1, \ldots, a_{i}, \mathbf 1\}$.
Still, this distribution of $a_{i+1}$ is $2^{i+1-n}$-close to the
uniform distribution over
$\F_2^n$, and as a consequence, the distribution of the bits $B_{i+1,
1}, \ldots, B_{i+1, i+1}$ is
$O(2^{d-n})$-close to the distribution of uniform and independent
random bits.

To summarize, the entries of the matrix $B$ are exposed in $d+1$
rounds. The bits $B_{i,j}$ for $j < i$ are exposed
in round $i$, and their distribution, conditioned on the bits exposed
in all the previous rounds, is $O(2^{d-n})$-close to that of uniform
and independent random bits. This implies the desired claim on the
distribution of $B$.
\end{proof}

The final lemma
shows that the random symmetric matrix
$A \in \F_2^{(d+1) \times (d+1)}$ is full rank with probability
bounded away from 1/2 by some constant independent of $d$.  This
seems to be a well-analyzed problem and
\cite[Theorem 4.14]{BCJKLR06} (see also \cite{BM88} for related work)
already proves this fact; in particular, they
show that if $k \geq 3$, a random symmetric $k$-by-$k$ matrix
over $\F_2$ is full rank with probability at most $7/16$.
%The proof
%is by using ``symmetric Gaussian elimination'' (a technique adapted
%from \cite{BM88}) to bring the random matrix to a structured form and
%then by doing induction on $k$.
For completeness, we include a simple
proof that establishes a weaker bound on the
probability of non-singularity.

\begin{lemma}
For $k \geq 4$,
the probability that a random symmetric matrix $A \in \F_2^{k \times
k}$ has
full rank is at most $1/2 - 2^{-7}$.
\end{lemma}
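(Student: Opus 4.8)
The plan is to prove the stronger bound $p_k \le 7/16$ for every $k\ge 3$, where $p_k$ denotes the probability that a uniformly random symmetric matrix in $\F_2^{k\times k}$ is nonsingular; since $7/16 < 1/2-2^{-7}$, this implies the lemma (in fact already for $k\ge 3$). The engine is the recursion
$$p_k \;=\; \tfrac12\, p_{k-1} \;+\; \bigl(\tfrac12 - 2^{-k}\bigr)\, p_{k-2}, \qquad k\ge 2,$$
(with $p_0:=1$), obtained by conditioning on the first row of $A$, which also determines the first column since $A$ is symmetric. Write $A$ in block form with top-left entry $A_{11}\in\F_2$, first column below the diagonal $c=(A_{21},\dots,A_{k1})^{T}\in\F_2^{k-1}$, and lower-right block $B$, a uniform symmetric $(k-1)\times(k-1)$ matrix independent of $(A_{11},c)$.

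Three cases arise. If the whole first row vanishes, i.e.\ $A_{11}=0$ and $c=0$ (probability $2^{-k}$), then $A$ is singular. If $A_{11}=1$ (probability $\tfrac12$), the Schur-complement identity gives $\det A = \det(B+cc^{T})$ over $\F_2$; since $B\mapsto B+cc^{T}$ is a bijection of the symmetric matrices for each fixed $c$, the matrix $B+cc^{T}$ is again uniform symmetric, so this case contributes $\tfrac12 p_{k-1}$. In the remaining case $A_{11}=0$, $c\ne 0$ (probability $\tfrac12-2^{-k}$), I would first observe that conjugating $A$ by $\mathrm{diag}(1,P)$ for $P\in GL_{k-1}(\F_2)$ sends $c\mapsto P^{T}c$ and $B\mapsto P^{T}BP$; this is measure preserving on (nonzero $c$, symmetric $B$) and acts transitively on the nonzero vectors $c$, so the conditional nonsingularity probability does not depend on which nonzero $c$ we fix, and we may take $c=e_1$. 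With $c=e_1$, two successive cofactor expansions — along the first row, then along the first column of the resulting minor — show $\det A = \det B'$ over $\F_2$, where $B'$ is the principal submatrix of $B$ on the last $k-2$ coordinates, itself uniform symmetric of size $k-2$. Hence this case contributes $(\tfrac12-2^{-k})p_{k-2}$, establishing the recursion.

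It remains to run the induction. Directly, $p_1=p_2=\tfrac12$ (for the $2\times2$ case $\det\left(\begin{smallmatrix}a&b\\ b&c\end{smallmatrix}\right)=ac+b$ over $\F_2$, which equals $1$ for exactly one value of $b$ given $(a,c)$). The recursion then gives $p_3=\tfrac14+\tfrac38\cdot\tfrac12=\tfrac{7}{16}$ and $p_4=\tfrac12\cdot\tfrac7{16}+\tfrac7{16}\cdot\tfrac12=\tfrac7{16}$. For $k\ge5$, strong induction closes it: $p_k\le \tfrac12 p_{k-1}+\tfrac12 p_{k-2}\le \tfrac12\cdot\tfrac7{16}+\tfrac12\cdot\tfrac7{16}=\tfrac7{16}$. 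The step I expect to require the most care is the case $A_{11}=0$, $c\ne0$: one must use the full group $GL_{k-1}(\F_2)$ — coordinate permutations alone preserve Hamming weight and so do not act transitively on nonzero vectors — to reduce to $c=e_1$, and then carry out the cofactor expansions cleanly, using that signs are irrelevant over $\F_2$ and that a principal sub-block of a uniform random symmetric matrix is again uniform random symmetric.
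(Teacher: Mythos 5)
Your proposal is correct, and it takes a genuinely different route from the paper. The paper conditions on the top-left $(k-1)\times(k-1)$ block $A_{k-1}$: it shows that full rank is impossible when $\rank(A_{k-1})\le k-3$, that the conditional probability of full rank is at most $1/2$ when $\rank(A_{k-1})\in\{k-2,k-1\}$, and then lower bounds $\Pr[\rank(A_{k-1})\le k-3]\ge 2^{-6}$ by exposing rows one at a time; combining gives the stated $1/2-2^{-7}$. You instead derive the exact recursion $p_k=\tfrac12 p_{k-1}+(\tfrac12-2^{-k})p_{k-2}$ by conditioning on the first row: the Schur-complement step for $A_{11}=1$ is right (over $\F_2$, $\det A=\det(B+cc^T)$ and $B\mapsto B+cc^T$ is a bijection of symmetric matrices), the congruence argument with $Q=\mathrm{diag}(1,P)$ correctly shows the conditional probability is the same for every nonzero $c$, and the two cofactor expansions with $c=e_1$ indeed give $\det A=\det B'$ with $B'$ uniform symmetric of size $k-2$; the base cases $p_1=p_2=1/2$, the values $p_3=p_4=7/16$, and the strong induction for $k\ge5$ all check out, and $7/16<1/2-2^{-7}$. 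What your approach buys is precisely the sharper bound $7/16$ for all $k\ge3$ that the paper attributes to \cite{BCJKLR06} (Theorem 4.14) but deliberately does not prove, opting instead for "a simple proof that establishes a weaker bound"; your recursion also pins down the exact distribution of the corank evolution, at the cost of a bit more algebra (Schur complements and the $GL_{k-1}(\F_2)$ congruence action). The paper's argument is cruder but needs only elementary observations about how rank can change when one row and column are appended, which is all that is required for the $1/2-2^{-7}$ bound used downstream.
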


\begin{proof}
Let $A_i$ denote the $i \times i$ submatrix containing
the first $i$ rows and columns of $A$. We consider the
probability that $A$ has full rank, conditioned upon
various choices of $A_{k-1}$. The first claim below shows
that the probability of this event is at most half
if the rank of $A_{k-1}$ is either $k-1$ or $k-2$; and
zero if the rank of $A_{k-1}$ is at most $k-3$. We then
argue in the next claim that the probability that
$A_{k-1}$ has rank at most $k-3$ is bounded below by a positive
constant independent of $k$. The lemma follows immediately.

\begin{claim}
Fix $B \in \F_2^{(k-1)\times (k-1)}$. The following
hold:
\begin{enumerate}
\item If $A_{k-1} = B$ and $\rank(B) \leq k-3$, then
$\rank(A) < k$.
\item If $\rank(B) = k-1$ then
$\Pr_{A} [ \rank(A) = k | A_{k-1} = B ] \leq 1/2$.
\item If $\rank(B) = k-2$ then
$\Pr_{A} [ \rank(A) = k | A_{k-2} = B ] \leq 1/2$.
\end{enumerate}
\end{claim}

\begin{proof}
Note that for every $i$, we have $\rank(A_i) \leq \rank(A_{i-1}) + 2$
since $A_i$ may be obtained from $A_{i-1}$ by first adding a column
and then a row, and each step may increase the rank by at most $1$.
Part (1) follows immediately.

For part (2), fix $a_{k,1},\ldots,a_{k,k-1}$ and consider a random
choice of $a_{k,k}$. Since $A_{k-1}$ has full rank, there is a
unique linear combination of the
$k-1$ rows of $A_{k-1}$ that generates the row $\langle a_{k,1},
\ldots,a_{k,k-1} \rangle$. $A$ has full rank only if
$a_{k,k}$ does not
equal the same linear combination of $a_{1,k},\ldots,a_{k-1,k}$, and
the probability of this event is at
most $1/2$.

Finally for part (3), assume for notational simplicity that
$A_{k-2}$ has full rank and the $(k-1)$th row of $A$ is linearly
dependent on the first $k-2$ rows. Now consider the addition
of a $k$th row to $A_{k-1}$ consisting of $a_{k,1},\ldots,a_{k,k-1}$.
Note that a necessary condition for $A$ to have full rank is
that the newly added row is linearly independent of the first
$k-2$ rows of $A_{k-1}$ (otherwise, the rank of the first $k-1$
columns of $A$ is only $k-2$). But again (as in Part (2)), there is a
unique linear combination of the rows of $A_{k-2}$ that generates
the row $\langle a_{k,1},\ldots,a_{k,k-2} \rangle$. The probability
that $a_{k,k-1}$ equals this linear combination applied to the
$(k-1)$-th column of $A_{k-1}$ is at least $1/2$.
\end{proof}

\begin{claim}
$\Pr_A [ \rank(A_{k-1}) \leq k-3 ] \geq 2^{-6}$.
\end{claim}

\begin{proof}
We start with the subclaim that for every $\ell$, we have
$\Pr [ \rank(A_{\ell+1}) = \rank(A_{\ell}) | A_{\ell} ] =
2^{(\rank(A_{\ell}) - \ell - 1)}$.
To see this, let $I \subseteq [\ell]$ be such that $A_{\ell}$
restricted to rows in $I$ has full rank (and
so $|I| = \rank(A_{\ell})$).  Then $A_{\ell}$
restricted to rows and columns of $I$ also has full rank.  (All the rows not in $I$ are in the span of the rows that are in $I$, and thus, by symmetry, all columns not in $I$ are in the span of the columns in $I$.)
Fix $a_{\ell+1,j}$ for $j \in I$
and note that there is unique linear combination of the rows of $I$ in $A_{\ell}$
such that they yield $a_{\ell+1,j}$ for $j \in I$. This linear
combination determines a unique setting for the remaining
$\ell + 1 - |I|$ entries of the $(\ell+1)^{\rm{th}}$ row of $A_{\ell
+ 1}$,
if the the rank of $A_{\ell+1}$ is to equal the rank of $A_\ell$.
The probability of this unique setting occurring equals $2^{|I| -
\ell - 1}$.
The subclaim follows.

Now the claim follows easily.
Let $m$ be the smallest integer $\leq k-2$ such that
$\rank(A_{m}) \geq k-4$. If such an $m$ does not exist,
then $\rank(A_{k-1}) \leq k-3$. Otherwise, $m$ exists, $m \geq k-4$ and
$\rank(A_m) \leq k-3$.
Using the subclaim above, we have for every $\ell \in \{m,\ldots,k-2\}$,
it is the case that
$\Pr[\rank(A_{\ell+1}) = \rank(A_{\ell})| A_{\ell} ] \geq 2^{(k-4)- \ell - 1}$.
Combining the claims for $\ell \in [m,k-2]$ (and recalling that $m \geq k-4$),
we get $\Pr[\rank(A_{k-1}) \leq k-3 ] \geq 2^{-6}$.
\end{proof}

Given the claims above, the lemma follows immediately.
\end{proof}
\iffalse

\cite{AKKLR}

\cite{KaufR}

\cite{JPRZ}

\cite{ViolaWigderson}

\cite{KaufmanS} - linear invariant properties

\cite{???} =- some paper on query hierarchies

Some references on Gowers norms; and counterexamples
to inverse Gowers conjectures.

\fi

\end{document}